\DeclareMathOperator{\Span}{span}
\newcommand{\dd}{\mathop{}\!\mathrm{d}}
\theoremstyle{plain}
\newtheorem{theorem}{Theorem}[section]
\newtheorem{lemma}[theorem]{Lemma}
\newtheorem{proposition}[theorem]{Proposition}
\newtheorem{corollary}[theorem]{Corollary}
\newtheorem{remark}[theorem]{Remark}
\numberwithin{equation}{section}
\numberwithin{table}{section}
\numberwithin{figure}{section}
\title{Convergence analysis of collocation methods for computing periodic solutions of\\ retarded functional differential equations}
\author{
Alessia And\`o$^{1,2}$ and Dimitri Breda$^{1,3}$\\[.5em]
\small $^{1}$CDLab -- Computational Dynamics Laboratory\\[-.2em]
\small Department of Mathematics, Computer Science and Physics -- University of Udine\\[-.2em]
\small via delle scienze 206, 33100 Udine, Italy\\[.5em]
\small $^{2}$\texttt{ando.alessia@spes.uniud.it}\\[.5em]
\small $^{3}$\texttt{dimitri.breda@uniud.it}
}
\date{\today}
\begin{document}
\clearpage
\maketitle
\thispagestyle{empty}
\begin{abstract}
We analyze the convergence of piecewise collocation methods for computing periodic solutions of general retarded functional differential equations under the abstract framework recently developed in [S. Maset, {\it Numer. Math.} (2016) 133(3):525-555], [S. Maset, {\it SIAM J. Numer. Anal.} (2015) 53(6):2771--2793] and [S. Maset, {\it SIAM J. Numer. Anal.} (2015) 53(6):2794--2821]. We rigorously show that a reformulation as a boundary value problem requires a proper {\it infinite-dimensional} boundary periodic condition in order to be amenable to such analysis. In this regard, we also highlight the role of the {\it period} acting as an unknown parameter, which is critical since it is directly linked to the course of time. Finally, we prove that the {\it finite element method} is convergent, {\color{black}while we limit ourselves to commenting on the infeasibility of this approach as far as the {\it spectral element method} is concerned.}
\end{abstract}

\smallskip
\noindent {\bf{Keywords:}} retarded functional differential equations, periodic solutions, boundary value problems, collocation methods

\smallskip
\noindent{\bf{2010 Mathematics Subject Classification:}} 65L03, 65L10, 65L20, 65L60
\section{Introduction}
\label{s_introduction}
Periodic behaviors emerge quite often in the dynamical analysis of systems. Their importance is even greater when dealing with complex and realistic models portraying natural phenomena, such as, e.g., the evolution of epidemics or population dynamics. Some form of delay is usually intrinsic in their description, and this is definitely the case we are focused in.

While the subject of periodic solutions is well settled for ordinary differential equations as far as computation, continuation and bifurcation are considered (see, e.g., the package \texttt{MatCont} \cite{matcont} as a representative of the state-of-the-art), relevant theory and computational tools have not yet reached a full maturity for delay equations. Among the main references for delay differential equations is \texttt{DDE-Biftool} \cite{ddebiftool,engacm02}, where the computation of periodic solutions is based on the work \cite{elir00}, extending the classic piecewise orthogonal collation methods already used for the {\color{black} case of ordinary differential equations} (see, e.g., \cite{acr81,amr88}). But when it comes to dealing with more complicated systems, involving also renewal or Volterra integral and integro-differential equations, the lack is evident \cite{bdgsv16,bdls16}.

The present work was originally guided by the need to fill this gap, trying to extend the numerical collocation \cite{elir00} to {\it Renewal Equations (REs)}. Besides the basic aspects concerning implementation and computation, effort was initially devoted to providing sources from the literature for the analysis of the error and the relevant convergence. In realizing that even these sources are lacking or at least not general (see Section \ref{s_literature} below), we decided to tackle a full investigation starting from the basic case of {\it Retarded Functional Differential Equations (RFDEs)}, mainly inspired by the recent ``trilogy'' of papers \cite{mas15I,mas15II,mas15NM}, which deals with the numerical solution of {\it Boundary Value Problems (BVPs)}.

The outcome, to the best of the authors' knowledge, is the first rigorous and fully-detailed analysis of error and convergence of piecewise collocation methods for the computation of periodic solutions of general RFDEs. {\color{black}Let us anticipate that the proposed approach is based on collocating the derivative of the solution following \cite{mas15NM} and in view of extension to REs as discussed in Section \ref{s_validation_t}.}

\bigskip
In this introduction we start in Section \ref{s_boundary} by deriving two equivalent BVP formulations for general RFDEs in view of computing periodic solutions. A discussion of the relevant literature is presented in Section \ref{s_literature}. Aims, contributions and results of the analysis we propose are summarized in Section \ref{s_aims}. Finally, some notations on relevant function spaces are introduced and suitably discussed in Section \ref{s_notation}.

The rest of the paper is organized in three main parts, namely Section \ref{s_abstract}, dealing with the validation of the required {\it theoretical} assumptions; Section \ref{s_discretization} presenting the discretization and validating the required {\it numerical} assumptions; Section \ref{s_convergence} concerning the final {\it convergence analysis}. Eventually, some closing remarks are given in Section \ref{s_concluding}, while Appendix \ref{s_appendix} collects a bunch of results used in the proofs developed in the above mentioned main sections.
\subsection{Boundary value problems}
\label{s_boundary}
Let $d$ be a positive integer, $a,b\in\mathbb{R}$ with $a<b$ and $\mathbb{F}([a,b],\mathbb{R}^{d}):=\{f:[a,b]\rightarrow\mathbb{R}^{d}\}$.

\bigskip
Let us consider the RFDE
\begin{equation}\label{rfdes}
\mathtt{y}'(\mathtt{t})=G(\mathtt{y}_{\mathtt{t}}),
\end{equation}
where $G:\mathtt{Y}\rightarrow\mathbb{R}^{d}$ is a function defined on a {\it state space} $\mathtt{Y}\subseteq\mathbb{F}([-\tau,0],\mathbb{R}^{d})$ for $\tau>0$ a given maximum delay. As usual \cite{diekmann95,hale77}, the {\it state} $\mathtt{y}_{\mathtt{t}}\in\mathtt{Y}$ is defined as 
\begin{equation}\label{states}
\mathtt{y}_{\mathtt{t}}(\sigma):=\mathtt{y}(\mathtt{t}+\sigma),\quad\sigma\in[-\tau,0],
\end{equation}
and the time derivative in \eqref{rfdes} is intended from the right.

\bigskip
The goal is to compute a periodic solution of \eqref{rfdes}, assuming its existence. As this solution is unknown, so is its period, say $\omega>0$. To deal with this lack of {\color{black}information} one usually resorts to a scaling of time, see, e.g., \cite{elir00}. Although numerically convenient, this scaling plays an essential role in the analysis of convergence, a role that to the best of our knowledge has not received the deserved attention in the literature, possibly because not even the general form \eqref{rfdes} has been adequately considered (in favor of maybe more {\it practical} instances like $g(\mathtt{y}(\mathtt{t}),\mathtt{y}(\mathtt{t}-\tau))$ or similar ones). Let us then define {\color{black}$s_{\omega}:[-\tau,0]\rightarrow\mathbb{R}$} as $t=s_{\omega}(\mathtt{t}):=\mathtt{t}/\omega$, which transforms \eqref{rfdes} into
\begin{equation}\label{rfdet}
y'(t)=\omega G(y_{t}\circ s_{\omega})
\end{equation}
by $y(t):=\mathtt{y}(s_{\omega}^{-1}(t))=\mathtt{y}(\omega t)=\mathtt{y}(\mathtt{t})$. In particular, if $\mathtt{y}$ is an $\omega$-periodic solution of \eqref{rfdes}, correspondingly $y$ is a $1$-periodic solution of \eqref{rfdet} and vice versa. Recall that periodic solutions are defined on the whole line.

The state of \eqref{rfdet} should lie in $\mathbb{F}([-r,0],\mathbb{R}^{d})$ for $r:=s_{\omega}(\tau)=\tau/\omega$ unknown, so that it would change according to the concerned periodic solution of \eqref{rfdes}. To avoid this variability, we choose as a state space a set $Y\subseteq\mathbb{F}([-1,0],\mathbb{R}^{d})$, defining  $y_{t}\in Y$ as
\begin{equation}\label{statet}
y_{t}(\theta):=y(t+\theta),\quad\theta\in[-1,0].
\end{equation}
Indeed, if $\tau\leq\omega$ then $r\leq1$ and thus we deal with an enlarged state space. Otherwise, as long as $\tau$ is finite, we can always refer to a sufficiently large multiple of the period in order to fall into the previous case. Finally, with respect to \eqref{states},
\begin{equation}\label{somega}
\theta=s_{\omega}(\sigma)=\frac{\sigma}{\omega}
\end{equation}
for $\sigma\in[-\tau,0]$ as far as $\theta\in[-r,0]\subseteq[-1,0]$.
\begin{remark}
Let us anticipate that in case of numerical approximation through iterative methods requiring an initial guess of the solution (as is the case for numerical continuation, see below), if the initial guess of $\omega$ is less than or too close to $\tau$, then one can start from $k\omega$ with a suitable integer $k>1$.
\end{remark}

\bigskip
A periodic solution is usually characterized through a BVP, obtained by considering \eqref{rfdet} over one period, viz. $[0,1]$, together with a periodicity condition and a {\it phase} condition to remove translational invariance, see, e.g., \cite{elir00} again. In the case of RFDEs like \eqref{rfdet}, the evaluation of $y$ through $y_{t}$ in $G$ may regard time instants (or intervals) falling to the left of $[0,1]$. If so, one possibility is to exploit the implicitly assumed periodicity to bring the evaluation back to the desired domain. This corresponds to defining the {\color{black} {\it periodic extension} $\overline y:[-1,1]\to\mathbb{R}^d$ of $y:[0,1]\to\mathbb{R}^d$, and then the} {\it periodic state} $\overline{y_{t}}\in Y$ according to \eqref{statet}, i.e., for $t\in[0,1]$,
\begin{equation}\label{baryttheta}
\overline{y_{t}}(\theta):=\begin{cases}
y(t+\theta), & t+\theta\in[0,1], \\
y(t+\theta+1), & t+\theta\in[-1,0),
\end{cases}
\end{equation}
recalling that $\tau\leq\omega$, i.e., $\theta\in[-1,0]$. Note that in view of the fact that the right-hand side $G$ of \eqref{rfdes} acts properly on the original state space $\mathtt{Y}$, \eqref{somega} and \eqref{baryttheta} lead to considering
\begin{equation*}
\overline{y_{t}}\circ s_{\omega}(\sigma):=\begin{cases}
y(t+s_{\omega}(\sigma)), & t+s_{\omega}(\sigma)\in[0,1], \\
y(t+s_{\omega}(\sigma)+1), & t+s_{\omega}(\sigma)\in[-1,0),
\end{cases}
\end{equation*}
for $\sigma\in[-\tau,0]$. With the above device, the relevant BVP reads
\begin{equation}\label{bvp1}
\left\{\setlength\arraycolsep{0.1em}\begin{array}{ll}
y'(t)=\omega G(\overline{y_{t}}\circ s_{\omega}),&\quad t\in[0,1],\\[2mm]
y(0)=y(1)\\[2mm]
p(y)=0.
\end{array}\right.
\end{equation}
The solution $y$ of \eqref{bvp1} is intended as an element of a set $Y^{+}\subseteq\mathbb{F}([0,1],\mathbb{R}^{d})$. Moreover, $p:Y^{+}\rightarrow\mathbb{R}$ denotes the phase condition, which we assume to be linear, continuous and able to eliminate translational invariance. For example, a {\it trivial} phase condition is one of the form
$y_k(0)=\hat y$ for some $k\in\{1,\ldots,d\}$ and a given $\hat y\in\mathbb{R}$. An {\it integral} phase condition is one of the form
\begin{equation*}
\int_{0}^{1}y^{T}(t)\tilde{y}'(t)\dd t=0,
\end{equation*}
where $\tilde{y}$ is a given reference $1$-periodic solution. Either $\hat y$ or $\tilde y$ are available in the natural continuation framework where periodic solutions are usually computed \cite{doe07}: indeed, the former may be a coordinate of the equilibrium giving rise to a limit cycle through a Hopf bifurcation; the latter may be the periodic solution computed at the previous continuation step. Note that in \eqref{bvp1} the periodicity condition (i.e., the first of the boundary conditions) concerns only the values of the solution at the extrema of $[0,1]$ since the periodicity is included in the right-hand side through \eqref{baryttheta}. As such, it is a condition in $\mathbb{R}^{d}$.

\bigskip
Alternatively to \eqref{bvp1}, one can still consider a BVP for the original scaled equation \eqref{rfdet} by imposing the periodicity to the states at the extrema of the period, rather than to the solution values:
\begin{equation}\label{bvp2}
\left\{\setlength\arraycolsep{0.1em}\begin{array}{ll}
y'(t)=\omega G(y_{t}\circ s_{\omega}),&\quad t\in[0,1],\\[2mm]
y_{0}=y_{1}\\[2mm]
p(y\vert_{[0,1]})=0.
\end{array}\right.
\end{equation}
In this case the solution $y$ is intended as an element of a set $Y^{\pm}\subseteq\mathbb{F}([-1,1],\mathbb{R}^{d})$  and the periodicity condition concerns the state space $Y$.
\subsection{Literature}
\label{s_literature}
The literature on the numerical computation of periodic solutions of delay equations through relevant BVPs is rather rich (also for {\it neutral} and {\it state-dependent} problems). Let us suggest \cite[Section 1.1]{mas15NM} for a detailed account. By far most of the works concern formulation \eqref{bvp1} \cite{amr88,amr07,bad85,bk06,bel83,bel85,bz84,ed02,elir00,liu94,mas15I,mas15II,mas15NM,rt74}, while only few address formulation \eqref{bvp2} \cite{elr00,lelr97,vl05}. A short discussion on the two equivalent alternatives can be found in \cite[Section 2]{elir00}, where the name Halanay's BVP for \eqref{bvp2} is also recalled from \cite{km99}. Finally, let us note that very few papers deal with theoretical error and convergence analyses, e.g., \cite{bad85,ed02}. In particular, \cite{bad85} does not consider explicitly periodic problems or the presence of unknown parameters, while \cite{ed02} deals with linear problems and assumes the period to be known (and equal to $1$). For further references on these and other aspects see \cite{mas15I,mas15II,mas15NM}, which represent a thorough research on the subject and tackle the solution of BVPs as fixed point problems, furnishing a solid framework for the convergence analysis. The approach proposed in \cite{mas15NM} is quite abstract, while a more concrete collocation framework is illustrated in \cite{mas15I,mas15II}. However, the treatment is devoted to general BVPs, not necessarily restricted to the periodic case, which is never considered explicitly indeed.
\subsection{Aims, contributions and results}
\label{s_aims}
\hspace{-.01mm}{\color{black}The a}im of the present work is to develop a rigorous and fully-detailed analysis of error and convergence of piecewise collocation methods for the computation of periodic solutions of general RFDEs by following the abstract approach discussed in \cite{mas15NM}.

In the following sections we try to apply this general framework to both \eqref{bvp1} and \eqref{bvp2}. Note that the former formulation is the periodic instance of the  {\it side condition} considered in \cite{mas15NM} (page 526), while \eqref{bvp2} is not even mentioned therein. In spite of this, we show that only the latter is amenable of the treatment in \cite{mas15NM}, while the former fails to satisfy (some of) the required assumptions. Therefore, in what follows we give formal proofs only for \eqref{bvp2}, reserving to comment about \eqref{bvp1} up to the point in Section \ref{s_abstract} where it definitively fails to fit into \cite{mas15NM}\footnote{{\color{black}Let us remark that beyond the mentioned (technical) deficiencies, the authors are not aware of any numerical reasons for the failure of formulation \eqref{bvp1}, which is indeed the most widely used for simulations.}}.

\bigskip
Let us clarify that the contributions of this investigation are represented by the developments of proofs of the validity of the {\it theoretical} (Section \ref{s_abstract}) and {\it numerical} (Section \ref{s_discretization}) assumptions required to apply the abstract approach of \cite{mas15NM}, in the case of periodic BVPs. On the one hand, ``This task is far from trivial'' \cite[end of page 2791]{mas15I}. On the other hand, we soon anticipate that in the periodic case the period plays the role of an unknown parameter of the problem. Although unknown parameters are explicitly considered in \cite{mas15NM}, what is neglected therein is that the unknown period is linked to the course of time, and thus to the domain of the BVP. Exactly this fact is a cause of major troubles in the effort of validating the above assumptions. {\color{black}The hypotheses on the right-hand side and on the discretization under which such assumptions are validated are listed at the beginning of Section \ref{s_validation_t} and of Section \ref{s_validation_n}.}

\bigskip
The discretization considered in \cite{mas15NM} consists in the collocation of the derivative of the solution, being it devoted mainly to neutral problems. Here we keep on following this same technique even if we restrict our treatment to non-neutral equations. On the one hand, the adaptation to the periodic case is itself way far from being trivial. On the other hand, in view of our original motivation, exactly this strategy extends to the case of renewal equations by interpreting the derivative of the solution of a neutral RFDE as the solution of a corresponding renewal equation. As the analysis of the case of non-neutral RFDEs has revealed itself already complicated under this framework, we leave the extensions to neutral and renewal equations as the logical steps to be developed in the next future.

\bigskip
Concerning the method and its convergence, as the former is based on piecewise collocation (following the traditional practical approaches in both \texttt{MatCont} and \texttt{DDE-Biftool}), convergence can be potentially attained by either the {\it Finite Element Method (FEM)} or the {\it Spectral Element Method (SEM)}. It turns out that the framework of \cite{mas15NM} can be used to prove the convergence of the FEM, leading to the expected results about the order of convergence under suitable regularity assumptions. This is the main content of Section \ref{s_convergence}, namely Theorem \ref{t_epsL}. As for the SEM, although not used in practical implementations and therefore marginal to our primary interest, it is not yet clear if the current analysis can lead to prove convergence. A discussion on this aspect is contained in Section \ref{s_SEM}.

\subsection{Notations and function spaces}
\label{s_notation}
Prior to starting, let us fix some notations, mainly relevant to the choices of subsets of $\mathbb{F}([a,b],\mathbb{R}^{d})$ in view of \eqref{bvp1} and \eqref{bvp2}. In particular, we use $B^{\infty}$ in place of $\mathbb{F}$ for measurable and bounded functions and $B^{1,\infty}$ for continuous functions with measurable and bounded first derivative. Let us remark again that time derivatives are intended from the right. If $|\cdot|$ denotes a norm in finite-dimensional spaces and $\|f\|_{\infty}:=\sup_{t\in[a,b]}|f(t)|$ is the uniform norm, then $B^{\infty}([a,b],\mathbb{R}^{d})$ and $B^{1,\infty}([a,b],\mathbb{R}^{d})$ become Banach spaces respectively with 
\begin{equation}\label{normB1inf}
\|f\|_{B^{\infty}}:=\|f\|_{\infty},\qquad\|f\|_{B^{1,\infty}}:=\|f\|_{\infty}+\|f'\|_{\infty}.
\end{equation}
Occasionally, we may use also $C$ for continuous functions and $C^{1}$ for continuously differentiable ones, with $\|f\|_{C}=\|f\|_{\infty}$ and $\|f\|_{C^{1}}=\|f\|_{\infty}+\|f'\|_{\infty}$ again. Also other spaces will be temporarily introduced for a tentative analysis of \eqref{bvp1}, and in case of product spaces $U=U_{1}\times U_{2}$ we choose
\begin{equation}\label{normprod}
\|\cdot\|_{U}=\max\{\|\cdot\|_{U_{1}},\|\cdot\|_{U_{2}}\},
\end{equation}
which makes $U$ a Banach space if both $U_{1}$ and $U_{2}$ are. 

In addition, for $U,V$ normed spaces, according to \cite[Definition 1.1.5]{ampr95} we denote by $DA(u)\in\mathcal{L}(U,V)$ the Fr\'echet differential at $u\in U$ of a map $A:U\rightarrow V$, where $\mathcal{L}(U,V)$ is the set of linear bounded operators $U\rightarrow V$, equipped with the induced norm
\begin{equation}\label{normA}
\|A\|_{V\leftarrow U}=\sup_{u\in U\setminus\{0\}}\frac{\|Au\|_{V}}{\|u\|_{U}}.
\end{equation}
We denote also by $\mathcal{C}^{1}(U,V)$ the set of maps $A:U\rightarrow V$ which are continuously differentiable in the sense of Fr\'echet, i.e., their Fr\'echet derivative $DA$ is continuous as a map $U\rightarrow\mathcal{L}(U,V)$. Finally, for a Banach space $X$, $\overline{B}(x,r)$ denotes the closed ball of center $x\in X$ and radius $r>0$.

\bigskip
We close this introduction motivating the choices above about measurable and bounded functions instead of continuous ones, the latter being a (if not {\it the}) standard for RFDEs. Among the main reasons is the fact that, concerning formulation \eqref{bvp1}, for any $t\in[0,1)$ the map $\theta\mapsto\overline{y_{t}}(\theta)$ introduced in \eqref{baryttheta} is continuous if and only if $y(0)=y(1)$, a condition satisfied by solutions of \eqref{bvp1}. Otherwise, a jump discontinuity with jump $y(0)-y(1)$ appears at $\theta=-t$, in which we have continuity only from the right by virtue of \eqref{baryttheta}. Thus the classical choice $\mathtt{Y}=C([-\tau,0],\mathbb{R}^{d})$ would lead the right-hand side $G$ in \eqref{rfdes} to act outside its domain in those cases where the periodic boundary condition $y(0)=y(1)$ is not satisfied, due to the trick of recovering periodicity through \eqref{baryttheta}. In this respect, we anticipate indeed that in the following analysis of convergence such situations occur, either because boundary conditions other than the periodic one may be imposed (e.g., in Proposition \ref{p_Ax*2} below), or simply because we must deal with neighborhoods of the sought periodic solution, which by no means contain only functions satisfying the periodic boundary condition. Nevertheless, it is exactly this lack of continuity that leads to the inapplicability of the approach in \cite{mas15NM} as we show in Section \ref{s_validation_t}.

As far as formulation \eqref{bvp2} is concerned, instead, continuity is guaranteed by the usual definition of solution of RFDEs, given that an initial value problem is implicitly defined through (the yet unknown) $y_{0}$. Nevertheless, the problem illustrated above would arise for the first derivative. Indeed, the latter is not necessarily continuous at $0$ even if one chose to work with $\mathtt{Y}=C^{1}([-\tau,0],\mathbb{R}^{d})$, unless the extra condition $\psi'(0^{-})=G(\psi)$ were imposed to any $\psi\in\mathtt{Y}$. In Section \ref{s_validation_t} we show that the choice $B^{\infty}$ for the derivative is thus necessary, and that at the same time the lack of continuity for the latter is balanced by the use of right-hand derivatives with respect to time, as it is correct in the field of RFDEs.

Finally, let us remark that measurable and bounded functions can be used in the theory of RFDEs if one slightly weakens the notion of solution \cite[Section 0.2]{diekmann95}.
\section{The abstract approach towards fixed point problems}
\label{s_abstract}
We first summarize the main ingredients of the abstract approach proposed in \cite{mas15NM} to numerically treat BVPs for RFDEs, described therein for neutral problems. The backbone of the methodology consists in translating the BVP into a fixed point problem. In Section \ref{s_equivalent} we apply this translation to the two equivalent formulations \eqref{bvp1} and \eqref{bvp2}. In Section \ref{s_validation_t} we deal with the validation of the theoretical assumptions required in \cite{mas15NM} to use the framework developed therein for the relevant error and convergence analyses. As for the first formulation we show that it cannot satisfy the third of these assumptions (Proposition \ref{p_Ax*1} below), unless we restrict the relevant spaces by adding specific constraints. Nevertheless, these additional constraints immediately cause the failure of the fourth and last of these assumptions (Proposition \ref{p_Ax*2} below). As for the second formulation, instead, all the required theoretical assumptions can be satisfied under reasonable regularity hypotheses on $G$ in \eqref{rfdes}, and thus we give the relevant formal proofs. In particular, the last of these assumptions appears tricky to satisfy, to the point that its proof is among the main contributions of the present work. Eventually, the tools on which it relies are dealt with in a dedicated section, namely Section \ref{s_nongeneric}. 

\bigskip
The general BVP considered in \cite{mas15NM} has the form
\begin{equation*}
\begin{cases}
u=\mathcal{F}(\mathcal{G}(u,\alpha),u,\beta)\\
\mathcal{B}(\mathcal{G}(u,\alpha),u,\beta)=0.
\end{cases}
\end{equation*}
The first line represents the functional equation of neutral type, and the second line represents the boundary condition. $u$ is the derivative of the concerned solution $v$, the former living in a Banach space $\mathbb{U}\subseteq\mathbb{F}([a,b],\mathbb{R}^{d})$, the latter living in a normed space $\mathbb{V}\subseteq\mathbb{F}([a,b],\mathbb{R}^{d})$. The operator $\mathcal{G}:\mathbb{U}\times\mathbb{A}\rightarrow\mathbb{V}$ represents a (linear) Green operator which reconstructs the solution $v=\mathcal{G}(u,\alpha)$ given its derivative $u$ and a value $\alpha$ in a Banach space $\mathbb{A}$ containing the range of the solution; a classic example is
\begin{equation}\label{green}
\mathcal{G}(u,\alpha)(t):=\alpha+\int_{c}^{t}u(s)\dd s,\quad t\in[a,b],
\end{equation}
for some $c\in[a,b]$, in which case $\alpha=v(c)$. $\beta$ is a vector of possible parameters, usually varying together with the solution and living in a Banach space $\mathbb{B}$. The function $\mathcal{F}:\mathbb{V}\times\mathbb{U}\times\mathbb{B}\rightarrow\mathbb{U}$ is the right-hand side of the concerned equation while $\mathcal{B}:\mathbb{V}\times\mathbb{U}\times\mathbb{B}\rightarrow\mathbb{A}\times\mathbb{B}$ represents the boundary condition. The latter usually includes a proper boundary condition on the solution (the component in $\mathbb{A}$) and a further condition posing the necessary constraints on the parameters (the component in $\mathbb{B}$) .

\bigskip
Eventually, in \cite{mas15NM}, the so-called {\it Problem in Abstract Form} (PAF) consists in finding $(v^{\ast},\beta^{\ast})\in \mathbb{V}\times\mathbb{B}$ with $v^{\ast}:=\mathcal{G}(u^{\ast},\alpha^{\ast})$ and $(u^{\ast},\alpha^{\ast},\beta^{\ast})\in \mathbb{U}\times\mathbb{A}\times\mathbb{B}$ such that 
\begin{equation}\label{PAF}
(u^{\ast},\alpha^{\ast},\beta^{\ast})=\Phi(u^{\ast},\alpha^{\ast},\beta^{\ast})
\end{equation}
for $\Phi:\mathbb{U}\times\mathbb{A}\times\mathbb{B}\rightarrow\mathbb{U}\times\mathbb{A}\times\mathbb{B}$ given by
\begin{equation}\label{Phi}
\Phi(u,\alpha,\beta):=
\begin{pmatrix}
\mathcal{F}(\mathcal{G}(u,\alpha),u,\beta)\\[2mm]
(\alpha,\beta)-\mathcal{B}(\mathcal{G}(u,\alpha),u,\beta)
\end{pmatrix}.
\end{equation}
In what follows we always use the superscript $^{\ast}$ to denote quantities relevant to fixed points.
\subsection{Equivalent formulations}
\label{s_equivalent}
Let us start with formulation \eqref{bvp1}. In this case the domain of the BVP is $[a,b]=[0,1]$. We choose $\mathbb{U}=\mathbb{U}_{1}$ and $\mathbb{V}=\mathbb{V}_{1}$ for $\mathbb{U}_{1},\mathbb{V}_{1}\subseteq Y^{+}$ and $Y^{+}$ as introduced in Section \ref{s_introduction}. We choose also $\mathbb{A}=\mathbb{A}_{1}=\mathbb{R}^{d}$. The only unknown parameter is the original period, therefore we fix $\mathbb{B}=\mathbb{B}_{1}=\mathbb{R}$ and use $\omega$ in place of $\beta$ once for all (recall anyway that the sought period $\omega^{\ast}$ is assumed to be positive). The Green operator $\mathcal{G}=\mathcal{G}_{1}$ is chosen as the operator $\mathcal{G}_{1}:\mathbb{U}_{1}\times\mathbb{A}_{1}\rightarrow Y^{+}$ with action similar to \eqref{green}; in particular we define
\begin{equation}\label{G1}
\mathcal{G}_{1}(u,\alpha)(t):=\alpha+\int_{0}^{t}u(s)\dd s,\quad t\in[0,1].
\end{equation}
Then the solutions of \eqref{bvp1} are exactly the pairs $(v^{\ast},\omega^{\ast})\in\mathbb{V}_{1}\times\mathbb{B}_{1}$ with $v^{\ast}:=\mathcal{G}_{1}(u^{\ast},\alpha^{\ast})$ and $(u^{\ast},\alpha^{\ast},\omega^{\ast})\in\mathbb{U}_{1}\times\mathbb{A}_{1}\times\mathbb{B}_{1}$ the fixed points of the map $\Phi_{1}:\mathbb{U}_{1}\times\mathbb{A}_{1}\times\mathbb{B}_{1}\rightarrow\mathbb{U}_{1}\times\mathbb{A}_{1}\times\mathbb{B}_{1}$ defined by
\begin{equation*}
\Phi_{1}(u,\alpha,\omega):=
\begin{pmatrix}
\omega G(\overline{\mathcal{G}_{1}(u,\alpha)_{\cdot}}\circ s_{\omega})\\[2mm]
\mathcal{G}_{1}(u,\alpha)(1)\\[2mm]
\omega-p(\mathcal{G}_{1}(u,\alpha))
\end{pmatrix}.
\end{equation*}
\noindent Above $\alpha$ plays the role of $v(0)$, and $v_{\cdot}$ denotes the map $t\mapsto v_{t}$ according to \eqref{statet}, about which we recall also \eqref{baryttheta} and the comments closing Section \ref{s_notation}. With the above choices it follows that \eqref{bvp1} leads to an instance of \eqref{Phi} with $\mathcal{F}=\mathcal{F}_{1}:\mathbb{V}_{1}\times\mathbb{U}_{1}\times\mathbb{B}_{1}\rightarrow\mathbb{U}_{1}$ and $\mathcal{B}=\mathcal{B}_{1}:\mathbb{V}_{1}\times\mathbb{U}_{1}\times\mathbb{B}_{1}\rightarrow\mathbb{A}_{1}\times\mathbb{B}_{1}$ given respectively by
\begin{equation*}
\mathcal{F}_{1}(v,u,\omega):=\omega G(\overline{v_{\cdot}}\circ s_{\omega}),\qquad\mathcal{B}_{1}(v,u,\omega):=
\begin{pmatrix}
v(0)-v(1)\\[2mm]
p(v)
\end{pmatrix}.
\end{equation*}
Note that in our case the problem is not neutral. Moreover, the boundary operator is linear and includes both the periodicity and the phase conditions, none of which depend on $\omega$.

\bigskip
Now let us consider \eqref{bvp2}. The domain of the BVP is again $[a,b]=[0,1]$, but in this case we choose $\mathbb{U}=\mathbb{U}_{2}\subseteq Y^{+}$, $\mathbb{V}=\mathbb{V}_{2}\subseteq Y^{\pm}$ and $\mathbb{A}=\mathbb{A}_{2}\subseteq Y$ for $Y$, $Y^{+}$ and $Y^{\pm}$ as introduced in Section \ref{s_introduction}, as well as $\mathbb{B}=\mathbb{B}_{2}=\mathbb{R}$. Let us remark that in \cite{mas15NM} the treatment is restricted to the case where $\mathbb{A}$ is finite-dimensional, so that this alternative formulation brings in this novelty explicitly. Accordingly, we define the Green operator $\mathcal{G}=\mathcal{G}_{2}$ as the operator $\mathcal{G}_{2}:\mathbb{U}_{2}\times\mathbb{A}_{2}\rightarrow Y^{\pm}$ given by
\begin{equation}\label{G2}
\mathcal{G}_{2}(u,\psi)(t):=\begin{cases}
\displaystyle\psi(0)+\int_{0}^{t}u(s)\dd s,&t\in[0,1],\\[2mm]
\psi(t),&t\in[-1,0].
\end{cases}
\end{equation}
Note that $\mathcal{G}_{2}$ corresponds to the operator $V$ first introduced in \cite{bmvsinum12}. Then the solutions of \eqref{bvp2} are exactly the pairs $(v^{\ast},\omega^{\ast})\in\mathbb{V}_{2}\times\mathbb{B}_{2}$ with $v^{\ast}:=\mathcal{G}_{2}(u^{\ast},\psi^{\ast})$ and $(u^{\ast},\psi^{\ast},\omega^{\ast})\in\mathbb{U}_{2}\times\mathbb{A}_{2}\times\mathbb{B}_{2}$ the fixed points of the map $\Phi_{2}:\mathbb{U}_{2}\times\mathbb{A}_{2}\times\mathbb{B}_{2}\rightarrow\mathbb{U}_{2}\times\mathbb{A}_{2}\times\mathbb{B}_{2}$ defined by
\begin{equation}\label{Phi2}
\Phi_{2}(u,\psi,\omega):=
\begin{pmatrix}
\omega G(\mathcal{G}_{2}(u,\psi)_{\cdot}\circ s_{\omega})\\[2mm]
\mathcal{G}_{2}(u,\psi)_{1}\\[2mm]
\omega-p(\mathcal{G}_{2}(u,\psi)\vert_{[0,1]})
\end{pmatrix}.
\end{equation}
\noindent Above $\psi$ plays the role of $v_{0}$. With these choices it follows that \eqref{bvp2} leads to an instance of \eqref{Phi} with $\mathcal{F}=\mathcal{F}_{2}:\mathbb{V}_{2}\times\mathbb{U}_{2}\times\mathbb{B}_{2}\rightarrow\mathbb{U}_{2}$ and $\mathcal{B}=\mathcal{B}_{2}:\mathbb{V}_{2}\times\mathbb{U}_{2}\times\mathbb{B}_{2}\rightarrow\mathbb{A}_{2}\times\mathbb{B}_{2}$ given respectively by
\begin{equation}\label{FB2}
\mathcal{F}_{2}(v,u,\omega):=\omega G(v_{\cdot}\circ s_{\omega}),\qquad\mathcal{B}_{2}(v,u,\omega):=
\begin{pmatrix}
v_{0}-v_{1}\\[2mm]
p(v\vert_{[0,1]})
\end{pmatrix}.
\end{equation}
Again, the boundary operator is linear and independent of either $u$ or $\omega$. Finally, note that with regards to the elements of $\mathbb{A}$ we slightly modified the notation with respect to the previous one for \eqref{bvp1}, since now they are states $\psi\in\mathbb{A}_{2}\subseteq Y$ rather than solution values $\alpha\in\mathbb{A}_{1}=\mathbb{R}^{d}$.
\subsection{Validation of the theoretical assumptions}
\label{s_validation_t}
Several theoretical assumptions are required in \cite{mas15NM} to apply the convergence framework proposed therein. We state them as propositions regarding the present context, furnishing proofs of their validity for formulation \eqref{bvp2} under specific choices of the concerned spaces (and their relevant norms as indicated in Section \ref{s_notation}) and regularity properties of the right-hand side $G$ in \eqref{rfdes}. For ease of reference throughout the text, we collect below the corresponding hypotheses\footnote{\color{black}See Section \ref{s_practical} for more \emph{practical} forms of $G$.}.
\begin{enumerate}[label=(T\arabic*),ref=(T\arabic*)]
\item\label{T1} $\mathtt{Y}=B^{\infty}([-\tau,0],\mathbb{R}^{d})$, $Y=B^{\infty}([-1,0],\mathbb{R}^{d})$.
\item\label{T2} $\mathbb{U}_{2}=B^{\infty}([0,1],\mathbb{R}^{d})$, $\mathbb{V}_{2}=B^{1,\infty}([-1,1],\mathbb{R}^{d})$, $\mathbb{A}_{2}=B^{1,\infty}([-1,0],\mathbb{R}^{d})$.
\item\label{T3} $G:\mathtt{Y}\rightarrow\mathbb{R}^{d}$ is Fr\'echet-differentiable at every $\mathtt{y}\in\mathtt{Y}$.
\item\label{T4} $G\in\mathcal{C}^{1}(\mathtt{Y},\mathbb{R}^{d})$ in the sense of Fr\'echet.
\item\label{T5} There exist $r>0$ and $\kappa\geq0$ such that
\begin{equation*}
\|DG(\mathtt{y})-DG(v^{\ast}_{t}\circ s_{\omega^{\ast}})\|_{\mathbb{R}^{d}\leftarrow\mathtt{Y}}\leq\kappa\|\mathtt{y}-v^{\ast}_{t}\circ s_{\omega^{\ast}}\|_{\mathtt{Y}}
\end{equation*}
for every $\mathtt{y}\in\overline B(v^{\ast}_{t}\circ s_{\omega^{\ast}},r)$, uniformly with respect to $t\in[0,1]$.
\end{enumerate}
\noindent As far as formulation \eqref{bvp1} is concerned, instead, we reserve just to comment on possible similar proofs as anticipated in Section \ref{s_introduction}.

Let us also remark that other assumptions required in \cite{mas15NM}, this time concerning numerical aspects, are dealt with in Section \ref{s_validation_n}, after the discretization scheme is presented.

\bigskip
The first theoretical assumption in \cite{mas15NM}, viz. Assumption A$\mathfrak{F}\mathfrak{B}$ (page 534), concerns the Fr\'echet-differentiability of the operators $\mathcal{F}$ and $\mathcal{B}$ appearing in \eqref{Phi}. The latter, given also the linearity of $p$, is linear in the second of \eqref{FB2}, hence Fr\'echet-differentiable. As for the former in the first of \eqref{FB2} we prove the following, where we underline that the derivative with respect to the period is intended from the right since the period affects the course of time in the domain of the state space through \eqref{somega} and derivatives with respect to time are defined from the right as already remarked (note that $s_{\omega}(\sigma)$ is increasing with respect to $\omega$).
\begin{proposition}\label{p_Afb}
Under \ref{T1}, \ref{T2} and \ref{T3}, $\mathcal{F}_{2}$ in the first of \eqref{FB2} is Fr\'echet-differentiable, from the right with respect to $\omega$, at every point $(\hat v,\hat u,\hat\omega)\in\mathbb{V}_{2}\times\mathbb{U}_{2}\times(0,+\infty)$ and
\begin{equation}\label{DF2}
D\mathcal{F}_{2}(\hat v,\hat u,\hat\omega)(v,u,\omega)=\mathfrak{L}_{2}(\cdot;\hat v,\hat\omega)v_{\cdot}\circ s_{\hat\omega}+\omega\mathfrak{M}_{2}(\cdot;\hat v,\hat\omega)
\end{equation}
for $(v,u,\omega)\in\mathbb{V}_{2}\times\mathbb{U}_{2}\times(0,+\infty)$, where, for $t\in[0,1]$,$(\hat v,\hat u,\hat\omega)\in\mathbb{V}_{2}\times\mathbb{U}_{2}\times(0,+\infty)$
\begin{equation}\label{L2}
\mathfrak{L}_{2}(t;v,\omega):=\omega DG(v_{t}\circ s_{\omega})
\end{equation}
and
\begin{equation}\label{M2}
\mathfrak{M}_{2}(t;v,\omega):=G(v_{t}\circ s_{\omega})-\mathfrak{L}_{2}(t;v,\omega)v'_{t}\circ s_{\omega}\cdot\frac{s_{\omega}}{\omega}.
\end{equation}
\end{proposition}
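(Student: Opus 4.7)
The plan is to compute the three partial (right) Fr\'echet derivatives of $\mathcal{F}_2(v, u, \omega) = \omega G(v_\cdot \circ s_\omega)$ at $(\hat v, \hat u, \hat\omega)$ and then assemble them into \eqref{DF2}. Since $\mathcal{F}_2$ does not depend on $u$, its $u$-partial vanishes. For the $v$-partial I would exploit linearity: the shift-and-rescale map $v \mapsto v_t \circ s_{\hat\omega}$ is bounded linear from $\mathbb{V}_2 = B^{1,\infty}([-1,1], \mathbb{R}^d)$ to $\mathtt{Y} = B^{\infty}([-\tau, 0], \mathbb{R}^d)$, with operator norm bounded uniformly in $t \in [0, 1]$. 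Composing with the Fr\'echet-differentiable $G$ (hypothesis \ref{T3}) and multiplying by the scalar $\hat\omega$ produces the first summand $\mathfrak{L}_2(\cdot; \hat v, \hat\omega) v_\cdot \circ s_{\hat\omega}$ of \eqref{DF2}, uniformly in $t$.

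For the one-sided $\omega$-partial, for $h \searrow 0$ and fixed $t \in [0, 1]$, I would write
\[
(\hat\omega+h) G(\hat v_t \circ s_{\hat\omega + h}) - \hat\omega G(\hat v_t \circ s_{\hat\omega}) = h\, G(\hat v_t \circ s_{\hat\omega}) + (\hat\omega + h)\bigl[G(\hat v_t \circ s_{\hat\omega + h}) - G(\hat v_t \circ s_{\hat\omega})\bigr].
\]
The first summand is already $h\, G(\hat v_t \circ s_{\hat\omega})$, which is the first half of $h\mathfrak{M}_2(t;\hat v,\hat\omega)$. For the bracket, the Fr\'echet expansion of $G$ at $\hat v_t \circ s_{\hat\omega}$ reduces matters to linearizing the increment $\hat v_t \circ s_{\hat\omega + h} - \hat v_t \circ s_{\hat\omega}$ in $h$. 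Using absolute continuity of $\hat v \in B^{1,\infty}$ I would write
\[
\hat v_t \circ s_{\hat\omega + h}(\sigma) - \hat v_t \circ s_{\hat\omega}(\sigma) = \int_{\sigma/\hat\omega}^{\sigma/(\hat\omega + h)} \hat v'(t + s) \dd s
\]
for $\sigma \in [-\tau, 0]$. Formal linearization at $h = 0$ yields $-h\, \hat v'_t \circ s_{\hat\omega}(\sigma) \cdot s_{\hat\omega}(\sigma)/\hat\omega$; applying $\hat\omega DG(\hat v_t \circ s_{\hat\omega}) = \mathfrak{L}_2(t; \hat v, \hat\omega)$ and replacing the prefactor $\hat\omega + h$ by $\hat\omega$ (at the cost of an $O(h^2)$ error) produces the remaining contribution $-h\, \mathfrak{L}_2(t;\hat v,\hat\omega)\hat v'_t \circ s_{\hat\omega} \cdot s_{\hat\omega}/\hat\omega$ of $h\mathfrak{M}_2(t;\hat v,\hat\omega)$.

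The main obstacle is upgrading this formal pointwise expansion to the Fr\'echet sense, namely producing an $o(h)$ remainder \emph{uniformly} in $t \in [0,1]$ and $\sigma \in [-\tau, 0]$, so that convergence holds in the target norm on $\mathbb{U}_2 = B^{\infty}([0,1], \mathbb{R}^d)$. Since $\hat v'$ is only measurable and bounded, a standard Taylor remainder for $\hat v$ is unavailable; one must instead combine the Lipschitz estimate $\|\hat v_t \circ s_{\hat\omega+h} - \hat v_t \circ s_{\hat\omega}\|_{\mathtt{Y}} \leq \|\hat v'\|_\infty\, \tau\, h/(\hat\omega(\hat\omega+h))$ with the boundedness of $DG$ in operator norm granted by \ref{T3}, to absorb the bulk of the error into the little-$o$ of $G$'s expansion. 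The right-sided convention on $\omega$ is essential at this point: for $\sigma \leq 0$, as $h \searrow 0$ the shifted point $\sigma/(\hat\omega+h)$ approaches $\sigma/\hat\omega$ from above, which is consistent with the right-derivative convention in time used throughout for RFDEs.

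Joint Fr\'echet differentiability then follows by triangulating the three one-variable expansions via the product-space norm \eqref{normprod}, while boundedness and continuity of the linear operator appearing in \eqref{DF2} come directly from \ref{T3} together with the uniform bounds supplied by $\hat v \in \mathbb{V}_2 = B^{1,\infty}$.
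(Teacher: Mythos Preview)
Your overall strategy---compute the $v$- and $\omega$-partials separately and then assemble them---differs from the paper's, which expands the \emph{joint} increment
\[
\mathcal{F}_{2}(\hat v+v,\hat u+u,\hat\omega+\omega)-\mathcal{F}_{2}(\hat v,\hat u,\hat\omega)
\]
in one shot. The paper applies the Fr\'echet expansion of $G$ at the fixed base state $\hat v_{t}\circ s_{\hat\omega}$ to the single combined perturbation $\xi^{t}:=(\hat v+v)_{t}\circ s_{\hat\omega+\omega}-\hat v_{t}\circ s_{\hat\omega}$, linearizes $\xi^{t}$ in both $v$ and $\omega$ simultaneously via the right derivative of $\hat v$, and then checks that after subtracting the claimed linear part one is left with $o(\omega+\|v\|_{\mathbb{V}_{2}})+O(\omega\cdot\|v\|_{\mathbb{V}_{2}})$.

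The gap in your route is the final ``triangulation'' step. Existence of each partial Fr\'echet derivative does \emph{not} by itself yield joint Fr\'echet differentiability; the standard sufficient condition asks for continuity of the partials in the base point, which here would mean continuity of $DG$---i.e.\ \ref{T4}, not merely \ref{T3}. Concretely, if you split the joint increment as
\[
[\mathcal{F}_{2}(\hat v+v,\hat\omega+\omega)-\mathcal{F}_{2}(\hat v+v,\hat\omega)]+[\mathcal{F}_{2}(\hat v+v,\hat\omega)-\mathcal{F}_{2}(\hat v,\hat\omega)],
\]
the first bracket is the $\omega$-increment at the \emph{shifted} base point $\hat v+v$, so its linearization produces $DG((\hat v+v)_{t}\circ s_{\hat\omega})$ rather than $DG(\hat v_{t}\circ s_{\hat\omega})$; bounding the discrepancy uniformly in $t$ is exactly a continuity-of-$DG$ statement you do not have. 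The paper's single expansion at the fixed base state sidesteps this and is what allows the argument to go through under \ref{T3} alone. Your integral representation of $\hat v_t\circ s_{\hat\omega+h}-\hat v_t\circ s_{\hat\omega}$ and your care about the right-sided convention are both on target and can be used inside the paper's joint-increment framework in place of its appeal to Taylor with Peano remainder.
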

\begin{proof}
According to \cite[Definition 1.1.1]{ampr95}, let us directly prove that for $D\mathcal{F}_{2}$ in \eqref{DF2} through \eqref{L2} and \eqref{M2} we get, for $\omega>0$,
\begin{equation}\label{DF20}
\setlength\arraycolsep{0.1em}\begin{array}{rcl}
\|\mathcal{F}_{2}(\hat v+v,\hat u+u,\hat\omega+\omega)&-&\mathcal{F}_{2}(\hat v,\hat u,\hat\omega)-D\mathcal{F}_{2}(\hat v,\hat u,\hat\omega)(v,u,\omega)\|_{\mathbb{U}_{2}}\\[2mm]
&=&o\left(\|(v,u,\omega)\|_{\mathbb{V}_{2}\times\mathbb{U}_{2}\times\mathbb{B}_{2}}\right).
\end{array}
\end{equation}
As for the left-hand side, by using \eqref{FB2}, the choice of $\mathbb{U}_{2}$ in \ref{T2} leads to evaluate
\begin{equation}\label{DF21}
\setlength\arraycolsep{0.1em}\begin{array}{rcl}
(\hat\omega+\omega)G((\hat v&+&v)_{t}\circ s_{\hat\omega+\omega})-\hat\omega G(\hat v_{t}\circ s_{\hat\omega})-\hat\omega DG(\hat v_{t}\circ s_{\hat\omega})v_{t}\circ s_{\hat\omega}\\[2mm]
&&-\omega G(\hat v_{t}\circ s_{\hat\omega})+\omega DG(\hat v_{t}\circ s_{\hat\omega})\hat v_{t}'\circ s_{\hat\omega}\cdot s_{\hat\omega}\\[2mm]
&=&(\hat\omega+\omega)[G((\hat v+v)_{t}\circ s_{\hat\omega+\omega})-G(\hat v_{t}\circ s_{\hat\omega})]\\[2mm]
&&-\hat\omega DG(\hat v_{t}\circ s_{\hat\omega})v_{t}\circ s_{\hat\omega}+\omega DG(\hat v_{t}\circ s_{\hat\omega})\hat v_{t}'\circ s_{\hat\omega}\cdot s_{\hat\omega}
\end{array}
\end{equation}
for $t\in[0,1]$. \ref{T3} allows to write
\begin{equation}\label{DF22}
G((\hat v+v)_{t}\circ s_{\hat\omega+\omega})-G(\hat v_{t}\circ s_{\hat\omega})=DG(\hat v_{t}\circ s_{\hat\omega})\xi^{t}+o(\|\xi^{t}\|_{\mathtt{Y}})
\end{equation}
for $\xi^{t}:=(\hat v+v)_{t}\circ s_{\hat\omega+\omega}-\hat v_{t}\circ s_{\hat\omega}$, see, e.g., \cite[(ii) on page 10]{ampr95}. So we are led to consider $\xi^{t}(\sigma)$ for every $\sigma\in[-\tau,0]$ given the choice of $\mathtt{Y}$ in \ref{T1}. Then \eqref{states} gives
\begin{equation}\label{DF23}
\setlength\arraycolsep{0.1em}\begin{array}{rcl}
\xi^{t}(\sigma)&=&\hat v(t+s_{\hat\omega+\omega}(\sigma))-\hat v(t+s_{\hat\omega}(\sigma))+v(t+s_{\hat\omega+\omega}(\sigma))\\[2mm]
&=&\hat v'(t+s_{\hat\omega}(\sigma))\eta(\sigma)+o(|\eta(\sigma)|)+v(t+s_{\hat\omega+\omega}(\sigma))
\end{array}
\end{equation}
for $\eta(\sigma):=s_{\hat\omega+\omega}(\sigma)-s_{\hat\omega}(\sigma)$, where we applied Taylor's theorem with Peano's reminder to $\hat v$ thanks to the choice of $\mathbb{V}_{2}$ in \ref{T2}. Since
\begin{equation}\label{eta}
\eta(\sigma)=\frac{\sigma}{\hat\omega+\omega}-\frac{\sigma}{\hat\omega}=-s_{\hat\omega}(\sigma)\cdot\frac{\omega}{\hat\omega+\omega}>0
\end{equation}
follows from \eqref{somega}, substitution into \eqref{DF23} leads to
\begin{equation*}
\xi^{t}=-\hat v_{t}'\circ s_{\hat\omega}\cdot s_{\hat\omega}\cdot\frac{\omega}{\hat\omega+\omega}+v_{t}\circ s_{\hat\omega+\omega}+o(\omega)
\end{equation*}
with $\|\xi^{t}\|_{\mathtt{Y}}=O(\omega+\|v\|_{\mathbb{V}_{2}})$. Substitution first into \eqref{DF22} and then into \eqref{DF21} leads to
\begin{equation*}
\setlength\arraycolsep{0.1em}\begin{array}{rcl}
(\hat\omega+\omega)G((\hat v&+&v)_{t}\circ s_{\hat\omega+\omega})-\hat\omega G(\hat v_{t}\circ s_{\hat\omega})-\hat\omega DG(\hat v_{t}\circ s_{\hat\omega})v_{t}\circ s_{\hat\omega}\\[2mm]
&&-\omega G(\hat v_{t}\circ s_{\hat\omega})+\omega DG(\hat v_{t}\circ s_{\hat\omega})\hat v_{t}'\circ s_{\hat\omega}\cdot s_{\hat\omega}\\[2mm]
&=&\displaystyle(\hat\omega+\omega)DG(\hat v_{t}\circ s_{\hat\omega})\left(-\hat v_{t}'\circ s_{\hat\omega}\cdot s_{\hat\omega}\cdot\frac{\omega}{\hat\omega+\omega}+v_{t}\circ s_{\hat\omega}\right)\\[2mm]
&&+o(\omega+\|v\|_{\mathbb{V}_{2}})-\hat\omega DG(\hat v_{t}\circ s_{\hat\omega})v_{t}\circ s_{\hat\omega}+\omega DG(\hat v_{t}\circ s_{\hat\omega})\hat v_{t}'\circ s_{\hat\omega}\cdot s_{\hat\omega}\\[2mm]
&=&o(\omega+\|v\|_{\mathbb{V}_{2}})+O(\omega\cdot\|v\|_{\mathbb{V}_{2}}).
\end{array}
\end{equation*}
The thesis now follows since $\|(v,u,\omega)\|_{\mathbb{V}_{2}\times\mathbb{U}_{2}\times\mathbb{B}_{2}}=\max\{\|v\|_{\mathbb{V}_{2}},\|u\|_{\mathbb{U}_{2}},|\omega|\}$ holds by \eqref{normprod}.
\end{proof}
\noindent As far as formulation \eqref{bvp1} is concerned, one can try to follow the proof given above for \eqref{bvp2} to eventually realize that the key step is \eqref{DF23}, where the application of Taylor's theorem is subject to the differentiability of $\overline{v_{t}}$. The latter is not guaranteed due to \eqref{baryttheta}, recall the relevant comments at the end of Section \ref{s_notation}. Yet it is still possible to obtain the result under the similar hypothesis \ref{T2} since we consider derivatives with respect to time only from the right and $\eta(\sigma)$ is indeed positive in \eqref{eta}.

\bigskip
The second theoretical assumption in \cite{mas15NM}, viz. Assumption A$\mathfrak{G}$ (page 534), concerns the boundedness of the {\color{black}Green} operator $\mathcal{G}$ appearing in \eqref{Phi}. 
\begin{proposition}\label{p_G}
Under \ref{T2}, $\mathcal{G}_{2}$ defined in \eqref{G2} is bounded.
\end{proposition}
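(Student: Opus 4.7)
The plan is to observe first that $\mathcal{G}_2$ is linear in $(u,\psi)$, so by \cite[Section 1.1]{ampr95} or standard functional analysis, boundedness in the operator sense amounts to producing a constant $C>0$ with $\|\mathcal{G}_2(u,\psi)\|_{\mathbb{V}_2} \leq C\,\|(u,\psi)\|_{\mathbb{U}_2 \times \mathbb{A}_2}$ for every $(u,\psi)\in\mathbb{U}_2\times\mathbb{A}_2$. Since $\mathbb{V}_2 = B^{1,\infty}([-1,1],\mathbb{R}^d)$ with the norm in \eqref{normB1inf}, I need to estimate both $\|\mathcal{G}_2(u,\psi)\|_{\infty}$ and $\|\mathcal{G}_2(u,\psi)'\|_{\infty}$ on $[-1,1]$.

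For the uniform norm I would split the supremum over $t\in[-1,1]$ into the two branches of \eqref{G2}. On $[-1,0]$, $\mathcal{G}_2(u,\psi)(t)=\psi(t)$ is bounded by $\|\psi\|_\infty\le\|\psi\|_{\mathbb{A}_2}$. On $[0,1]$, the triangle inequality applied to $\psi(0)+\int_0^t u(s)\,\mathrm{d}s$ and the fact that the Lebesgue integral of an essentially bounded function is bounded by the length of the interval times the essential sup give $|\mathcal{G}_2(u,\psi)(t)|\le\|\psi\|_\infty+\|u\|_\infty$. For the derivative (intended from the right, as emphasised in Section \ref{s_notation}), differentiating \eqref{G2} yields $\mathcal{G}_2(u,\psi)'(t)=u(t)$ for $t\in[0,1]$ and $\mathcal{G}_2(u,\psi)'(t)=\psi'(t)$ for $t\in[-1,0]$, each branch being measurable and bounded by $\|u\|_\infty$ or $\|\psi'\|_\infty$ respectively. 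Continuity of $\mathcal{G}_2(u,\psi)$ at the junction $t=0$ follows because both branches agree in value there (namely $\psi(0)$), which is needed so that $\mathcal{G}_2(u,\psi)\in\mathbb{V}_2$ in the first place.

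Summing the two contributions gives a bound of the form $\|\mathcal{G}_2(u,\psi)\|_{\mathbb{V}_2}\le 2\|u\|_{\mathbb{U}_2}+\|\psi\|_{\mathbb{A}_2}$, which by \eqref{normprod} is at most $3\,\|(u,\psi)\|_{\mathbb{U}_2\times\mathbb{A}_2}$; hence $\mathcal{G}_2\in\mathcal{L}(\mathbb{U}_2\times\mathbb{A}_2,\mathbb{V}_2)$ with norm at most $3$, which establishes the claim. There is no real obstacle in this proposition: the only point that merits attention is checking that the chosen spaces in \ref{T2} are in fact matched correctly, i.e.\ that $\mathbb{A}_2=B^{1,\infty}([-1,0],\mathbb{R}^d)$ is strong enough to control the derivative on $[-1,0]$ (it is, by definition of the $B^{1,\infty}$-norm in \eqref{normB1inf}), so the boundedness would fail if one used, e.g., $\mathbb{A}_2=B^\infty$ in place of $B^{1,\infty}$.
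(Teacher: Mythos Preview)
Your proof is correct and follows essentially the same approach as the paper's: both split $\mathcal{G}_2(u,\psi)$ into its two branches from \eqref{G2}, bound the function and its derivative separately on $[-1,0]$ and $[0,1]$, and arrive at the operator bound $\|\mathcal{G}_2\|_{\mathbb{V}_2\leftarrow\mathbb{U}_2\times\mathbb{A}_2}\le 3$. Your version is slightly more explicit about the junction at $t=0$ and about why the $B^{1,\infty}$-norm on $\mathbb{A}_2$ is required, but the argument is the same.
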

\begin{proof}
Following \eqref{normA}, we have that
\begin{equation*}
\setlength\arraycolsep{0.1em}\begin{array}{rcl}
\displaystyle\frac{\|\mathcal{G}_{2}(u,\psi)\|_{\mathbb{V}_{2}}}{\|(u,\psi)\|_{\mathbb{U}_{2}\times\mathbb{A}_{2}}}&=&\displaystyle\frac{\max\{\|\psi(0)+\int_0^{\cdot}u(s) \dd s\|_{\infty}+\|u\|_{\infty},\|\psi\|_{\mathbb{A}_{2}}\}}{\max\{\|u\|_{\mathbb{U}_{2}},\|\psi\|_{\mathbb{A}_{2}}\}}\\[4mm]
&\leq&\displaystyle\frac{\max\{\|\psi\|_{\infty}+\|u\|_{\infty}+\|u\|_{\infty},\|\psi\|_{\infty}\}}{\max\{\|u\|_{\infty},\|\psi\|_{\infty}\}},
\end{array}
\end{equation*}
holds for all nontrivial $(u,\psi)\in\mathbb{U}_{2}\times\mathbb{A}_{2}$. Then $\|\mathcal{G}_{2}\|_{\mathbb{V}_{2}\leftarrow\mathbb{U}_{2}\times\mathbb{A}_{2}}\leq 3$ easily follows.
\end{proof}
\noindent Note, however, that the PAF requires the range of $\mathcal{G}$ to lie in $\mathbb{V}$ for the fixed point problem to be well-posed: indeed, $\mathcal{G}$ provides the first argument to $\mathcal{F}$, recall \eqref{Phi}. In this respect, it is not difficult to see that $\mathcal{G}_{2}$ verifies this requirement under \ref{T2} and by considering that derivatives with respect to time are always from the right (otherwise there would be lack of differentiability at $0$). 

As far as formulation \eqref{bvp1} is concerned, under the similar hypothesis \ref{T2}, one similarly obtains
\begin{equation*}
\displaystyle\frac{\|\mathcal{G}_1(u,\alpha)\|_{\mathbb{V}_1}}{\|(u,\alpha)\|_{\mathbb{U}_1\times\mathbb{A}_1}}=\frac{\|\alpha+\int_0^{\cdot}u(s) \dd s\|_{\infty}+\|u\|_{\infty}}{\max\{\|u\|_{\mathbb{U}_1},\|\alpha\|_{\mathbb{A}_1}\}}\leq\frac{|\alpha|+\|u\|_{\infty}+\|u\|_{\infty}}{\max\{\|u\|_{\infty},|\alpha|\}},
\end{equation*}
for all nontrivial $(u,\alpha)\in\mathbb{U}_{1}\times\mathbb{A}_{1}$ and $\mathcal{G}_{1}$ in \eqref{G1}.

Since $\mathcal{G}$ is linear, it is also Fr\'echet-differentiable. Consequently, Proposition \ref{p_Afb} guarantees the Fr\'echet-differentiability of the fixed point operator \eqref{Phi2} as stated next.
\begin{corollary}\label{c_DPhi2}
Under \ref{T1}, \ref{T2} and \ref{T3}, $\Phi_{2}$ in \eqref{Phi2} is Fr\'echet-differentiable, from the right with respect to $\omega$, at every $(u,\psi,\omega)\in\mathbb{U}_{2}\times\mathbb{A}_{2}\times(0,+\infty)$ and
\begin{equation*}
D\Phi_{2}(\hat u,\hat\psi,\hat\omega)(u,\psi,\omega)=\begin{pmatrix}
\mathfrak{L}_{2}(\cdot;\mathcal{G}_{2}(\hat u,\hat\psi),\hat\omega)\mathcal{G}_{2}(u,\psi)_{\cdot}\circ s_{\hat\omega}+\omega\mathfrak{M}_{2}(\cdot;\mathcal{G}_{2}(\hat u,\hat\psi),\hat\omega)\\[2mm]
\mathcal{G}_{2}(u,\psi)_{1}\\[2mm]
\omega-p(\mathcal{G}_{2}(u,\psi)\vert_{[0,1]})
\end{pmatrix}
\end{equation*}
for $(u,\psi,\omega)\in\mathbb{U}_{2}\times\mathbb{A}_{2}\times(0,+\infty)$, $\mathfrak{L}_{2}$ in \eqref{L2} and $\mathfrak{M}_{2}$ in \eqref{M2}.
\end{corollary}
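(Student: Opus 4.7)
The plan is to derive the formula by applying the chain rule, recognizing that each of the three components of $\Phi_{2}(u,\psi,\omega)$ in \eqref{Phi2} is a composition of maps whose Fr\'echet differentials have already been identified. Indeed, the very definition \eqref{G2} shows that $\mathcal{G}_{2}$ is linear, and by Proposition \ref{p_G} it belongs to $\mathcal{L}(\mathbb{U}_{2}\times\mathbb{A}_{2},\mathbb{V}_{2})$, so that $\mathcal{G}_{2}$ is its own Fr\'echet derivative at every point, i.e.\ $D\mathcal{G}_{2}(\hat u,\hat\psi)(u,\psi)=\mathcal{G}_{2}(u,\psi)$. Likewise, the phase functional $p$, the state-at-time-$1$ evaluation $v\mapsto v_{1}$ and the restriction $v\mapsto v\vert_{[0,1]}$ are linear and bounded under \ref{T2}, hence coincide with their own Fr\'echet derivatives.

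For the first component I would rewrite $(u,\psi,\omega)\mapsto\omega G(\mathcal{G}_{2}(u,\psi)_{\cdot}\circ s_{\omega})$ as $\mathcal{F}_{2}\circ\Lambda$, where $\Lambda\colon\mathbb{U}_{2}\times\mathbb{A}_{2}\times\mathbb{B}_{2}\to\mathbb{V}_{2}\times\mathbb{U}_{2}\times\mathbb{B}_{2}$ is the bounded linear map $\Lambda(u,\psi,\omega):=(\mathcal{G}_{2}(u,\psi),u,\omega)$. By the previous paragraph $D\Lambda(\hat u,\hat\psi,\hat\omega)(u,\psi,\omega)=(\mathcal{G}_{2}(u,\psi),u,\omega)$. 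Proposition \ref{p_Afb} gives the Fr\'echet derivative of $\mathcal{F}_{2}$ at the point $(\mathcal{G}_{2}(\hat u,\hat\psi),\hat u,\hat\omega)$, with $\hat\omega>0$, in the form \eqref{DF2}. Substituting $\hat v=\mathcal{G}_{2}(\hat u,\hat\psi)$ and $v=\mathcal{G}_{2}(u,\psi)$ and invoking the chain rule yields exactly the first row of the matrix asserted in the statement, namely
\begin{equation*}
\mathfrak{L}_{2}(\cdot;\mathcal{G}_{2}(\hat u,\hat\psi),\hat\omega)\,\mathcal{G}_{2}(u,\psi)_{\cdot}\circ s_{\hat\omega}+\omega\,\mathfrak{M}_{2}(\cdot;\mathcal{G}_{2}(\hat u,\hat\psi),\hat\omega).
\end{equation*}
The second and third rows are immediate: $(u,\psi)\mapsto\mathcal{G}_{2}(u,\psi)_{1}$ is the composition of $\mathcal{G}_{2}$ with a linear bounded evaluation, hence linear and equal to its own derivative; and $(u,\psi,\omega)\mapsto\omega-p(\mathcal{G}_{2}(u,\psi)\vert_{[0,1]})$ is linear in all three arguments by linearity of $p$ and $\mathcal{G}_{2}$, hence again its own derivative.

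The only subtlety to keep track of is the one-sided character of the $\omega$-derivative. Since the last two components depend on $\omega$ only through the identity, they are plainly Fr\'echet-differentiable. The nontrivial $\omega$-dependence sits inside $\mathcal{F}_{2}$ in the first component and, by Proposition \ref{p_Afb}, there the differentiability is only from the right (with $\hat\omega>0$ to keep the base point in the open half-line on which $s_{\omega}$ is well-defined and Proposition \ref{p_Afb} applies). Because $\Lambda$ is standard Fr\'echet-differentiable and preserves the $\omega$-component unchanged, the one-sided character propagates cleanly through the chain rule and produces the stated right Fr\'echet derivative of $\Phi_{2}$; no further obstacle is expected beyond carefully tracking this asymmetry.
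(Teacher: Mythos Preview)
Your proof is correct and follows essentially the same approach as the paper's: the paper simply observes that the only nonlinear component of $\Phi_{2}$ is the first one and invokes Proposition~\ref{p_Afb} directly, while you spell out the chain-rule mechanics (via the linear map $\Lambda$) and the linearity of the remaining components more explicitly. The extra care you take in tracking the one-sided $\omega$-derivative through the composition is sound and makes the argument self-contained.
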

\begin{proof}
The only nonlinear component of $\Phi_{2}$ in \eqref{Phi2} is the first one, i.e., the one in $\mathbb{U}_{2}$ given by $\mathcal{F}_{2}$ in the first of \eqref{FB2}. The result is thus provided directly by Proposition \ref{p_Afb}.
\end{proof}
\noindent It is not difficult to argue that the same result holds also for formulation \eqref{bvp1}, given that the range of $\mathcal{G}_{1}$ is in $\mathbb{V}_{1}$ if we let $\mathbb{U}_{1}=B^{\infty}([0,1],\mathbb{R}^{d})$ and $\mathbb{V}_{1}=B^{1,\infty}([0,1],\mathbb{R}^{d})$ similarly to \ref{T2}.

\bigskip
The third theoretical assumption in \cite{mas15NM}, viz. Assumption A$x^{\ast}1$ (page 536), concerns the local Lipschitz continuity of the Fr\'echet derivative of the fixed point operator at the relevant fixed points. In this respect, let $(u^{\ast},\psi^{\ast},\omega^{\ast})\in\mathbb{U}_{2}\times\mathbb{A}_{2}\times\mathbb{B}_{2}$ be a fixed point of $\Phi_{2}$ in \eqref{Phi2} and let $y^{\ast}$ be the corresponding $1$-periodic solution of \eqref{rfdes}. Recall that $\omega^{\ast}$ is meant to be positive.
\begin{proposition}\label{p_Ax*1}
Under \ref{T1}, \ref{T2}, \ref{T3} and \ref{T5}, there exist $r_{2}\in(0,\omega^{\ast})$ and $\kappa_{2}\geq0$ such that
\begin{equation*}
\setlength\arraycolsep{0.1em}\begin{array}{rcl}
\|D\Phi_{2}(u,\psi,\omega)&-&D\Phi_{2}(u^{\ast},\psi^{\ast},\omega^{\ast})\|_{\mathbb{U}_{2}\times\mathbb{A}_{2}\times\mathbb{B}_{2}\leftarrow\mathbb{U}_{2}\times\mathbb{A}_{2}\times(0,+\infty)}\\[2mm]
&\leq&\kappa_{2}\|(u,\psi,\omega)-(u^{\ast},\psi^{\ast},\omega^{\ast})\|_{\mathbb{U}_{2}\times\mathbb{A}_{2}\times\mathbb{B}_{2}}
\end{array}
\end{equation*}
for all $(u,\psi,\omega)\in\overline{B}((u^{\ast},\psi^{\ast},\omega^{\ast}),r_{2})$.
\end{proposition}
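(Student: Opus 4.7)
The plan is to reduce the operator-norm estimate to bounds on the first component of $D\Phi_2$ only, since in view of Corollary~\ref{c_DPhi2} the second and third components of $D\Phi_2(u,\psi,\omega)$ (namely $\mathcal{G}_2(u',\psi')_1$ and $\omega'-p(\mathcal{G}_2(u',\psi')\vert_{[0,1]})$) do not depend on the base point $(u,\psi,\omega)$ and therefore contribute zero to the difference $D\Phi_2(u,\psi,\omega)-D\Phi_2(u^{\ast},\psi^{\ast},\omega^{\ast})$ applied to any increment $(u',\psi',\omega')$. Writing $v=\mathcal{G}_{2}(u,\psi)$ and $v^{\ast}=\mathcal{G}_{2}(u^{\ast},\psi^{\ast})$ (with $\|v-v^{\ast}\|_{\mathbb{V}_2}\lesssim\|(u-u^{\ast},\psi-\psi^{\ast})\|_{\mathbb{U}_2\times\mathbb{A}_2}$ by Proposition~\ref{p_G}), what has to be controlled is, uniformly in $t\in[0,1]$,
\begin{equation*}
\bigl[\mathfrak{L}_{2}(t;v,\omega)\mathcal{G}_{2}(u',\psi')_t\circ s_\omega-\mathfrak{L}_{2}(t;v^{\ast},\omega^{\ast})\mathcal{G}_{2}(u',\psi')_t\circ s_{\omega^{\ast}}\bigr]+\omega'\bigl[\mathfrak{M}_{2}(t;v,\omega)-\mathfrak{M}_{2}(t;v^{\ast},\omega^{\ast})\bigr].
\end{equation*}

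Next I would split each bracket by introducing intermediate points so that only one argument varies at a time: for the $\mathfrak{L}_2$--bracket, add and subtract $\mathfrak{L}_{2}(t;v^{\ast},\omega^{\ast})\mathcal{G}_{2}(u',\psi')_t\circ s_\omega$; this produces on one hand a factor $\mathfrak{L}_{2}(t;v,\omega)-\mathfrak{L}_{2}(t;v^{\ast},\omega^{\ast})$, on the other hand a composition difference $\mathcal{G}_{2}(u',\psi')_t\circ s_\omega-\mathcal{G}_{2}(u',\psi')_t\circ s_{\omega^{\ast}}$. For $\mathfrak{L}_{2}(t;v,\omega)-\mathfrak{L}_{2}(t;v^{\ast},\omega^{\ast})=(\omega-\omega^{\ast})DG(v_t\circ s_\omega)+\omega^{\ast}[DG(v_t\circ s_\omega)-DG(v^{\ast}_t\circ s_{\omega^{\ast}})]$, I would apply \ref{T5} to the second summand after estimating
\begin{equation*}
\|v_t\circ s_\omega-v^{\ast}_t\circ s_{\omega^{\ast}}\|_{\mathtt{Y}}\leq\|v-v^{\ast}\|_{\infty}+\|v^{\ast\prime}\|_{\infty}\,\tau\,\frac{|\omega-\omega^{\ast}|}{\omega\omega^{\ast}},
\end{equation*}
using a first-order Taylor expansion of $v^{\ast}$ (which is in $B^{1,\infty}$); the first summand is handled by boundedness of $DG$ near $v^{\ast}_t\circ s_{\omega^{\ast}}$ (a consequence of \ref{T4}--\ref{T5}). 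The composition difference is bounded analogously using $\|\mathcal{G}_{2}(u',\psi')\|_{\mathbb{V}_2}\leq 3\|(u',\psi')\|_{\mathbb{U}_2\times\mathbb{A}_2}$ from Proposition~\ref{p_G}.

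The $\mathfrak{M}_2$--bracket requires a little more care. I would expand it as
\begin{equation*}
\bigl[G(v_t\circ s_\omega)-G(v^{\ast}_t\circ s_{\omega^{\ast}})\bigr]-\bigl[DG(v_t\circ s_\omega)(v'_t\circ s_\omega)\cdot s_\omega-DG(v^{\ast}_t\circ s_{\omega^{\ast}})(v^{\ast\prime}_t\circ s_{\omega^{\ast}})\cdot s_{\omega^{\ast}}\bigr],
\end{equation*}
where the first difference is controlled by local Lipschitzness of $G$ (\ref{T4}) and the same composition estimate above. For the second difference, a further telescoping isolates three factors: $DG(v_t\circ s_\omega)-DG(v^{\ast}_t\circ s_{\omega^{\ast}})$ (use \ref{T5}), $v'_t\circ s_\omega-v^{\ast\prime}_t\circ s_{\omega^{\ast}}$, and $s_\omega-s_{\omega^{\ast}}$ (pointwise $O(|\omega-\omega^{\ast}|)$). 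Multiplying by $|\omega'|\leq\|(u',\psi',\omega')\|_{\mathbb{U}_2\times\mathbb{A}_2\times\mathbb{B}_2}$ yields the required Lipschitz estimate. Combining the two brackets, choosing $r_2\in(0,\omega^{\ast})$ so that all the composition arguments stay in the neighborhood on which \ref{T5} is effective, furnishes $\kappa_2$.

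The main obstacle is the factor $v'_t\circ s_\omega-v^{\ast\prime}_t\circ s_{\omega^{\ast}}$, because $v'$ lives only in $B^{\infty}$ and thus has no a priori modulus of continuity that would make $v'_t\circ s_\omega-v'_t\circ s_{\omega^{\ast}}$ small. Here I rely on the fact that $v^{\ast}$ is a fixed point: writing $v^{\ast\prime}=u^{\ast}=\omega^{\ast}G(v^{\ast}_{\cdot}\circ s_{\omega^{\ast}})$ and using \ref{T4} together with the Lipschitzness of $t\mapsto v^{\ast}_t$, one obtains that $v^{\ast\prime}$ is itself Lipschitz, so that
\begin{equation*}
\bigl|v^{\ast\prime}_t(s_\omega(\sigma))-v^{\ast\prime}_t(s_{\omega^{\ast}}(\sigma))\bigr|\leq L\,|s_\omega(\sigma)-s_{\omega^{\ast}}(\sigma)|\leq L\,\tau\,\frac{|\omega-\omega^{\ast}|}{\omega\omega^{\ast}};
\end{equation*}
the remaining part $v'_t\circ s_\omega-v^{\ast\prime}_t\circ s_\omega$ is bounded by $\|v'-v^{\ast\prime}\|_\infty\leq\|v-v^{\ast}\|_{\mathbb{V}_2}$. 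It is exactly the higher regularity of the fixed point (rather than of a generic element of $\mathbb{V}_2$) that unlocks this estimate, and this is the only non-routine step of the argument.
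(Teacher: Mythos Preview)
Your proof is correct and follows essentially the same strategy as the paper: reduce to the first component of $D\Phi_2$, telescope the $\mathfrak{L}_2$ and $\mathfrak{M}_2$ contributions (the paper writes this as the product expansions $(A_1+A_2)(B_1+B_2)(C_1+C_2)-A_2B_2C_2$ and $(B_1+B_2)(D_1+D_2)(E_1+E_2)-B_2D_2E_2$, which amounts to your additive splitting), and resolve the only delicate term $v'_t\circ s_\omega-v^{\ast\prime}_t\circ s_{\omega^{\ast}}$ by exploiting that $v^{\ast\prime}$ is Lipschitz because $v^{\ast}$ is a fixed point---exactly the content of Lemma~\ref{l_v*'}. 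One small correction: you invoke \ref{T4} twice (for boundedness of $DG$ and for local Lipschitzness of $G$), but \ref{T4} is not among the hypotheses of the proposition; both facts follow instead from \ref{T3} together with \ref{T5} (the latter via the mean value theorem, as the paper does for the term $Q$).
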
 
\begin{proof}
In this proof we set for brevity $v:=\mathcal{G}_{2}(u,\psi)$, $v^{\ast}:=\mathcal{G}_{2}(u^{\ast},\psi^{\ast})$ and $\bar v:=\mathcal{G}_{2}(\bar u,\bar\psi)$.

Following \eqref{normA}, we prove that there exist $r_{2}>0$ and $\kappa_{2}\geq0$ such that
\begin{equation*}
\setlength\arraycolsep{0.1em}\begin{array}{rcl}
\|D\Phi_{2}(u,\psi,\omega)(\bar u,\bar\psi,\bar\omega)&-&D\Phi_{2}(u^{\ast},\psi^{\ast},\omega^{\ast})(\bar u,\bar\psi,\bar\omega)\|_{\mathbb{U}_{2}\times\mathbb{A}_{2}\times\mathbb{B}_{2}}\\[2mm]
&\leq&\kappa_{2}\|(\bar u, \bar\psi,\bar\omega)\|_{\mathbb{U}_{2}\times\mathbb{A}_{2}\times\mathbb{B}_{2}}\cdot\|(u,\psi,\omega)-(u^{\ast},\psi^{\ast},\omega^{\ast})\|_{\mathbb{U}_{2}\times\mathbb{A}_{2}\times\mathbb{B}_{2}}
\end{array}
\end{equation*}
for all $(u,\psi,\omega)\in\overline{B}((u^{\ast},\psi^{\ast},\omega^{\ast}),r_{2})$ and all $(\bar u, \bar\psi,\bar\omega)\in\mathbb{U}_{2}\times\mathbb{A}_{2}\times(0,+\infty)$. From Corollary \ref{c_DPhi2} it is clear that, given the linearity of both $\mathcal{G}_{2}$ and $p$, we need to monitor only the first component of $D\Phi_{2}$, i.e., the one in $\mathbb{U}_{2}$. Then, by defining
\begin{equation}\label{P}
P(t):=\omega DG(v_{t}\circ s_{\omega})\bar v_{t}\circ s_{\omega}-\omega^{\ast}DG(v^{\ast}_{t}\circ s_{\omega^{\ast}})\bar v_{t}\circ s_{\omega^{\ast}},
\end{equation}
\begin{equation}\label{Q}
Q(t):=\bar\omega[G(v_{t}\circ s_{\omega})-G(v^{\ast}_{t}\circ s_{\omega^{\ast}})]
\end{equation}
and
\begin{equation}\label{R}
R(t):=-\bar\omega[DG(v_{t}\circ s_{\omega})v_{t}'\circ s_{\omega}\cdot s_{\omega}-DG(v^{\ast}_{t}\circ s_{\omega^{\ast}}){v^{\ast}_{t}}'\circ s_{\omega^{\ast}}\cdot s_{\omega^{\ast}}]
\end{equation}
through \eqref{L2} and \eqref{M2}, we are led to bound $|P(t)+Q(t)+R(t)|$ for all $t\in[0,1]$ given the choice of $\mathbb{U}_{2}$ in \ref{T2}.

Let us start with \eqref{P}, which we schematically rewrite as
\begin{equation}\label{ABC}
\setlength\arraycolsep{0.1em}\begin{array}{rcl}
P(t)&=&(A_{1}+A_{2})(B_{1}+B_{2})(C_{1}+C_{2})-A_{2}B_{2}C_{2}\\[2mm]
&=&A_{1}B_{1}C_{1}+A_{1}B_{1}C_{2}+A_{1}B_{2}C_{1}+A_{1}B_{2}C_{2}+A_{2}B_{1}C_{1}+A_{2}B_{1}C_{2}+A_{2}B_{2}C_{1}
\end{array}
\end{equation}
for
\begin{equation*}
\setlength\arraycolsep{0.1em}\begin{array}{ll}
A_{1}:=\omega-\omega^{\ast},&\quad A_{2}:=\omega^{\ast},\\[2mm]
B_{1}:=DG(v_{t}\circ s_{\omega})-DG(v^{\ast}_{t}\circ s_{\omega^{\ast}}),&\quad B_{2}:=DG(v^{\ast}_{t}\circ s_{\omega^{\ast}}),\\[2mm]
C_{1}:=\bar v_{t}\circ s_{\omega}-\bar v_{t}\circ s_{\omega^{\ast}},&\quad C_{2}:=\bar v_{t}\circ s_{\omega^{\ast}}.
\end{array}
\end{equation*}
The plan is to bound every single term $A_{i}$, $B_{i}$ and $C_{i}$, $i=1,2$, to eventually get the desired bound for $P$. Then we proceed similarly for $R$ in \eqref{R}, while we proceed directly for $Q$ in \eqref{Q}. In doing this, quantities with the apex $^{\ast}$ are considered constant since related to the fixed point. Clearly $|A_{1}|\leq\|(u,\psi,\omega)-(u^{\ast},\psi^{\ast},\omega^{\ast})\|_{\mathbb{U}_{2}\times\mathbb{A}_{2}\times\mathbb{B}_{2}}$ follows from \eqref{normprod}, while $|A_{2}|=\omega^{\ast}$. As for $B_{1}$, under \ref{T5} we have
\begin{equation*}
\|B_{1}\|_{\mathbb{R}^{d}\leftarrow\mathtt{Y}}\leq\kappa\|v_{t}\circ s_{\omega}-v^{\ast}_{t}\circ s_{\omega^{\ast}}\|_{\mathtt{Y}}\leq\kappa\|v_{t}\circ s_{\omega}-v^{\ast}_{t}\circ s_{\omega}\|_{\mathtt{Y}}+\kappa\|v^{\ast}_{t}\circ s_{\omega}-v^{\ast}_{t}\circ s_{\omega^{\ast}}\|_{\mathtt{Y}}.
\end{equation*}
As for the first addend in the right-hand side above,
\begin{equation}\label{r/2}
\|v_{t}\circ s_{\omega}-v^{\ast}_{t}\circ s_{\omega}\|_{\mathtt{Y}}\leq2\|(u,\psi,\omega)-(u^{\ast},\psi^{\ast},\omega^{\ast})\|_{\mathbb{U}_{2}\times\mathbb{A}_{2}\times\mathbb{B}_{2}}
\end{equation}
follows directly from \eqref{G2} and \eqref{normprod} again. As for the second addend, by Lemma \ref{l_v*} we have $\|v^{\ast}_{t}\circ s_{\omega}-v^{\ast}_{t}\circ s_{\omega^{\ast}}\|_{\mathtt{Y}}\leq\|(u^{\ast},\psi^{\ast},\omega^{\ast})\|_{\mathbb{U}_{2}\times\mathbb{A}_{2}\times\mathbb{B}_{2}}\cdot\|s_{\omega}-s_{\omega^{\ast}}\|_{\infty}$.
Since
\begin{equation}\label{slip}
|s_{\omega}(\sigma)-s_{\omega^{\ast}}(\sigma)|=\left|\frac{\sigma}{\omega}-\frac{\sigma}{\omega^{\ast}}\right|\leq\frac{\tau|\omega-\omega^{\ast}|}{\omega^{\ast}\omega}
\end{equation}
holds for every $\sigma\in[-\tau,0]$, we finally get
\begin{equation*}
\|v^{\ast}_{t}\circ s_{\omega}-v^{\ast}_{t}\circ s_{\omega^{\ast}}\|_{\mathtt{Y}}\leq\frac{\tau\|(u^{\ast},\psi^{\ast},\omega^{\ast})\|_{\mathbb{U}_{2}\times\mathbb{A}_{2}\times\mathbb{B}_{2}}}{\omega^{\ast}(\omega^{\ast}-r)}\cdot\|(u,\psi,\omega)-(u^{\ast},\psi^{\ast},\omega^{\ast})\|_{\mathbb{U}_{2}\times\mathbb{A}_{2}\times\mathbb{B}_{2}}
\end{equation*}
for every $\omega\in\overline B(\omega^{\ast},r)$ and $r$ in \ref{T5}. Eventually,
\begin{equation*}
\|B_{1}\|_{\mathbb{R}^{d}\leftarrow\mathtt{Y}}\leq\kappa\left(2+\frac{\tau\|(u^{\ast},\psi^{\ast},\omega^{\ast})\|_{\mathbb{U}_{2}\times\mathbb{A}_{2}\times\mathbb{B}_{2}}}{\omega^{\ast}(\omega^{\ast}-r)}\right)\cdot\|(u,\psi,\omega)-(u^{\ast},\psi^{\ast},\omega^{\ast})\|_{\mathbb{U}_{2}\times\mathbb{A}_{2}\times\mathbb{B}_{2}}.
\end{equation*}
As for $B_{2}$, we write directly
\begin{equation}\label{K21}
\|B_{2}\|_{\mathbb{R}^{d}\leftarrow\mathtt{Y}}\leq\kappa_{2,1}:=\max_{t\in[0,1]}\|DG(v^{\ast}_{t}\circ s_{\omega^{\ast}})\|_{\mathbb{R}^{d}\leftarrow\mathtt{Y}},
\end{equation}
which makes sense since the map $t\mapsto v^{\ast}_{t}$ is uniformly continuous and so is $DG$ at the state corresponding to the fixed point under \ref{T5}.

The same arguments used above lead also to
\begin{equation*}
\|C_{1}\|_{\mathtt{Y}}\leq\frac{\tau\|(\bar u, \bar\psi,\bar\omega)\|_{\mathbb{U}_{2}\times\mathbb{A}_{2}\times\mathbb{B}_{2}}}{\omega^{\ast}(\omega^{\ast}-r)}\cdot\|(u,\psi,\omega)-(u^{\ast},\psi^{\ast},\omega^{\ast})\|_{\mathbb{U}_{2}\times\mathbb{A}_{2}\times\mathbb{B}_{2}}
\end{equation*}
and $\|C_{2}\|_{\mathtt{Y}}\leq2\|(\bar u, \bar\psi,\bar\omega)\|_{\mathbb{U}_{2}\times\mathbb{A}_{2}\times\mathbb{B}_{2}}$.

Eventually, we see that every triple $A_{i}B_{j}C_{k}$ in $P(t)$ in the last member of \eqref{ABC} contains a factor of index $1$, which is always bounded by some constant times $\|(u,\psi,\omega)-(u^{\ast},\psi^{\ast},\omega^{\ast})\|_{\mathbb{U}_{2}\times\mathbb{A}_{2}\times\mathbb{B}_{2}}$, as well as a $C$-term, whose bounds always contain $\|(\bar u, \bar\psi,\bar\omega)\|_{\mathbb{U}_{2}\times\mathbb{A}_{2}\times\mathbb{B}_{2}}$. Therefore, there exist $r_{2,P}\in(0,\omega^{\ast})$ and $\kappa_{2,P}\geq0$ such that
\begin{equation*}
\|P\|_{\mathbb{U}_{2}}\leq\kappa_{2,P}\|(\bar u, \bar\psi,\bar\omega)\|_{\mathbb{U}_{2}\times\mathbb{A}_{2}\times\mathbb{B}_{2}}\cdot\|(u,\psi,\omega)-(u^{\ast},\psi^{\ast},\omega^{\ast})\|_{\mathbb{U}_{2}\times\mathbb{A}_{2}\times\mathbb{B}_{2}}
\end{equation*}
for all $(u,\psi,\omega)\in\overline{B}((u^{\ast},\psi^{\ast},\omega^{\ast}),r_{2,P})$. Actually, it is enough to choose $r_{2,P}=r/2$ for $r$ in \ref{T5} by virtue of \eqref{r/2}, while the constant $\kappa_{2,P}$ can be recovered from the analysis above, though with some technical efforts.

The term $Q$ in \eqref{Q} can be bounded as similarly done above for $B_{1}$ once realized that one can apply the mean value theorem in a neighborhood of $v^{\ast}_{t}\circ s_{\omega^{\ast}}$ under \ref{T5}. Then, we conclude that there exist $r_{2,Q}\in(0,\omega^{\ast})$ and $\kappa_{2,Q}\geq0$ such that
\begin{equation*}
\|Q\|_{\mathbb{U}_{2}}\leq\kappa_{2,Q}\|(\bar u, \bar\psi,\bar\omega)\|_{\mathbb{U}_{2}\times\mathbb{A}_{2}\times\mathbb{B}_{2}}\cdot\|(u,\psi,\omega)-(u^{\ast},\psi^{\ast},\omega^{\ast})\|_{\mathbb{U}_{2}\times\mathbb{A}_{2}\times\mathbb{B}_{2}}
\end{equation*}
for all $(u,\psi,\omega)\in\overline{B}((u^{\ast},\psi^{\ast},\omega^{\ast}),r_{2,Q})$. Note that the factor $\|(\bar u, \bar\psi,\bar\omega)\|_{\mathbb{U}_{2}\times\mathbb{A}_{2}\times\mathbb{B}_{2}}$ here comes directly from the factor $\bar\omega$ in \eqref{Q}. Moreover we can take $r_{2,Q}=r/2$ again.

The term $R$ in \eqref{R} is treated as done for $P$ above, hence we first write
\begin{equation}\label{BDE}
\setlength\arraycolsep{0.1em}\begin{array}{rcl}
R(t)&=&-\bar\omega[(B_{1}+B_{2})(D_{1}+D_{2})(E_{1}+E_{2})-B_{2}D_{2}E_{2}]\\[2mm]
&=&-\bar\omega[B_{1}D_{1}E_{1}+B_{1}D_{1}E_{2}+B_{1}D_{2}E_{1}+B_{1}D_{2}E_{2}\\[2mm]
&&+B_{2}D_{1}E_{1}+B_{2}D_{1}E_{2}+B_{2}D_{2}E_{1}]
\end{array}
\end{equation}
for the same $B_{1}$ and $B_{2}$ above plus
\begin{equation*}
\setlength\arraycolsep{0.1em}\begin{array}{ll}
D_{1}:=v_{t}'\circ s_{\omega}-{v^{\ast}_{t}}'\circ s_{\omega^{\ast}},&\quad D_{2}:={v^{\ast}_{t}}'\circ s_{\omega^{\ast}},\\[2mm]
E_{1}:=s_{\omega}-s_{\omega^{\ast}},&\quad E_{2}:=s_{\omega^{\ast}}.
\end{array}
\end{equation*}

As for $D_{1}$ we have
\begin{equation*}
\setlength\arraycolsep{0.1em}\begin{array}{rcl}
\|D_{1}\|_{\mathtt{Y}}&\leq&\|v_{t}'\circ s_{\omega}-{v^{\ast}_{t}}'\circ s_{\omega}\|
_{\mathtt{Y}}+\|{v^{\ast}_{t}}'\circ s_{\omega}-{v^{\ast}_{t}}'\circ s_{\omega^{\ast}}\|_{\mathtt{Y}}\\[2mm]
&\leq&2\|(u,\psi,\omega)-(u^{\ast},\psi^{\ast},\omega^{\ast})\|_{\mathbb{U}_{2}\times\mathbb{A}_{2}\times\mathbb{B}_{2}}\\[2mm]
&&+\omega^{\ast}\kappa_{2,1}\|(u^*,\psi^*,\omega^*)\|_{\mathbb{U}_{2}\times\mathbb{A}_{2}\times\mathbb{B}_{2}}\cdot\|s_{\omega}-s_{\omega^{\ast}}\|_{\infty},
\end{array}
\end{equation*}
where the first addend in the right-hand side above follows from the definition of $\mathcal{G}$ in \eqref{G2} and the second addend follows from Lemma \ref{l_v*'}. Finally, by using \eqref{slip} we conclude that
\begin{equation*}
\setlength\arraycolsep{0.1em}\begin{array}{rcl}
\|D_{1}\|_{\mathtt{Y}}&\leq&\displaystyle\left(2+\frac{\tau\omega^{\ast}\kappa_{2,1}\|(u^*,\psi^*,\omega^*)\|_{\mathbb{U}_{2}\times\mathbb{A}_{2}\times\mathbb{B}_{2}}}{\omega^{\ast}(\omega^{\ast}-r)}\right)\cdot\|(u,\psi,\omega)-(u^{\ast},\psi^{\ast},\omega^{\ast})\|_{\mathbb{U}_{2}\times\mathbb{A}_{2}\times\mathbb{B}_{2}}.
\end{array}
\end{equation*}
As for $D_{2}$, instead, we directly get $\|D_{2}\|_{\mathtt{Y}}\leq2\|(u^{\ast},\psi^{\ast},\omega^{\ast})\|_{\mathbb{U}_{2}\times\mathbb{A}_{2}\times\mathbb{B}_{2}}$. Since
\begin{equation*}
\|E_{1}\|_{\infty}\leq\frac{\tau}{\omega^{\ast}(\omega^{\ast}-r)}\cdot\|(u,\psi,\omega)-(u^{\ast},\psi^{\ast},\omega^{\ast})\|_{\mathbb{U}_{2}\times\mathbb{A}_{2}\times\mathbb{B}_{2}}
\end{equation*}
and $\|E_{2}\|_{\infty}\leq\tau/\omega^{\ast}$, we eventually see that every triple $B_{i}D_{j}E_{k}$ in the last member of \eqref{BDE} contains a factor of index $1$, which are always bounded by some constant times $\|(u,\psi,\omega)-(u^{\ast},\psi^{\ast},\omega^{\ast})\|_{\mathbb{U}_{2}\times\mathbb{A}_{2}\times\mathbb{B}_{2}}$. Taking into account of the multiplying factor $\bar\omega$ in \eqref{BDE}, we conclude that there exist $r_{2,R}\in(0,\omega^{\ast})$ and $\kappa_{2,R}\geq0$ such that
\begin{equation*}
\|R\|_{\mathbb{U}_{2}}\leq\kappa_{2,R}\|(\bar u, \bar\psi,\bar\omega)\|_{\mathbb{U}_{2}\times\mathbb{A}_{2}\times\mathbb{B}_{2}}\cdot\|(u,\psi,\omega)-(u^{\ast},\psi^{\ast},\omega^{\ast})\|_{\mathbb{U}_{2}\times\mathbb{A}_{2}\times\mathbb{B}_{2}}
\end{equation*}
for all $(u,\psi,\omega)\in\overline{B}((u^{\ast},\psi^{\ast},\omega^{\ast}),r_{2,R})$ , with $r_{2,R}=r/2$ again.

The thesis eventually follows by choosing $r_{2}=r/2$ and $\kappa_{2}=\kappa_{2,P}+\kappa_{2,Q}+\kappa_{2,R}$.
\end{proof}
\begin{remark}\label{r_omega0}
Observe that the Lipschitz constant $\kappa_{2}$ grows unbounded as $\omega^{\ast}\rightarrow0$ due to the presence of the latter at the denominator of several of its terms.
\end{remark}
\noindent As far as formulation \eqref{bvp1} is concerned, it is not difficult to realize that the above proof would fail because of the analogous term $C_{1}$, i.e.,
\begin{equation*}
\overline{\mathcal{G}_{1}(\bar u,\bar\alpha)_{t}}\circ s_{\omega}-\overline{\mathcal{G}_{1}(\bar u,\bar\alpha)_{t}}\circ s_{\omega^{\ast}}.
\end{equation*}
Indeed, as already observed, the function $\overline{\mathcal{G}_{1}(\bar u,\bar\alpha)_{t}}$ is always discontinuous at $\theta=-t$, preventing the achievement of the necessary Lipschitz condition. Alternatively, a possible remedy is that of restricting to the spaces
\begin{equation}\label{A1U}
\mathbb{U}_{1}=B^{\infty}_{\pi}([0,1],\mathbb{R}^{d}):=\left\{u\in B^{\infty}([0,1],\mathbb{R}^{d})\ :\ \int_{0}^{1}u(s)\dd s=0\right\}
\end{equation}
and
\begin{equation}\label{A1V}
\mathbb{V}_{1}=B_{\pi}^{1,\infty}([0,1],\mathbb{R}^{d}):=\{v\in B^{1,\infty}([0,1],\mathbb{R}^{d})\ :\ v(0)=v(1)\}.
\end{equation}
These choices guarantee not only that $\overline{\mathcal{G}_{1}(\bar u,\bar\alpha)_{t}}$ is continuous, but also Lipschitz continuous thanks to the constraint of zero mean imposed to the derivative $u$. Note, however, that the same constraint gives $v(1)=v(0)=\alpha$ for $v=\mathcal{G}_{1}(u,\alpha)$ according to \eqref{G1}. The latter fact impedes to satisfy the next theoretical assumption as it will be evident later on.

As a final comment regarding this assumption, we note that it is not directly used in this work, even though its validity is required in Section \ref{s_validation_n} for a suitable approximation $G_{M}$ in place of $G$. Since the proof is unchanged, we prefer to give it here in full detail so as to follow the presentation in \cite{mas15NM}. Observe anyway that the comment given above about the failure of formulation \eqref{bvp1} holds unaltered since the mentioned critical step is independent of $G$ or $G_{M}$.

\bigskip
The fourth (and last) theoretical assumption in \cite{mas15NM}, viz. Assumption A$x^{\ast}2$ (page 536), concerns the well-posedness of a linear(ized) inhomogeneous version of the PAF \eqref{PAF}. Its validity can be proved under \ref{T1} and \ref{T2} again, together with \ref{T4} and an additional requirement, which is, for instance, a consequence of the {\it hyperbolicity} of the periodic solution at hands. Let us remark that the latter is a standard assumption in the context of application of the principle of linearized stability (see, e.g., \cite[Chapter XIV]{diekmann95}) or \cite[Chapter 10]{hale77}), in which one derives information on the stability of the concerned periodic solution by investigating the stability of the zero solution of \eqref{rfdet} linearized around the periodic solution itself. Let us observe that, on the one hand, stability analysis is amongst the main motivations supporting the computation of periodic solutions. On the other hand, the linearization of \eqref{rfdes} around the $\omega^{\ast}$-periodic solution $\mathtt{y}^{\ast}$ leads to considering the linear homogeneous RFDE
\begin{equation}\label{Lrfdet2}
y'(t)=\mathfrak{L}_{2}(t;v^{\ast},\omega^{\ast})y_{t}\circ s_{\omega^{\ast}}
\end{equation}
for $\mathfrak{L}_{2}$ in \eqref{L2}. Under \ref{T4} the associated initial value problem is well-posed and we denote by $T_{2}^{\ast}(t,s):Y\rightarrow Y$ the relevant (forward) evolution operator for $s\in\mathbb{R}$ and $t\geq s$. Let us note that $T_{2}^{\ast}(1,0)$ represents the corresponding {\it monodromy operator}, i.e., the operator advancing the state solution of one period. Then hyperbolicity implies the required additional hypothesis of $1$ being a simple Floquet multiplier, i.e., a simple eigenvalue of $T_{2}^{\ast}(1,0)$, besides having no other Floquet multipliers on the unit circle.

\begin{remark}\label{r_mu=1}
$1$ is always a Floquet multiplier due to linearization. Indeed, as a general fact the derivative of a solution of a nonlinear problem is always a solution of the problem obtained by linearizing around this solution. Consequently, if the latter is periodic, the linearized problem has a periodic solution.
\end{remark}

In the following we refer to Proposition \ref{p_Ax*1} also for the relevant notation. It is also convenient to introduce the abbreviations
\begin{equation}\label{L*M*}
\mathfrak{L}_{2}^{\ast}:=\mathfrak{L}_{2}(\cdot;v^{\ast},\omega^{\ast}),\qquad\mathfrak{M}_{2}^{\ast}:=\mathfrak{M}_{2}(\cdot;v^{\ast},\omega^{\ast}).
\end{equation}
Let us anticipate that the proof is not very difficult, if not for showing that a nongeneric case ($k_{1}=0$ in the proof below) is ruled out since it leads to a contradiction. Although this case is seemingly innocuous, the proof that it cannot hold is not as immediate. For this reason, we leave the treatment of this special case to the next section, giving the fact as granted in the current one. This way the deserved room can be devoted to its analysis without interrupting the main reading flow. Eventually, let us remark that this ``secondary'' proof represents a main contribution of the present work. Below $\sigma(A)$ denotes the spectrum of an operator $A$.
\begin{proposition}\label{p_Ax*2}
Under \ref{T1}, \ref{T2} and \ref{T4}, if $1\in\sigma(T_{2}^{\ast}(1,0))$ is simple, then the linear bounded operator $I_{\mathbb{U}_{2}\times\mathbb{A}_{2}\times\mathbb{B}_{2}}-D\Phi_{2}(u^{\ast},\psi^{\ast},\omega^{\ast})$ is invertible, i.e., for all $(u_{0},\psi_{0},\omega_{0})\in\mathbb{U}_{2}\times\mathbb{A}_{2}\times\mathbb{B}_{2}$ there exists a unique $(u,\psi,\omega)\in\mathbb{U}_{2}\times\mathbb{A}_{2}\times\mathbb{B}_{2}$ such that
\begin{equation}\label{Ax*20}
\left\{\setlength\arraycolsep{0.1em}\begin{array}{l}
u=\mathfrak{L}_{2}^{\ast}\mathcal{G}_{2}(u,\psi)_{\cdot}\circ s_{\omega^{\ast}}+\omega\mathfrak{M}_{2}^{\ast}+u_{0}\\[2mm]
\psi=\mathcal{G}_{2}(u,\psi)_{1}+\psi_{0}\\[2mm]
p(\mathcal{G}_{2}(u,\psi)\vert_{[0,1]})=\omega_{0}.
\end{array}
\right.
\end{equation}
\end{proposition}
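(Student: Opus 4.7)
The plan is to reformulate \eqref{Ax*20} in terms of the pair $(v,\omega)\in\mathbb{V}_{2}\times\mathbb{B}_{2}$, where $v:=\mathcal{G}_{2}(u,\psi)$ so that $v\vert_{[-1,0]}=\psi$ and $u=v'\vert_{[0,1]}$, and then to reduce the system to a finite number of scalar solvability conditions via the evolution operator. The first equation of \eqref{Ax*20} reads as the inhomogeneous linear RFDE
\begin{equation*}
v'(t)=\mathfrak{L}_{2}^{\ast}(t)\,v_{t}\circ s_{\omega^{\ast}}+\omega\,\mathfrak{M}_{2}^{\ast}(t)+u_{0}(t),\qquad t\in[0,1],
\end{equation*}
with initial state $v_{0}=\psi$, which under \ref{T4} defines a well-posed IVP. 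A variation-of-constants representation in terms of the forward evolution operator $T_{2}^{\ast}(t,0)$ of the homogeneous equation \eqref{Lrfdet2} gives
\begin{equation*}
v_{1}=T_{2}^{\ast}(1,0)\psi+\omega W^{\ast}+Zu_{0},
\end{equation*}
where $W^{\ast}\in Y$ is the state at time $1$ of the linearization forced by $\mathfrak{M}_{2}^{\ast}$ from zero initial data and $Z\colon\mathbb{U}_{2}\to Y$ is bounded linear. The second equation of \eqref{Ax*20} then becomes
\begin{equation*}
(I-T_{2}^{\ast}(1,0))\psi-\omega W^{\ast}=\psi_{0}+Zu_{0}\qquad\text{in }Y,\qquad(\star)
\end{equation*}
while the third reduces to a scalar identity $p_{\psi}\psi+\omega p_{\omega}=\omega_{0}-p_{u}u_{0}$ with $p_{\psi}\in Y^{\ast}$, $p_{\omega}\in\mathbb{R}$ and $p_{u}\in\mathbb{U}_{2}^{\ast}$ determined by the linearization.

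I would then exploit the spectral structure of $T_{2}^{\ast}(1,0)$. Under \ref{T4} this operator is compact on $Y$ by the standard smoothening argument for RFDEs over an interval no shorter than the normalized delay, so Riesz--Schauder applies; together with the simplicity of $1\in\sigma(T_{2}^{\ast}(1,0))$, it yields the topological direct sum $Y=\mathrm{span}(\phi_{1})\oplus\mathrm{range}(I-T_{2}^{\ast}(1,0))$, where $(I-T_{2}^{\ast}(1,0))$ is an isomorphism of its range onto itself and $\phi_{1}$ is the Floquet eigenfunction, which by Remark \ref{r_mu=1} may be identified with $y^{\ast\prime}_{0}$. Let $P^{\ast}$ be the spectral projector onto $\mathrm{span}(\phi_{1})$ along the range. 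Applying $I-P^{\ast}$ to $(\star)$ uniquely determines $(I-P^{\ast})\psi$ as an affine function of $\omega$ and the data, while applying $P^{\ast}$ collapses $(\star)$, upon writing $P^{\ast}W^{\ast}=k_{1}\phi_{1}$ for some scalar $k_{1}\in\mathbb{R}$, to
\begin{equation*}
-\omega k_{1}\phi_{1}=P^{\ast}(\psi_{0}+Zu_{0}).
\end{equation*}

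Provided $k_{1}\neq 0$, this equation fixes $\omega$ uniquely; the range component of $(\star)$ then fixes $(I-P^{\ast})\psi$; and the remaining scalar $P^{\ast}\psi=c\phi_{1}$ is determined by the phase equation, whose coefficient $p_{\psi}(\phi_{1})=p(y^{\ast\prime}\vert_{[0,1]})$ is nonzero by the standing assumption that $p$ removes translational invariance (for the integral phase condition with $\tilde y=y^{\ast}$, this reduces to $\int_{0}^{1}\vert y^{\ast\prime}(t)\vert^{2}\,\mathrm{d}t>0$). Reading off $u=v'\vert_{[0,1]}$ and $\psi=v_{0}$ yields the unique $(u,\psi,\omega)$ solving \eqref{Ax*20} for the given data, which is the invertibility claim.

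The main---and acknowledged---obstacle is establishing the non-degeneracy $k_{1}\neq 0$. Intuitively, $\mathfrak{M}_{2}^{\ast}$ encodes the infinitesimal response of the right-hand side to variations in the period, and $k_{1}=0$ would correspond to the non-generic situation where this response is absorbed entirely into $\mathrm{range}(I-T_{2}^{\ast}(1,0))$, so that $\omega$ becomes undetermined by $(\star)$. I would postpone the ruling out of this case to the dedicated argument announced in Section \ref{s_nongeneric}: the strategy would be to unfold the explicit form \eqref{M2} of $\mathfrak{M}_{2}^{\ast}$, exploit the fixed-point identity $v^{\ast\prime}=\omega^{\ast}G(v^{\ast}_{\cdot}\circ s_{\omega^{\ast}})$ together with the fact (Remark \ref{r_mu=1}) that $y^{\ast\prime}$ spans the Floquet eigenspace of $1$, and derive a contradiction against the algebraic simplicity of $1$ as a Floquet multiplier.
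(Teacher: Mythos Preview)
Your proposal is correct and follows essentially the same route as the paper: recast \eqref{Ax*20} as an inhomogeneous IVP for $v=\mathcal{G}_{2}(u,\psi)$, apply variation of constants to obtain an equation of the form $(I-T_{2}^{\ast}(1,0))\psi=\omega\xi_{1}^{\ast}+\xi_{2}^{\ast}$, split along the Fredholm decomposition $Y=R\oplus K$ at the simple eigenvalue $1$ to determine $\omega$ (assuming $k_{1}\neq0$), and then fix the remaining kernel component of $\psi$ via the phase condition. Your explicit invocation of compactness of $T_{2}^{\ast}(1,0)$ and Riesz--Schauder is a minor elaboration of what the paper cites from Hale, and your deferral of the non-degeneracy $k_{1}\neq0$ to Section~\ref{s_nongeneric} matches the paper exactly.
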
 
\begin{proof}
The proof is based on treating \eqref{Ax*20} as an initial value problem for $v=\mathcal{G}_{2}(u,\psi)$, i.e.,
\begin{equation}\label{Ax*211}
\left\{\setlength\arraycolsep{0.1em}\begin{array}{l}
v'(t)=\mathfrak{L}_{2}^{\ast}(t)v_{t}\circ s_{\omega^{\ast}}+\omega\mathfrak{M}_{2}^{\ast}(t)+u_{0}(t)\\[2mm]
v_{0}=\psi
\end{array}
\right.
\end{equation}
for $t\in[0,1]$, imposing then the boundary conditions in \eqref{Ax*20}. Because the RFDE in \eqref{Ax*211} is linear inhomogeneous with continuous linear part under \ref{T4}, for every $\psi\in\mathbb{A}_{2}$ there exists a unique solution $v$ whose state can be expressed through the variation of constants formula
\begin{equation*}
v_{t}=T_{2}^{\ast}(t,0)\psi+\int_{0}^{t}[T_{2}^{\ast}(t,s)X_{0}][\omega\mathfrak{M}_{2}^{\ast}(s)+u_{0}(s)]\dd s,\quad t\in[0,1],
\end{equation*}
where
\begin{equation*}
X_0(\theta):=\begin{cases}
0,&\theta\in[-1,0),\\
I_d,&\theta=0,
\end{cases}
\end{equation*}
see \cite[Section 6.2]{hale77}. The first boundary condition in \eqref{Ax*20} gives then
\begin{equation}\label{Ax*21}
\psi=T_{2}^{\ast}(1,0)\psi+\int_{0}^{1}[T_{2}^{\ast}(1,s)X_{0}][\omega\mathfrak{M}_{2}^{\ast}(s)+u_{0}(s)]\dd s+\psi_{0}.
\end{equation}
Let now $R$ and $K$ be, respectively, the range and the kernel of $I_{Y}-T_{2}^{\ast}(1,0)$. Then (see, e.g., \cite[Section 8.2]{hale77})
\begin{equation}\label{deco}
Y=R\oplus K
\end{equation}
and, by the hypothesis on the multiplier $1$, we can set $K=\Span\{\varphi\}$ for $\varphi$ an eigenfunction of the multiplier $1$ itself. Moreover, let us assume $p(v(\cdot;\varphi)|_{[0,1]})\neq0$ (see Remark \ref{r_pfi0} below), where $v(\cdot;\varphi)$ denotes the solution of \eqref{Ax*211} exiting from $\varphi$.

From \eqref{Ax*21} let us define the elements of $Y$
\begin{equation*}
\xi_{1}^{\ast}:=\int_0^{1}[T_{2}^{\ast}(1,s)X_{0}]\mathfrak{M}_{2}^{\ast}(s)\dd s,\qquad\xi_{2}^{\ast}:=\int_0^{1}[T_{2}^{\ast}(1,s)X_{0}]u_{0}(s)\dd s+\psi_{0},
\end{equation*}
so that \eqref{Ax*21} becomes
\begin{equation}\label{Ax*22}
[I_{Y}-T_{2}^{\ast}(1,0)]\psi=\omega\xi_{1}^{\ast}+\xi_{2}^{\ast}.
\end{equation}
Note that $\psi_{0}\in Y$ since $\mathbb{A}_{2}\subseteq Y$. From \eqref{deco} it follows that $\xi_{1}^{\ast}$ can be written uniquely as
\begin{equation}\label{xi1}
\xi_{1}^{\ast}=r_{1}+k_{1}\varphi,
\end{equation}
where $r_{1}\in R$ and $k_{1}\in\mathbb{R}$. Similarly, $\xi_{2}^{\ast}=r_{2}+k_{2}\varphi$. Then from \eqref{Ax*22} it must be $\omega\xi_{1}^{\ast}+\xi_{2}^{\ast}\in R$, which implies $\omega k_{1}+k_{2}=0$. Therefore, by assuming $k_{1}\neq 0$, it follows that
\begin{equation}\label{omega}
\omega=-k_{2}/k_{1}
\end{equation}
is the only possible solution. As anticipated, we show in the forthcoming section that it cannot be otherwise, since $k_{1}=0$ leads to a contradiction.

Eventually, let $\eta$ be such that $\omega\xi_{1}^{\ast}+\xi_{2}^{\ast}=\eta-T_{2}^{\ast}(1,0)\eta$. Then every $\psi$ satisfying \eqref{Ax*22} can be written as $\eta+\lambda\varphi$ for some $\lambda\in\mathbb{R}$. The value of $\lambda$ is fixed by imposing the second boundary condition in \eqref{Ax*20}, i.e., $p(v(\cdot;\eta)|_{[0,1]})+\lambda p(v(\cdot;\varphi)|_{[0,1]})=\omega_{0}$. Uniqueness follows from $p(v(\cdot;\varphi)|_{[0,1]})\neq0$.
\end{proof}
\begin{remark}\label{r_pfi0}
The condition $p(v(\cdot;\varphi)|_{[0,1]})\neq0$ is generic and not restrictive at all. In any case, it is always possible to change $p$ in order to meet the above requirement.
\end{remark}
\noindent As for \eqref{bvp1}, it is immediate to verify that a similar result cannot be obtained under the choices \eqref{A1U} and \eqref{A1V}. Indeed, for every $\alpha_{0}\in\mathbb{A}_{1}$ one should find a unique $\alpha\in\mathbb{A}_{1}$ satisfying $\alpha=\mathcal{G}_{1}(u,\alpha)(1)+\alpha_{0}$, but since $u\in\mathbb{U}_{1}$ implies that $u$ has zero mean, it must necessarily be $\mathcal{G}_{1}(u,\alpha)(1)=\alpha$, i.e., $\alpha_{0}=0$. This fact definitively proves the failure of the approach proposed in \cite{mas15NM} with respect to formulation \eqref{bvp1}, so that from now on we focus exclusively on formulation \eqref{bvp2}. In particular, in the remainder of the work we drop the use of the index $2$ to lighten the notation.
\begin{remark}\label{r_neutral}
Concerning neutral problems as considered in \cite{mas15NM}, in the periodic case the Fr\'echet-differentiability with respect to $\omega$ would require differentiability in $\mathbb{U}$ and, consequently, a larger norm. This in turn would hinder the Fr\'echet-differ\-entiability itself, recall \eqref{DF20}. We go back to this observation in Section \ref{s_concluding} in view of future extensions.
\end{remark}
\subsection{The nongeneric case}
\label{s_nongeneric}
The following content completes the proof of Pro\-position \ref{p_Ax*2} by showing that $k_{1}\neq0$ must necessarily hold in \eqref{xi1}. We proceed by contradiction assuming that $k_{1}=0$, which is equivalent to $\xi_{1}^{\ast}\in R$. Therefore, there exists $\gamma\in Y$ such that $[I_{Y}-T_{2}^{\ast}(1,0)]\gamma=\omega\xi_{1}^{\ast}$, which amounts to say that
\begin{equation}\label{yL2M2}
y'(t)=\mathfrak{L}_{2}^{\ast}(t)y_{t}\circ s_{\omega^{\ast}}+\omega\mathfrak{M}_{2}^{\ast}(t)
\end{equation}
has a $1$-periodic solution (with $y_{0}=\gamma$). The proof given next that this is not possible requires several tools.

\bigskip
As a first step, let us express the action of $\mathfrak{L}_{2}^{\ast}$ through the Riemann-Stieltjes integral
\begin{equation}\label{L2*}
\mathfrak{L}_{2}^{\ast}(t)\psi\circ s_{\omega^{\ast}}=\int_{-\tau}^{0}\dd_{\sigma}\mathtt{n}^{\ast}(t,\sigma)\psi(s_{\omega^{\ast}}(\sigma)),\quad\psi\in Y,
\end{equation}
where, for $t\in\mathbb{R}$, $\mathtt{n}^{\ast}(t,\cdot):[-\tau,0]\rightarrow\mathbb{R}^{d\times d}$ is of normalized bounded variation and, for every $\sigma\in[-\tau,0]$, $\mathtt{n}^{\ast}(\cdot,\sigma):\mathbb{R}\rightarrow\mathbb{R}^{d\times d}$ is $1$-periodic, see, e.g., \cite[Section 6.1]{hale77}. Then, by \eqref{M2}, $\mathfrak{M}_{2}^{\ast}$ in \eqref{yL2M2} becomes
\begin{equation}\label{M2*}
\setlength\arraycolsep{0.1em}\begin{array}{rcl}
\mathfrak{M}_{2}^{\ast}(t)&=&\displaystyle G(v^{\ast}_{t}\circ s_{\omega^{\ast}})-\int_{-\tau}^{0}\dd_{\sigma}\mathtt{n}^{\ast}(t,\sigma){v^{\ast}}'(t+s_{\omega^{\ast}}(\sigma))\cdot\frac{s_{\omega^{\ast}}(\sigma)}{\omega^{\ast}}\\[3mm]
&=&\displaystyle\frac{1}{\omega^{\ast}}\left({v^{\ast}}'(t)-\int_{-r}^{0}\dd_{\theta}n^{\ast}(t,\theta){v^{\ast}}'(t+\theta)\theta\right)
\end{array}
\end{equation}
for $n(t,\theta)=n(t,s_{\omega^{\ast}}(\sigma)):=\mathtt{n}(t,\sigma)$ thanks to the change of variable \eqref{somega} for $\omega=\omega^{\ast}$. Above we used also the fact that $v^{\ast}$ is the $1$-periodic solution of \eqref{rfdet} for $\omega=\omega^{\ast}$ again.

\bigskip
Next we consider tools from the theory of adjoint equations for RFDEs, see \cite[Sections 6.3 and 8.2]{hale77}. Let us start from equation \eqref{Lrfdet2}, which, through \eqref{L2*} and \eqref{somega}, reads
\begin{equation}\label{linearized}
y'(t)=\int_{-r}^{0}\dd_{\theta}n^{\ast}(t,\theta)y(t+\theta).
\end{equation}
Assume it has a solution $y$ defined on the whole line, as is the case for periodic solutions. Consider also the formal adjoint equation
\begin{equation}\label{adjoint}
z(t)+\int_{t}^{\infty}z(\theta)n^{\ast}(\theta,t-\theta)\dd\theta=\text{constant},
\end{equation}
assuming it has a solution $z$ defined as well on the whole line, and let us stress that $z(t)$ is intended as a row vector in $\mathbb{R}^{d}$. For the latter define also the state $z^{t}$ as $z^{t}(\nu):=z(t+\nu)$, $\nu\in[0,r]$. Finally, for the solutions $y$ and $z$ introduced above let us consider the bilinear form \cite[equation (2.10) p.199]{hale77}
\begin{equation}\label{bil}
(z^{t},y_{t})_{t}:=z(t)y(t)+\int_{-r}^{0}\dd_{\beta}\left[\int_{0}^{r}z(t+\xi)n^{\ast}(t+\xi,\beta-\xi)\dd\xi\right]y(t+\beta).
\end{equation}
It follows from \cite[Lemma 2.1, Section 8.2]{hale77} that
\begin{equation}\label{zyc}
(z^{t},y_{t})_{t}=c
\end{equation}
for some constant $c\in\mathbb{R}$ independently of $t$.

\bigskip
We already observed in Remark \ref{r_mu=1} that \eqref{linearized} has a $1$-periodic solution (viz. ${v^{\ast}}'$). Moreover, under the hypothesis of Proposition \ref{p_Ax*2}, this is the only $1$-periodic solution. Then also the adjoint equation \eqref{adjoint} has a unique $1$-periodic solution. Indeed, the monodromy operators of \eqref{linearized} and of its adjoint \eqref{adjoint} share the same spectrum \cite[Section 8.2]{hale77} (see \cite[Section 6.4]{hale77} for the relevant definitions and properties in the autonomous case), and it is well-known that periodic solutions correspond to having the multiplier $1$ (which is in fact assumed to be simple).

Let then $z^{\ast}$ be the $1$-periodic solution of \eqref{adjoint}. Following \cite[p.200]{hale77} and \eqref{zyc}, we conclude that 
\begin{equation}\label{zyc*}
({z^{\ast}}^{t},{v^{\ast}_{t}}')_{t}=c^{\ast}\neq0.
\end{equation}
Integrating over one period gives
\begin{equation*}
\begin{split}
c^{\ast}=\int_{0}^{1}c^{\ast}dt={}&\int_{0}^{1}z^{\ast}(t){v^{\ast}}'(t)dt\\
{}&+\int_{0}^{1}\int_{-r}^{0}\dd_{\beta}\left[\int_{0}^{r}z^{\ast}(t+\xi)n^{\ast}(t+\xi,\beta-\xi)\dd\xi\right]{v^{\ast}}'(t+\beta)\dd t.
\end{split}
\end{equation*}
As for the last integral, thanks to the periodicity of ${v^{\ast}}'$, $z^{\ast}$ and $n^{\ast}$, by exchanging the order of integration and by observing that
\begin{equation*}
\int_{a}^{b}\dd_{\theta}n^{\ast}(t,\theta){v^{\ast}}'(t+\theta)=0
\end{equation*}
whenever $a>0$ or $b<-r$ (see again \cite[Section 6.1]{hale77}), we have
\begin{equation*}
\begin{split}
\int_{0}^{1}\int_{-r}^{0}\dd_{\beta}{}&\bigg[\int_{0}^{r}z^{\ast}(t+\xi)n^{\ast}(t+\xi,\beta-\xi)\dd\xi\bigg]{v^{\ast}}'(t+\beta)\dd t\\
={}&\int_{-r}^{0}\dd_{\beta}\int_{0}^{r}\bigg[\int_{0}^{1}z^{\ast}(t+\xi)n^{\ast}(t+\xi,\beta-\xi){v^{\ast}}'(t+\beta)\dd t\bigg]\dd\xi\\
={}&\int_{-r}^{0}\dd_{\beta}\int_{0}^{r}\bigg[\int_{0}^{1}z^{\ast}(t)n^{\ast}(t,\beta-\xi){v^{\ast}}'(t+\beta-\xi)\dd t\bigg]\dd\xi\\
={}&\int_{0}^{1}z^{\ast}(t)\int_{-r}^{0}\dd_{\beta}\bigg[\int_{0}^{r}n^{\ast}(t,\beta-\xi){v^{\ast}}'(t+\beta-\xi)\dd\xi\bigg]\dd t\\
={}&\int_{0}^{1}z^{\ast}(t)\int_{0}^{r}\bigg[\int_{-r}^{0}\dd_{\beta}n^{\ast}(t,\beta-\xi){v^{\ast}}'(t+\beta-\xi)\bigg]\dd\xi\dd t\\
={}&\int_{0}^{1}z^{\ast}(t)\int_{0}^{r}\bigg[\int_{-\xi-r}^{-\xi}\dd_{\theta}n^{\ast}(t,\theta){v^{\ast}}'(t+\theta)\bigg]\dd\xi\dd t\\
={}&\int_{0}^{1}z^{\ast}(t)\int_{0}^{r}\bigg[\int_{-r}^{-\xi}\dd_{\theta}n^{\ast}(t,\theta){v^{\ast}}'(t+\theta)\bigg]\dd\xi\dd t\\
={}&\int_{0}^{1}z^{\ast}(t)\int_{-r}^{0}\dd_{\theta}\bigg[\int_{0}^{-\theta}n^{\ast}(t,\theta){v^{\ast}}'(t+\theta)\dd\xi\bigg]\dd t\\
={}&\int_{0}^{1}z^{\ast}(t)\int_{-r}^{0}\dd_{\theta}n^{\ast}(t,\theta){v^{\ast}}'(t+\theta)\bigg[\int_{0}^{-\theta}\dd\xi\bigg]\dd t\\
={}&-\int_{0}^{1}z^{\ast}(t)\int_{-r}^{0}\dd_{\theta}n^{\ast}(t,\theta){v^{\ast}}'(t+\theta)\theta\dd t.
\end{split}
\end{equation*}
By using \eqref{M2*}, we finally get
\begin{equation}\label{c*}
\begin{split}
c^{\ast}={}&\int_{0}^{1}z^{\ast}(t){v^{\ast}}'(t)dt-\int_{0}^{1}z^{\ast}(t)\int_{-r}^{0}\dd_{\theta}n^{\ast}(t,\theta){v^{\ast}}'(t+\theta)\theta\dd t\\
={}&\int_{0}^{1}z^{\ast}(t)\left({v^{\ast}}'(t)-\int_{-r}^{0}\dd_{\theta}n^{\ast}(t,\theta){v^{\ast}}'(t+\theta)\theta\right)\dd t\\
={}&\omega^{\ast}\int_{0}^{1}z^{\ast}(t)\mathfrak{M}_{2}^{\ast}(t)\dd t.
\end{split}
\end{equation}

\bigskip
Eventually, let us go back to \eqref{yL2M2}, which has a $1$-periodic solution if $k_{1}=0$. Then, \cite[Theorem 1.2, Section 9.1]{hale77} necessarily gives
\begin{equation*}
\int_{0}^{1}z^{\ast}(t)\mathfrak{M}_{2}^{\ast}(t)\dd t=0
\end{equation*}
for any $1$-periodic solution $z^{\ast}$ of \eqref{adjoint}. This means $c^{\ast}=0$ by \eqref{c*}, which is {\color{black}a contradiction} thanks to \eqref{zyc*}.
\begin{remark}
Let us note that \cite[Theorem 1.2, Section 9.1]{hale77} refers to the differential form of \eqref{adjoint} (which indeed holds in the case of periodic solutions).
\end{remark}
\begin{remark}\label{mult}
We just mention that the main results of this section could be seen in the general framework of Fredholm theory, see, e.g., \cite[Chapter 4]{kress89}. To this aim, it should be proved that the state spaces of \eqref{linearized} and \eqref{adjoint} constitute a {\it dual system} when equipped with the bilinear form \eqref{bil} (which should be nondegenerate), and their respective monodromy operators are adjoint \cite[Equation (2.12), Section 8.2]{hale77}. Thus, the first Fredholm theorem \cite[Theorem 4.14]{kress89} would tell us that \eqref{linearized} and its adjoint \eqref{adjoint} have the same number of linearly independent $1$-periodic solutions (that is just $1$ under the hypothesis of hyperbolicity). The fact that they are not orthogonal ($c^{\ast}\neq 0$) would follow from the Fredholm alternative theorem \cite[Theorem 4.17]{kress89} and the decomposition \eqref{deco}.
\end{remark}
\section{Discretization}
\label{s_discretization}
As anticipated, \cite{mas15NM} requires also other assumptions besides the theoretical ones validated in Section \ref{s_validation_t}, which concern the chosen discretization scheme for the numerical method. Such scheme is defined by the {\it primary} and the {\it secondary} discretizations. We first introduce these discretizations and then check in Section \ref{s_validation_n} the validity of the relevant numerical assumptions in \cite{mas15NM}. Recall from the end of Section \ref{s_validation_t} that we deal only with formulation \eqref{bvp2} and thus we remark again that, to lighten the notation, we drop the index 2 used up to Section \ref{s_abstract} to distinguish from formulation \eqref{bvp1}.

\bigskip
The primary discretization consists in reducing the spaces $\mathbb{U}$ and $\mathbb{A}$ to finite-dimensional spaces $\mathbb{U}_{L}$ and $\mathbb{A}_{L}$, given a level of discretization $L$. This happens by means of {\it restriction} operators $\rho_{L}^{+}:\mathbb{U}\rightarrow\mathbb{U}_{L}$, $\rho_{L}^{-}:\mathbb{A}\rightarrow\mathbb{A}_{L}$ and {\it prolongation} operators $\pi_{L}^{+}:\mathbb{U}_{L}\rightarrow\mathbb{U}$, $\pi_{L}^{-}:\mathbb{A}_{L}\rightarrow\mathbb{A}$, which extend respectively to
\begin{equation}\label{RL}
R_{L}:\mathbb{U}\times\mathbb{A}\times\mathbb{B}\rightarrow\mathbb{U}_{L}\times\mathbb{A}_{L}\times\mathbb{B},\quad R_{L}(u,\psi,\omega):=(\rho_{L}^{+}u,\rho_{L}^{-}\psi,\omega)
\end{equation}
and
\begin{equation}\label{PL}
P_{L}:\mathbb{U}_{L}\times\mathbb{A}_{L}\times\mathbb{B}\rightarrow\mathbb{U}\times\mathbb{A}\times\mathbb{B},\quad P_{L}(u_{L},\psi_{L},\omega):=(\pi_{L}^{+}u_{L},\pi_{L}^{-}\psi_{L},\omega).
\end{equation}
All of them are linear and bounded. In the following we describe the specific choices we make in this context, based on piecewise polynomial interpolation.

Starting from $\mathbb{U}$, which concerns the interval $[0,1]$, we choose the uniform {\it outer} mesh
\begin{equation}\label{outmesh+}
\Omega_{L}^{+}:=\{t_{i}^{+}=ih\ :\ i=0,1,\ldots,L,\;h=1/L\}\subset[0,1],
\end{equation}
and {\it inner} meshes
\begin{equation}\label{inmesh+}
\Omega_{L,i}^{+}:=\{t_{i,j}^{+}:=t_{i-1}^{+}+c_{j}h\ :\ j=1,\ldots,m\}\subset[t_{i-1}^{+},t_{i}^{+}],\quad i=1,\ldots,L,
\end{equation}
where $0<c_{1}<\cdots<c_{m}<1$ are given abscissae for $m$ a positive integer. Correspondingly, we define
\begin{equation}\label{UL}
\mathbb{U}_{L}:=\mathbb{R}^{(1+Lm)\times d},
\end{equation}
whose elements $u_{L}$ are indexed as
\begin{equation}\label{uL}
u_{L}:=(u_{1,0},u_{1,1},\ldots,u_{1,m},\ldots,u_{L,1}\ldots,u_{L,m})^{T}
\end{equation}
with components in $\mathbb{R}^{d}$. Finally, we define, for $u\in\mathbb{U}$,
\begin{equation}\label{rL+}
\rho_{L}^{+}u:=(u(0),u(t_{1,1}^{+}),\ldots,u(t_{1,m}^{+}),\ldots,u(t_{L,1}^{+})\ldots,u(t_{L,m}^{+}))^{T}\in\mathbb{U}_{L}
\end{equation}
and, for $u_{L}\in\mathbb{U}_{L}$, $\pi_{L}^{+}u_{L}\in\mathbb{U}$ as the unique element of the space
\begin{equation}\label{PiLm+}
\Pi_{L,m}^{+}:=\{p\in C([0,1],\mathbb{R}^d)\ :\ p|_{[t_{i-1}^{+},t_{i}^{+}]}\in\Pi_m,\;i=1,\ldots,L\}
\end{equation}
such that
\begin{equation}\label{pL+}
\pi_{L}^{+}u_{L}(0)=u_{1,0},\quad\pi_{L}^{+}u_{L}(t_{i,j}^{+})=u_{i,j},\quad j=1,\ldots,m,\;i=1,\ldots,L.
\end{equation}
Above $\Pi_{m}$ is the space of $\mathbb{R}^d$-valued polynomials having degree $m$ and, when needed, we represent $p\in\Pi_{L,m}^{+}$ through its pieces as
\begin{equation}\label{lagrange}
p|_{[t_{i-1}^{+},t_{i}^{+}]}(t)=\sum_{j=0}^{m}\ell_{m,i,j}(t)p(t_{i,j}^{+}),\quad t\in[0,1],
\end{equation}
where, for ease of notation, we implicitly set
\begin{equation}\label{t0+}
t_{i,0}^{+}:=t_{i-1}^{+},\quad i=1,\ldots,L,
\end{equation}
and $\{\ell_{m,i,0},\ell_{m,i,1},\ldots,\ell_{m,i,m}\}$ is the Lagrange basis relevant to the nodes $\{t_{i,0}^{+}\}\cup\Omega_{L,i}^{+}$. Observe that the latter is invariant with respect to $i$ as long as we fix the abscissae $c_{j}$, $j=1,\ldots,m$, defining the inner meshes \eqref{inmesh+}. Indeed, for every $i=1,\ldots,L$,
\begin{equation*}
\ell_{m,i,j}(t)=\ell_{m,j}\left(\frac{t-t_{i-1}^{+}}{h}\right),\quad t\in[t_{i-1}^{+},t_{i}^{+}],
\end{equation*}
where $\{\ell_{m,0},\ell_{m,1},\ldots,\ell_{m,m}\}$ is the Lagrange basis in $[0,1]$ relevant to the abscissae $c_{0},c_{1},\ldots,c_{m}$ with $c_{0}:=0$. Moreover, it is useful to define also the associated Lebesgue constants as
\begin{equation*}
\Lambda_{m,i}:=\max_{t\in[t_{i-1}^{+},t_{i}^{+}]}\sum_{j=0}^{m}|\ell_{m,i,j}(t)|,\quad i=1,\ldots,L,
\end{equation*}
which turn out to be independent of $i$ as well:
\begin{equation}\label{lebesgue}
\Lambda_{m,i}=\Lambda_{m}:=\max_{t\in[0,1]}\sum_{j=0}^{m}|\ell_{m,j}(t)|.
\end{equation}
Let us define also
\begin{equation}\label{lebesgue'}
\Lambda_{m,i}':=\max_{t\in[t_{i-1}^{+},t_{i}^{+}]}\sum_{j=0}^{m}|\ell_{m,i,j}'(t)|=\Lambda_{m}':=\max_{t\in[0,1]}\sum_{j=0}^{m}|\ell_{m,j}'(t)|.
\end{equation}

Similarly, for $\mathbb{A}$, which concerns the interval $[-1,0]$, we choose
\begin{equation}\label{outmesh-}
\Omega_{L}^{-}:=\{t_{i}^{-}=ih-1\ :\ i=0,1,\ldots,L,\;h=1/L\}\subset[-1,0],
\end{equation}
and
\begin{equation}\label{inmesh-}
\Omega_{L,i}^{-}:=\{t_{i,j}^{-}:=t_{i-1}^{-}+c_{j}h\ :\ j=1,\ldots,m\}\subset[t_{i-1}^{-},t_{i}^{-}],\quad i=1,\ldots,L.
\end{equation}
Correspondingly, we define
\begin{equation}\label{AL}
\mathbb{A}_{L}:=\mathbb{R}^{(1+Lm)\times d}
\end{equation}
with indexing
\begin{equation}\label{psiL}
\psi_{L}:=(\psi_{1,0},\psi_{1,1},\ldots,\psi_{1,m},\ldots,\psi_{L,1}\ldots,\psi_{L,m})^{T};
\end{equation}
for $\psi\in\mathbb{A}$,
\begin{equation}\label{rL-}
\rho_{L}^{-}\psi:=(\psi(-1),\psi(t_{1,1}^{-}),\ldots,\psi(t_{1,m}^{-}),\ldots,\psi(t_{L,1}^{-})\ldots,\psi(t_{L,m}^{-}))^{T}\in\mathbb{A}_{L}
\end{equation}
and, for $\psi_{L}\in\mathbb{A}_{L}$, $\pi_{L}^{-}\psi_{L}\in\mathbb{A}$ as the unique element of the space
\begin{equation}\label{PiLm-}
\Pi_{L,m}^{-}:=\{p\in C([-1,0],\mathbb{R}^d)\ :\ p|_{[t_{i-1}^{-},t_{i}^{-}]}\in\Pi_m,\;i=1,\ldots,L\}
\end{equation}
such that
\begin{equation}\label{pL-}
\pi_{L}^{-}\psi_{L}(-1)=\psi_{1,0},\quad\pi_{L}^{-}\psi_{L}(t_{i,j}^{-})=\psi_{i,j},\quad j=1,\ldots,m,\;i=1,\ldots,L.
\end{equation}
Elements in $\Pi_{L,m}^{-}$ are represented in the same way as those of $\Pi_{L,m}^{+}$ by suitably adapting both \eqref{lagrange} and \eqref{t0+}, so that also $\Lambda_{m}$ in \eqref{lebesgue} and $\Lambda_{m}'$ in \eqref{lebesgue'} are unchanged.
\begin{remark}\label{r_primary}
Let us note that more general choices can be made like, for instance, nonuniform outer meshes, inner meshes varying with respect to the relevant interval, different type and number of nodes and so forth. Extensions to these cases is straightforward but rather technical, so that we omit the details. Observe, however, that in practical applications {\it adaptive} meshes represent a standard, see, e.g., \cite{elir00} for delay differential equations or even \cite{acr81} for ordinary differential equations. Moreover, abscissae including the extrema of $[0,1]$ can also be considered, paying attention to put the correct constraints at the internal outer nodes, i.e., $t_{i}^{\pm}$ for $i=1,\ldots,L-1$.
\end{remark}
\begin{remark}\label{r_rangePR}
Let us underline that the range of the prolongation operator $\pi_{L}^{+}$ is contained in $C([0,1],\mathbb{R}^d)$, hence it does not cover all of $\mathbb{U}$. Nevertheless, later on and in the convergence analysis of Section \ref{s_convergence} it will be clear that $\pi_{L}^{+}\rho_{L}^{+}$ is applied only to functions which are at least continuous.
\end{remark}
\noindent In Appendix \ref{s_basic} we collect some classical results on interpolation in terms of the primary discretization just introduced, which are frequently used in the proofs of the forthcoming results. Other preparatory results are collected in Appendix \ref{s_other}.
\begin{remark}\label{r_FEMSEM}
As far as the convergence of the primary discretization is concerned later on, we soon underline that we develop all the analysis for the FEM, i.e., under the hypothesis of letting $L\rightarrow\infty$ while keeping $m$ fixed. This is also the traditional approach followed in practical implementations, as, e.g., in \texttt{MatCont} for ordinary differential equations \cite{matcont} or in \texttt{DDE-Biftool} for delay differential equations \cite{ddebiftool}, usually combined with an adaptive selection of the outer mesh. On the other hand, the convergence of the SEM, i.e., letting $m\rightarrow\infty$ for fixed $L$, is only briefly accounted for in Section \ref{s_SEM}. We anticipate that we give only some insight because this approach is out of our current primary interest, since it is not widely used in practical applications. However, it is not yet clear whether the convergence of the SEM is guaranteed under the general framework of reference for the current work.
\end{remark}

\bigskip
The secondary discretization consists in defining, for a given level of discretization $M$, an operator $\mathcal{F}_{M}$ that can be exactly computed, and is meant to be used in place of $\mathcal{F}$ in the first of \eqref{FB2}. In particular, we define $\mathcal{F}_{M}$ through an approximated version $G_{M}$ of the right-hand side $G$ of \eqref{rfdes} as $\mathcal{F}_{M}(u,\psi,\omega):=\omega G_{M}(\mathcal{G}(u,\psi)_{\cdot}\circ s_{\omega})$. Correspondingly, $\Phi_{M}$ is the operator obtained by replacing $\mathcal{F}$ in $\Phi$ in \eqref{Phi2} with its approximated version, i.e., $\Phi_{M}:\mathbb{U}\times\mathbb{A}\times\mathbb{B}\rightarrow\mathbb{U}\times\mathbb{A}\times\mathbb{B}$ defined by
\begin{equation}\label{PhiM}
\Phi_{M}(u,\psi,\omega):=
\begin{pmatrix}
\omega G_{M}(\mathcal{G}(u,\psi)_{\cdot}\circ s_{\omega})\\[2mm]
\mathcal{G}(u,\psi)_{1}\\[2mm]
\omega-p(\mathcal{G}(u,\psi)\vert_{[0,1]})
\end{pmatrix}.
\end{equation}
The need for introducing $G_{M}$ is due, for instance, to the presence in $G$ of integrals defining distributed delays, which might need indeed the application of suitable qua\-drature rules. A secondary discretization for $\mathcal{G}$ in \eqref{Phi2} is instead unnecessary, since it can be evaluated exactly in $\pi_{L}^{+}\mathbb{U}_{L}\times\pi_{L}^{-}\mathbb{A}_{L}$ according to \eqref{UL} and \eqref{AL}. Similarly, we assume that the operator $p$ defining the phase condition in \eqref{bvp2} can be evaluated exactly in $\mathcal{G}(\pi_{L}^{+}\mathbb{U}_{L},\pi_{L}^{-}\mathbb{A}_{L})|_{[0,1]})$. Let us note that in the case of integral phase conditions the latter statement translates into applying the piecewise quadrature based on the mesh of the primary discretization, which is indeed the standard approach used in practical applications.

\bigskip
The two discretizations together allow us to define the discrete version 
\begin{equation}\label{PhiLM1}
\Phi_{L,M}:=R_{L}\Phi_{M} P_{L}:\mathbb{U}_{L}\times\mathbb{A}_{L}\times\mathbb{B}\rightarrow\mathbb{U}_{L}\times\mathbb{A}_{L}\times\mathbb{B}
\end{equation}
of the fixed point operator $\Phi$ in \eqref{Phi2} as
\begin{equation*}
\Phi_{L,M}(u_{L},\psi_{L},\omega):=
\begin{pmatrix}
\omega\rho_{L}^{+}G_{M}(\mathcal{G}(\pi_{L}^{+}u_{L},\pi_{L}^{-}\psi_{L})_{\cdot}\circ s_{\omega})\\[2mm]
\rho_{L}^{-}\mathcal{G}(\pi_{L}^{+}u_{L},\pi_{L}^{-}\psi_{L})_1\\[2mm]
\omega-p(\mathcal{G}(\pi_{L}^{+}u_{L},\pi_{L}^{-}\psi_{L})\vert_{[0,1]})
\end{pmatrix}.
\end{equation*}
Fixed points $(u_{L,M}^{\ast},\psi_{L,M}^{\ast},\omega_{L,M}^{\ast})$ of $\Phi_{L,M}$ can be found by standard solvers for nonlinear systems of algebraic equations. Then, in Section \ref{s_convergence}, we consider the prolongation $P_{L}(u_{L,M}^{\ast},\psi_{L,M}^{\ast},\omega_{L,M}^{\ast})$ as an approximation of a fixed point $(u^{\ast},\psi^{\ast},\omega^{\ast})$ of $\Phi$ in \eqref{Phi2} and, correspondingly, $v_{L,M}^{\ast}:=\mathcal{G}(\pi_{L}^{+}u_{L,M}^{\ast},\pi_{L}^{-}\psi_{L,M}^{\ast})$ as an approximation of the solution $v^{\ast}=\mathcal{G}(u^{\ast},\psi^{\ast})$ of \eqref{bvp2}.
\subsection{Validation of the numerical assumptions}
\label{s_validation_n}
We now proceed to prove the validity of the numerical assumptions in \cite{mas15NM}  mentioned at the beginning of Section \ref{s_discretization}. As done in Section \ref{s_validation_t}, for ease of reference throughout the text, we collect below all the hypotheses that are used for proving the forthcoming results.
\begin{enumerate}[label=(N\arabic*),ref=(N\arabic*)]
\item\label{N1} The primary discretization of the space $\mathbb{U}$ is based on the choices \eqref{outmesh+}--\eqref{pL+}.
\item\label{N2} The primary discretization of the space $\mathbb{A}$ is based on the choices \eqref{outmesh-}--\eqref{pL-}.
\item\label{N3} For every positive integer $M$, $G_{M}$ is Fr\'echet-differentiable at every $\mathtt{y}\in\mathtt{Y}$.
\item\label{N4} For every positive integer $M$, $G_{M}\in\mathcal{C}^{1}(\mathtt{Y},\mathbb{R}^{d})$ in the sense of Fr\'echet.
\item\label{N5} There exist $r>0$ and $\kappa\geq0$ such that
\begin{equation*}
\|DG_{M}(\mathtt{y})-DG_{M}(v^{\ast}_{t}\circ s_{\omega^{\ast}})\|_{\mathbb{R}^{d}\leftarrow\mathtt{Y}}\leq\kappa\|\mathtt{y}-v^{\ast}_{t}\circ s_{\omega^{\ast}}\|_{\mathtt{Y}}
\end{equation*}
for every $\mathtt{y}\in\overline B(v^{\ast}_{t}\circ s_{\omega^{\ast}},r)$, uniformly with respect to $t\in[0,1]$ and for every positive integer $M$.
\item\label{N6} It holds
\begin{equation*}
\lim_{M\rightarrow\infty}|G_{M}(v^{\ast}_{t}\circ s_{\omega^{\ast}})-G(v^{\ast}_{t}\circ s_{\omega^{\ast}})|=0
\end{equation*}
uniformly with respect to $t\in[0,1]$.
\item\label{N7} It holds
\begin{equation*}
\lim_{M\rightarrow\infty}\|DG_{M}(v^{\ast}_{t}\circ s_{\omega^{\ast}})-DG(v^{\ast}_{t}\circ s_{\omega^{\ast}})\|_{\mathbb{R}^d\leftarrow\mathtt{Y}}=0
\end{equation*}
uniformly with respect to $t\in[0,1]$.
\end{enumerate}
\begin{remark}\label{r_N5}
The uniformity with respect to $M$ of $r$ and $\kappa$ in \ref{N5} may appear restrictive. However, as anticipated, among the main reasons to introduce $G_{M}$ is the quadrature of distributed delays. Thus, if one considers right-hand sides $G$ of the form
\begin{equation*}
G(\psi)=\int_{-\tau}^{0}H(\theta,\psi(\theta))\dd\theta
\end{equation*}
for some integration kernel $H$ with locally Lipschitz continuous derivative with respect to the second argument, then \ref{T5} is satisfied and also \ref{N5} follows from the application of any convergent interpolatory formula. The same argument holds also if
\begin{equation*}
G(\psi)=g\left(\int_{-\tau}^{0}H(\theta)\psi(\theta)\dd\theta\right)
\end{equation*}
for some $g$ with locally Lipschitz continuous derivative and any integration kernel $H$.
\end{remark}

\bigskip
The first assumption to be verified in \cite{mas15NM} is Assumption A$\mathfrak{F}_K\mathfrak{B}_K$ (page 535). Its validity is proved next.
\begin{proposition}\label{p_AfMb}
Under \ref{T1}, \ref{T2} and \ref{N3}, $\mathcal{F}_{M}$ is Fr\'echet-differentiable, from the right with respect to $\omega$, at every point $(v,u,\omega)\in\mathbb{V}\times\mathbb{U}\times(0,+\infty)$ and
\begin{equation*}
D\mathcal{F}_{M}(\hat{v},\hat u,\hat\omega)(v,u,\omega)=\mathfrak{L}_{M}(\cdot;\hat{v},\hat\omega)v_{\cdot}\circ s_{\hat\omega}+\omega \mathfrak{M}_{M}(\cdot;\hat{v},\hat\omega)
\end{equation*}
for $(v,u,\omega)\in\mathbb{V}\times\mathbb{U}\times(0,+\infty)$, where, for $t\in [0,1]$,
\begin{equation}\label{L2M}
\mathfrak{L}_{M}(t;v,\omega):=\omega DG_{M}(v_{t}\circ s_{\omega})
\end{equation}
and
\begin{equation}\label{M2M}
\mathfrak{M}_{M}(t;v,\omega):=G_{M}(v_{t}\circ s_{\omega})-\mathfrak{L}_{M}(t;\hat{v},\hat\omega)v'_{t}\circ s_{\omega}\cdot\frac{s_{\omega}}{\omega}.
\end{equation}
\end{proposition}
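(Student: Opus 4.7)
The plan is to observe that Proposition \ref{p_AfMb} is structurally identical to Proposition \ref{p_Afb}, with $G_M$ playing the role of $G$. The hypotheses match in the right way: \ref{T1} and \ref{T2} fix the same function spaces as before, while \ref{N3} replaces \ref{T3} and asserts exactly the pointwise Fr\'echet-differentiability of $G_M$ on $\mathtt{Y}$. Moreover, the definitions \eqref{L2M} and \eqref{M2M} of $\mathfrak{L}_M$ and $\mathfrak{M}_M$ are obtained from \eqref{L2} and \eqref{M2} by formally substituting $G_M$ for $G$ (and $DG_M$ for $DG$). Therefore my proposal is simply to rerun the proof of Proposition \ref{p_Afb} with $G$ replaced by $G_M$ throughout.

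Concretely, I would aim to verify
\begin{equation*}
\|\mathcal{F}_{M}(\hat v+v,\hat u+u,\hat\omega+\omega)-\mathcal{F}_{M}(\hat v,\hat u,\hat\omega)-D\mathcal{F}_{M}(\hat v,\hat u,\hat\omega)(v,u,\omega)\|_{\mathbb{U}}=o(\|(v,u,\omega)\|_{\mathbb{V}\times\mathbb{U}\times\mathbb{B}})
\end{equation*}
for $\omega>0$. The first step is to write out the left-hand side pointwise in $t\in[0,1]$ exactly as in \eqref{DF21}, using the definitions of $\mathcal{F}_M$, $\mathfrak{L}_M$ and $\mathfrak{M}_M$. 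Next, I invoke \ref{N3} to apply the Fr\'echet expansion of $G_M$ at $\hat v_t\circ s_{\hat\omega}$, obtaining the analogue of \eqref{DF22} with increment $\xi^t:=(\hat v+v)_t\circ s_{\hat\omega+\omega}-\hat v_t\circ s_{\hat\omega}$. Then I split $\xi^t$ exactly as in \eqref{DF23}, using the $B^{1,\infty}$ regularity granted by \ref{T2} together with Taylor's theorem with Peano remainder applied to $\hat v$, and use \eqref{somega}--\eqref{eta} to estimate $\eta(\sigma)=-s_{\hat\omega}(\sigma)\cdot\omega/(\hat\omega+\omega)$.

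Substituting back, all the terms arising from $G_M$ are handled exactly as in the proof of Proposition \ref{p_Afb}: the $\mathcal{O}(\omega\|v\|_{\mathbb{V}})$ cross term and the $o(\omega+\|v\|_{\mathbb{V}})$ remainder dominate the residual, and the conclusion follows from \eqref{normprod}. The right-derivative character with respect to $\omega$ comes, as before, from the fact that $\eta(\sigma)$ has a definite sign for $\omega>0$ and that the time derivatives implicit in $\hat v_t'$ are interpreted from the right. I do not anticipate any substantive obstacle: since no properties of $G$ beyond its pointwise Fr\'echet-differentiability were used in Proposition \ref{p_Afb}, and \ref{N3} supplies this property for $G_M$, the proof transfers without modification. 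The only cosmetic difference is that every occurrence of $G$, $DG$, $\mathfrak{L}_2$, $\mathfrak{M}_2$ in the earlier argument is replaced by $G_M$, $DG_M$, $\mathfrak{L}_M$, $\mathfrak{M}_M$, respectively.
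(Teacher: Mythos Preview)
Your proposal is correct and matches the paper's own proof essentially verbatim: the paper simply states that the proof goes as that of Proposition \ref{p_Afb} after replacing $G$ with $G_{M}$. Your detailed re-tracing of the steps \eqref{DF21}--\eqref{eta} with $G_M$, $DG_M$, $\mathfrak{L}_M$, $\mathfrak{M}_M$ in place of $G$, $DG$, $\mathfrak{L}_2$, $\mathfrak{M}_2$ is exactly the intended argument.
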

\begin{proof}
The proof goes as the one of Proposition \ref{p_Afb}, after replacing $G$ with $G_{M}$.
\end{proof}
\noindent As neither $\mathcal{G}$ nor $p$ is affected by the secondary discretization, Proposition \ref{p_AfMb} guarantees the Fr\'echet-differentiability of the fixed point operator $\Phi_{M}$ in \eqref{PhiM} as stated next.
\begin{corollary}\label{c_DPhiM}
Under \ref{T1}, \ref{T2} and \ref{N3}, $\Phi_{M}$ in \eqref{PhiM} is Fr\'echet-differentiable, from the right with respect to $\omega$, at every $(u,\psi,\omega)\in\mathbb{U}\times\mathbb{A}\times(0,+\infty)$ and
\begin{equation*}
D\Phi_{M}(\hat u,\hat\psi,\hat\omega)(u,\psi,\omega)=\begin{pmatrix}
\mathfrak{L}_{M}(\cdot;\mathcal{G}(\hat u,\hat\psi),\hat\omega)\mathcal{G}(u,\psi)_{\cdot}\circ s_{\hat\omega}+\omega\mathfrak{M}_{M}(\cdot;\mathcal{G}(\hat u,\hat\psi),\hat\omega)\\[2mm]
\mathcal{G}(u,\psi)_{1}\\[2mm]
\omega-p(\mathcal{G}(u,\psi)\vert_{[0,1]})
\end{pmatrix}
\end{equation*}
for $(u,\psi,\omega)\in\mathbb{U}\times\mathbb{A}\times(0,+\infty)$, $\mathfrak{L}_{M}$ in \eqref{L2M} and $\mathfrak{M}_{M}$ in \eqref{M2M}.
\end{corollary}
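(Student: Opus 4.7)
The plan is to mirror exactly the proof of Corollary~\ref{c_DPhi2}, replacing the appeal to Proposition~\ref{p_Afb} with an appeal to the newly established Proposition~\ref{p_AfMb}. First I would observe that $\Phi_{M}$ in \eqref{PhiM} differs from $\Phi$ in \eqref{Phi2} only in its first component, where $\mathcal{F}$ has been substituted by $\mathcal{F}_{M}$; the second component $\mathcal{G}(u,\psi)_{1}$ and the third component $\omega-p(\mathcal{G}(u,\psi)|_{[0,1]})$ are unaltered and, thanks to the linearity of $\mathcal{G}$ (noted just before Corollary~\ref{c_DPhi2}) and of $p$, they are affine in $(u,\psi,\omega)$. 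Hence these two components are Fr\'echet-differentiable globally, with derivatives given by $\mathcal{G}(u,\psi)_{1}$ and $\omega-p(\mathcal{G}(u,\psi)|_{[0,1]})$ themselves.

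Next I would handle the first component, which is exactly $\mathcal{F}_{M}(\mathcal{G}(\hat u,\hat\psi),\hat u,\hat\omega)$ evaluated at the argument $(\hat u,\hat\psi,\hat\omega)\in\mathbb{U}\times\mathbb{A}\times(0,+\infty)$. This is the composition of the linear bounded map $(u,\psi,\omega)\mapsto(\mathcal{G}(u,\psi),u,\omega)$ with $\mathcal{F}_{M}$, and the range of $\mathcal{G}$ is contained in $\mathbb{V}$ under \ref{T2} (as verified in the discussion around Proposition~\ref{p_G} for $\mathcal{G}_{2}$). Under \ref{T1}, \ref{T2} and \ref{N3}, Proposition~\ref{p_AfMb} yields Fr\'echet-differentiability of $\mathcal{F}_{M}$, from the right in the period variable, at every point of $\mathbb{V}\times\mathbb{U}\times(0,+\infty)$. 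The chain rule then delivers Fr\'echet-differentiability of the first component at every $(\hat u,\hat\psi,\hat\omega)\in\mathbb{U}\times\mathbb{A}\times(0,+\infty)$, again from the right in $\omega$ because the inner map is linear in $\omega$.

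Finally, I would read off the explicit expression for the derivative. Applying the chain rule together with Proposition~\ref{p_AfMb}, and recalling that $D\mathcal{G}(\hat u,\hat\psi)(u,\psi)=\mathcal{G}(u,\psi)$ because $\mathcal{G}$ is linear, the first component of $D\Phi_{M}(\hat u,\hat\psi,\hat\omega)(u,\psi,\omega)$ equals
\begin{equation*}
\mathfrak{L}_{M}(\cdot;\mathcal{G}(\hat u,\hat\psi),\hat\omega)\mathcal{G}(u,\psi)_{\cdot}\circ s_{\hat\omega}+\omega\mathfrak{M}_{M}(\cdot;\mathcal{G}(\hat u,\hat\psi),\hat\omega),
\end{equation*}
while the remaining two components reproduce themselves since they are affine. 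Stacking the three blocks gives the claimed formula. There is no real obstacle here: the content has been entirely absorbed into Proposition~\ref{p_AfMb}, and the present statement is a direct corollary via chain rule plus linearity of $\mathcal{G}$ and $p$; the only minor point to keep in mind is the one-sided nature of the $\omega$-derivative, which propagates through the composition because the outer map is one-sidedly differentiable and the inner map is smooth (linear) in $\omega$.
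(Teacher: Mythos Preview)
Your proposal is correct and follows essentially the same approach as the paper: observe that only the first component of $\Phi_{M}$ is nonlinear (the others being linear in $\mathcal{G}$ and $p$), and then invoke Proposition~\ref{p_AfMb} combined with the linearity of $\mathcal{G}$ to obtain the derivative of that component. The paper's proof is simply the one-line remark that one repeats the proof of Corollary~\ref{c_DPhi2} with $G$ replaced by $G_{M}$, which is exactly what you spell out in detail.
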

\begin{proof}
The proof goes as the one of Corollary \ref{c_DPhi2}, after replacing $G$ with $G_{M}$, and therefore $\Phi$ with $\Phi_{M}$.
\end{proof}

\bigskip
The other two assumptions in \cite{mas15NM}, viz. CS1 and CS2 (page 537), represent stability conditions on the chosen discretization. We stress that they concern the operator $P_{L}R_{L}\Phi_{M}$. Note that, differently from $\Phi_{L,M}$ in \eqref{PhiLM1}, $P_{L}R_{L}\Phi_{M}$ is defined on the same space of $\Phi$ in \eqref{Phi2}. Thus we use the former for computing the discrete approximations and the latter to analyze their convergence. This and the relation between all the relevant fixed points are arguments of Section \ref{s_convergence}.

In what follows it is useful to define $\Psi,\Psi_{L,M}:\mathbb{U}\times\mathbb{A}\times\mathbb{B}\rightarrow\mathbb{U}\times\mathbb{A}\times\mathbb{B}$ as
\begin{equation}\label{PsiLM}
\Psi:=I_{\mathbb{U}\times\mathbb{A}\times\mathbb{B}}-\Phi,\qquad\Psi_{L,M}:=I_{\mathbb{U}\times\mathbb{A}\times\mathbb{B}}-P_{L}R_{L}\Phi_{M}.
\end{equation}
Both are Fr\'echet-differentiable, the first thanks to Corollary \ref{c_DPhi2} and the second thanks to Corollary \ref{c_DPhiM} and the linearity of both $P_{L}$ and $R_{L}$. It is also convenient to adopt the abbreviations
\begin{equation}\label{LM*MM*}
\mathfrak{L}_{M}^{\ast}:=\mathfrak{L}_{M}(\cdot;v^{\ast},\omega^{\ast}),\qquad\mathfrak{M}_{M}^{\ast}:=\mathfrak{M}_{M}(\cdot;v^{\ast},\omega^{\ast})
\end{equation}
in accordance with \eqref{L*M*}.

\bigskip
Assumption CS1 in \cite{mas15NM} is somehow the discrete version of A$x^{\ast}1$ therein, here Proposition \ref{p_Ax*1}. It can be proved valid thanks to the following.
\begin{proposition}\label{p_CS1}
Under \ref{T1}, \ref{T2}, \ref{N1}, \ref{N2}, \ref{N3} and \ref{N5}, there exist $r_{1}\in(0,\omega^{\ast})$ and $\kappa\geq0$ such that
\begin{equation*}
\setlength\arraycolsep{0.1em}\begin{array}{rcl}
\|D\Psi_{L,M}(u,\psi,\omega)&-&D\Psi_{L,M}(u^{\ast},\psi^{\ast},\omega^{\ast})\|_{\mathbb{U}\times\mathbb{A}\times\mathbb{B}\leftarrow\mathbb{U}_{2}\times\mathbb{A}\times(0,+\infty)}\\[2mm]
&\leq&\kappa\|(u,\psi,\omega)-(u^{\ast},\psi^{\ast},\omega^{\ast})\|_{\mathbb{U}\times\mathbb{A}\times\mathbb{B}}
\end{array}
\end{equation*}
for all $(u,\psi,\omega)\in \overline{B}((u^{\ast},\psi^{\ast},\omega^{\ast}),r_{1})$ and for all positive integers $L$ and $M$.
\end{proposition}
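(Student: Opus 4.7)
The plan is to exploit the linearity of $R_L$ and $P_L$ in order to separate the two sources of approximation and reduce the claim to a Lipschitz estimate on $D\Phi_M$ that is uniform in $M$, composed with a uniform-in-$L$ operator-norm bound on $P_L R_L$. Indeed, since $\Psi_{L,M}=I_{\mathbb{U}\times\mathbb{A}\times\mathbb{B}}-P_L R_L\Phi_M$ and $P_L,R_L$ are both linear, Corollary \ref{c_DPhiM} together with the chain rule gives
\begin{equation*}
D\Psi_{L,M}(u,\psi,\omega)-D\Psi_{L,M}(u^{\ast},\psi^{\ast},\omega^{\ast})=-P_L R_L\bigl[D\Phi_M(u,\psi,\omega)-D\Phi_M(u^{\ast},\psi^{\ast},\omega^{\ast})\bigr].
\end{equation*}
By submultiplicativity of the operator norm it then suffices to bound the two factors separately.

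For the first factor, the component on $\mathbb{B}$ is the identity; on $\mathbb{U}=B^{\infty}([0,1],\mathbb{R}^{d})$ pointwise evaluation yields $\|\rho_{L}^{+}\|\leq 1$ and the Lebesgue constant \eqref{lebesgue} gives $\|\pi_{L}^{+}u_L\|_{\infty}\leq\Lambda_m\|u_L\|_{\infty}$; on $\mathbb{A}=B^{1,\infty}([-1,0],\mathbb{R}^{d})$ a similar argument, together with the classical piecewise-polynomial interpolation estimates collected in Appendix \ref{s_appendix} (which control the derivative of the interpolant of a function in $B^{1,\infty}$ via $\|\cdot\|_{B^{1,\infty}}$ using only $m$), yields $\|\pi_{L}^{-}\rho_{L}^{-}\psi\|_{B^{1,\infty}}\leq C_m\|\psi\|_{B^{1,\infty}}$. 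Together these produce a constant $C>0$, depending only on $m$, such that $\|P_L R_L\|_{\mathbb{U}\times\mathbb{A}\times\mathbb{B}\leftarrow\mathbb{U}\times\mathbb{A}\times\mathbb{B}}\leq C$ uniformly in $L$.

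For the second factor, one replays the proof of Proposition \ref{p_Ax*1} verbatim after substituting $G_M$, $DG_M$, $\mathfrak{L}_M^{\ast}$, $\mathfrak{M}_M^{\ast}$ for $G$, $DG$, $\mathfrak{L}_2^{\ast}$, $\mathfrak{M}_2^{\ast}$ throughout. Hypothesis \ref{N5} plays exactly the role of \ref{T5}, providing a radius $r>0$ and a constant $\kappa\geq 0$ that are \emph{independent of} $M$, so that the decompositions analogous to \eqref{ABC} and \eqref{BDE}, as well as the mean value estimate used to control the analogue of the term \eqref{Q}, proceed with $M$-independent constants. The solution $v^{\ast}$, its derivative ${v^{\ast}}'$, the period $\omega^{\ast}$ and the uniform-continuity argument establishing $\kappa_{2,1}$ in \eqref{K21} are unaffected by the passage from $G$ to $G_M$, because $\kappa_{2,1}$ now becomes a bound of $DG_M$ at the states of $v^{\ast}$, which is uniform in $M$ thanks to \ref{N5} (by comparing $DG_M$ to $DG$ via the triangle inequality, for instance, but here not even needed since \ref{N5} alone is enough). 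The outcome is the existence of $r'\in(0,\omega^{\ast})$ and $\kappa'\geq 0$, both independent of $M$, such that $D\Phi_M$ is $\kappa'$-Lipschitz on $\overline{B}((u^{\ast},\psi^{\ast},\omega^{\ast}),r')$. Combining this with the operator bound from the previous paragraph yields the thesis with $r_1=r'$ and $\kappa=C\kappa'$, uniform in both $L$ and $M$.

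The main obstacle is conceptually the uniformity in $M$ of the Lipschitz argument: it would fail without \ref{N5}, but that hypothesis is exactly tailored to make the proof of Proposition \ref{p_Ax*1} lift unchanged. The uniformity in $L$ is less delicate and is the standard consequence of working in the FEM regime with fixed polynomial degree $m$, so that all relevant Lebesgue-type constants remain bounded.
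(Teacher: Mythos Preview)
Your proposal is correct and follows essentially the same approach as the paper: reduce to a Lipschitz estimate for $D\Phi_M$ uniform in $M$ by rerunning the proof of Proposition~\ref{p_Ax*1} with $G_M$ in place of $G$ (using \ref{N5} in the role of \ref{T5}), and then absorb the primary discretization through the uniform-in-$L$ bound $\|P_LR_L\|\leq\max\{\Lambda_m+\Lambda_m',1\}$ from Corollary~\ref{c_PLRL}. The paper chooses $r_1=r/2$ and $\kappa=\kappa_1\cdot\max\{\Lambda_m+\Lambda_m',1\}$, matching your $r_1=r'$ and $\kappa=C\kappa'$.
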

\begin{proof}
By following the proof of Proposition \ref{p_Ax*1}, after replacing $G$ with $G_{M}$, and therefore $\Phi$ with $\Phi_{M}$, we get that there exist $r_{1}\in(0,\omega^{\ast})$ and $\kappa_{1}\geq0$ such that
\begin{equation*}
\setlength\arraycolsep{0.1em}\begin{array}{rcl}
\|D\Phi_{M}(u,\psi,\omega)&-&D\Phi_{M}(u^{\ast},\psi^{\ast},\omega^{\ast})\|_{\mathbb{U}\times\mathbb{A}\times\mathbb{B}\leftarrow\mathbb{U}\times\mathbb{A}\times(0,+\infty)}\\[2mm]
&\leq&\kappa_{1}\|(u,\psi,\omega)-(u^{\ast},\psi^{\ast},\omega^{\ast})\|_{\mathbb{U}\times\mathbb{A}\times\mathbb{B}}
\end{array}
\end{equation*}
for all $(u,\psi,\omega)\in\overline{B}((u^{\ast},\psi^{\ast},\omega^{\ast}),r_{1})$. In particular, we recall that we can choose $r_{1}=r/2$ for $r$ in \ref{N5}. By Corollary \ref{c_PLRL} in Appendix \ref{s_basic}, the thesis follows directly from the second of \eqref{PsiLM} by choosing $\kappa=\kappa_{1}\cdot\max\{\Lambda_{m}+\Lambda'_{m},1\}$.
\end{proof}
\begin{remark}\label{r_KLM}
Note that $\kappa$ is independent of $L$ thanks to \eqref{PLRL}; rather, it depends on $m$. Moreover, it is also independent of $M$ under \ref{N5}.
\end{remark}

\bigskip
Assumption CS2 in \cite{mas15NM} (page 537) is, in a certain sense, the discrete version of A$x^{\ast}2$ therein, here Proposition \ref{p_Ax*2}. As for the latter, its proof is not immediate. Hence we separate the result into several steps, the main one concerning the invertibility of $D\Psi_{L,M}(u^{\ast},\psi^{\ast},\omega^{\ast})$. In principle, one could attempt to prove it by resorting to the Banach's perturbation lemma, thus showing first that
\begin{equation*}
\lim_{L,M\rightarrow\infty}\|D\Psi_{L,M}(u^{\ast},\psi^{\ast},\omega^{\ast})-D\Psi(u^{\ast},\psi^{\ast},\omega^{\ast})\|_{\mathbb{U}\times\mathbb{A}\times\mathbb{B}\leftarrow\mathbb{U}\times\mathbb{A}\times(0,+\infty)}=0.
\end{equation*}
This in turn would require
\begin{equation*}
\lim_{L\rightarrow\infty}\|(I_{\mathbb{A}}-\pi_{L}^{-}\rho_{L}^{-})\mathcal{G}(u,\psi)_{1}\|_{\mathbb{A}}=0
\end{equation*}
through \eqref{RL}, \eqref{PL}, \eqref{PhiM} and \eqref{PsiLM}. The latter cannot hold for all $(u,\psi)\in\mathbb{U}\times\mathbb{A}$ given the choices of both $\mathbb{U}$ and $\mathbb{A}$ in \ref{T2} due to the second of \eqref{normB1inf}. However, the Banach's perturbation lemma, although of common use, represents just a sufficient criterion (yet fundamental: indeed we make use of it several times). Therefore, in the following we prove the invertibility of $D\Psi_{L,M}(u^{\ast},\psi^{\ast},\omega^{\ast})$ directly by following the lines of the proof of Proposition \ref{p_Ax*2}. To this aim, we first need to show that the initial value problem for
\begin{equation}\label{LrfdetLM}
y'(t)=[\pi_{L}^{+}\rho_{L}^{+}\mathfrak{L}_{M}^{\ast}y_{\cdot}\circ s_{\omega^{\ast}}](t)
\end{equation}
is well-posed, and thus defining an associated evolution operator $T_{L,M}^{\ast}(t,s):Y\rightarrow Y$ is meaningful (for $t,s\in[0,1]$ and $t\geq s$). In what follows it is also convenient to use the abbreviations
\begin{equation}\label{GKKM}
\setlength\arraycolsep{0.1em}\begin{array}{ll}
\mathcal{G}^{+}u:=\mathcal{G}(u,0),&\qquad\mathcal{G}^{-}\psi:=\mathcal{G}(0,\psi),\\[2mm]
\mathcal{K}^{\ast,+}u:=\mathfrak{L}^{\ast}(\mathcal{G}^{+}u)_{\cdot}\circ s_{\omega^{\ast}},&\qquad\mathcal{K}^{\ast,-}\psi:=\mathfrak{L}^{\ast}(\mathcal{G}^{-}\psi)_{\cdot}\circ s_{\omega^{\ast}},\\[2mm]
\mathcal{K}_{M}^{\ast,+}u:=\mathfrak{L}_{M}^{\ast}(\mathcal{G}^{+}u)_{\cdot}\circ s_{\omega^{\ast}},&\qquad\mathcal{K}_{M}^{\ast,-}\psi:=\mathfrak{L}_{M}^{\ast}(\mathcal{G}^{-}\psi)_{\cdot}\circ s_{\omega^{\ast}}.
\end{array}
\end{equation}
\begin{lemma}\label{l_LrfdeLM}
Under \ref{T1}, \ref{T2}, \ref{T4}, \ref{N1}, \ref{N2}, \ref{N4} and \ref{N7}, there exist positive integers $\overline L$ and $\overline M$ such that, for every $L\geq\overline L$ and every $M\geq\overline M$, the initial value problem
\begin{equation}\label{LrfdetMLIVP}
\left\{\setlength\arraycolsep{0.1em}\begin{array}{l}
y'(t)=[\pi_{L}^{+}\rho_{L}^{+}\mathfrak{L}_{M}^{\ast}y_{\cdot}\circ s_{\omega^{\ast}}](t),\quad t\in[0,1],\\[2mm]
y_{0}=\psi
\end{array}
\right.
\end{equation}
for $\psi\in Y$ has a unique solution $y_{L,M}$. 
\end{lemma}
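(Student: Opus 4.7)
The plan is to proceed by the method of steps on the outer mesh $\Omega_L^+$, reducing \eqref{LrfdetMLIVP} on each subinterval $[t_{i-1}^+, t_i^+]$ to a linear algebraic system for the unknowns $f_{i,j} := y'(t_{i,j}^+) \in \mathbb{R}^d$, $j = 1,\ldots,m$. By the shape of the right-hand side, any solution must satisfy $y' \in \Pi_{L,m}^+$, so on each subinterval the Lagrange representation \eqref{lagrange} gives $y'(t) = \sum_{j=0}^m \ell_{m,i,j}(t) f_{i,j}$; the value $f_{i,0} = y'(t_{i-1}^+)$ is known from the previous step by evaluation of the preceding polynomial piece at its right endpoint (or, for $i = 1$, from $f_{1,0} = \mathfrak{L}_M^*(0)\,\psi \circ s_{\omega^*}$, obtained by evaluating the equation at $t = 0$). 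The remaining collocation equations at the inner nodes read $f_{i,j} = \mathfrak{L}_M^*(t_{i,j}^+)\, y_{t_{i,j}^+}\circ s_{\omega^*}$, $j = 1,\ldots,m$, and the task is to show that this system admits a unique solution provided $L$ and $M$ are large enough.

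To this end, I would separate in each state $y_{t_{i,j}^+}$ the portion that is already known from the portion that still involves the current unknowns. The former gathers values of $y$ on earlier subintervals or values of $\psi$, and has been fully determined by the previous $i-1$ steps. The latter concerns times $s \in [t_{i-1}^+, t_{i,j}^+]$, where integration of $y'$ against the Lagrange basis yields
\begin{equation*}
y(s) = y(t_{i-1}^+) + \sum_{j'=0}^{m} \left(\int_{t_{i-1}^+}^{s} \ell_{m,i,j'}(r)\dd r\right) f_{i,j'},
\end{equation*}
a linear combination of the $f_{i,j'}$'s with coefficients of size $O(h)$, since the interval of integration has length at most $h = 1/L$ and the Lagrange basis is uniformly bounded (Appendix \ref{s_basic}). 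Applying $\mathfrak{L}_M^*(t_{i,j}^+)$, whose operator norm is uniformly bounded in $t \in [0,1]$ and in $M \geq \overline M$ by \ref{N7} combined with the continuity of $t \mapsto DG(v^*_t \circ s_{\omega^*})$ guaranteed by \ref{T4}, the collocation equations take the block form
\begin{equation*}
(I_{md} - h A_{L,M,i})\, F_i = b_{L,M,i},
\end{equation*}
where $F_i = (f_{i,1},\ldots,f_{i,m})^T$, $\|A_{L,M,i}\|$ is bounded by some constant $C$ independent of $i$, of $L$, and of $M \geq \overline M$, and $b_{L,M,i}$ collects all the already-known contributions.

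Choosing $\overline L$ so that $C/\overline L < 1$ then ensures that $I_{md} - h A_{L,M,i}$ is invertible via Neumann series for every $L \geq \overline L$, yielding a unique $F_i$ and hence, by integration, a unique $y$ on $[t_{i-1}^+, t_i^+]$. By induction on $i = 1,\ldots,L$, one obtains the unique solution $y_{L,M}$ on all of $[0,1]$ for every $L \geq \overline L$ and $M \geq \overline M$. The main obstacle I foresee is the uniform (in $M$) control of $\|A_{L,M,i}\|$: the state $y_{t_{i,j}^+}$ may span several past subintervals and mix values of $\psi$ with values computed in earlier steps, but these are exactly the terms absorbed into $b_{L,M,i}$, while only the $O(h)$ contributions coming from the current subinterval enter $A_{L,M,i}$; it is precisely \ref{N7} that prevents the bound on the operator norm of $\mathfrak{L}_M^*$ from deteriorating as $M$ grows.
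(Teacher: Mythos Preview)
Your argument is correct but follows a genuinely different route from the paper. The paper recasts \eqref{LrfdetMLIVP} as the operator equation
\begin{equation*}
u=\pi_{L}^{+}\rho_{L}^{+}\mathcal{K}_{M}^{\ast,+}u+\pi_{L}^{+}\rho_{L}^{+}\mathcal{K}_{M}^{\ast,-}\psi
\end{equation*}
in $\mathbb{U}$ (with $u=y'$ and the abbreviations of \eqref{GKKM}), and obtains invertibility of $I_{\mathbb{U}}-\pi_{L}^{+}\rho_{L}^{+}\mathcal{K}_{M}^{\ast,+}$ via Banach's perturbation lemma: the unperturbed operator $I_{\mathbb{U}}-\mathcal{K}^{\ast,+}$ is invertible because the continuous initial value problem for \eqref{Lrfdet2} is well-posed under \ref{T4}, and the required norm convergence $\|\pi_{L}^{+}\rho_{L}^{+}\mathcal{K}_{M}^{\ast,+}-\mathcal{K}^{\ast,+}\|_{\mathbb{U}\leftarrow\mathbb{U}}\to0$ is supplied by Lemma \ref{l_prKMtoK}. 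Your approach is instead constructive: the method of steps reduces the problem on each subinterval to an $md\times md$ linear system, whose matrix is an $O(h)$-perturbation of the identity and hence invertible by a finite-dimensional Neumann series once $h$ is small. The paper's route is shorter and reuses Lemma \ref{l_prKMtoK}, which is needed anyway in the proofs of Lemma \ref{l_TLMtoT} and Lemma \ref{l_DPsiLMinv}; your route, by contrast, does not invoke the well-posedness of the limiting continuous problem at all and delivers an explicit threshold $\overline L$ in terms of $m$, the Lebesgue constant $\Lambda_{m}$, and the uniform bound on $\|\mathfrak{L}_{M}^{\ast}(t)\|_{\mathbb{R}^{d}\leftarrow\mathtt{Y}}$.
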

\begin{proof}
Set $u(t):=y'(t)$ for $t\in[0,1]$ and use $y=\mathcal{G}(u,\psi)$ according to \eqref{G2}. By virtue of \eqref{GKKM}, \eqref{LrfdetMLIVP} becomes
\begin{equation*}
u=\pi_{L}^{+}\rho_{L}^{+}\mathcal{K}_{M}^{\ast,+}u+\pi_{L}^{+}\rho_{L}^{+}\mathcal{K}_{M}^{\ast,-}\psi.
\end{equation*}
Well-posedness is thus equivalent to the invertibility of $I_{\mathbb{U}}-\pi_{L}^{+}\rho_{L}^{+}\mathcal{K}_{M}^{\ast,+}:\mathbb{U}\rightarrow\mathbb{U}$, for which we resort to the Banach's perturbation lemma, since the invertibility of $I_{\mathbb{U}}-\mathcal{K}^{\ast,+}:\mathbb{U}\rightarrow\mathbb{U}$ is guaranteed by the well-posedness of the initial value problem for \eqref{Lrfdet2} under \ref{T4}. The thesis follows by \eqref{pr+KM+toK+} in Lemma \ref{l_prKMtoK}.
\end{proof} 
\begin{lemma}\label{l_TLMtoT}
Under \ref{T1}, \ref{T2}, \ref{T4}, \ref{N1}, \ref{N2}, \ref{N4} and \ref{N7},
\begin{equation}\label{TLMtoT}
\lim_{L,M\rightarrow\infty}\|T_{L,M}^{\ast}(t,s)-T^{\ast}(t,s)\|_{Y\leftarrow Y}=0
\end{equation}
uniformly with respect to $t,s\in[0,1]$, $t\geq s$. If, in addition, $1\in\sigma(T^{\ast}(1,0))$ is simple with eigenfunction $\varphi$ normalized as $\|\varphi\|_{Y}=1$ and $r>0$ is such that $1$ is the only eigenvalue of $T^{\ast}(1,0)$ in $\overline B(1,r)\subset\mathbb{C}$, then there exist positive integers $\overline L$ and $\overline M$ such that, for every $L\geq\overline L$ and every $M\geq\overline M$, $T_{L,M}^{\ast}(1,0)$ has only a simple eigenvalue $\mu_{L,M}$ in $\overline B(1,r)$ and, moreover,
\begin{equation}\label{muLMphiLM}
\lim_{L,M\rightarrow\infty}|\mu_{L,M}-1|=0,\qquad\lim_{L,M\rightarrow\infty}\|\varphi_{L,M}-\varphi\|_{Y}=0,
\end{equation}
where $\varphi_{L,M}$ is the eigenfunction associated to $\mu_{L,M}$ normalized as $\|\varphi_{L,M}\|_{Y}=1$.
\end{lemma}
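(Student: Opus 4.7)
The plan is to establish the two claims of the lemma in sequence. For the uniform norm convergence \eqref{TLMtoT}, my approach is as follows. For $\psi\in Y$ and $s\in[0,1]$, let $y$ and $y_{L,M}$ be the solutions of the initial value problem for \eqref{Lrfdet2} and of \eqref{LrfdetMLIVP}, respectively, both starting from state $\psi$ at time $s$. Setting $u:=y'|_{[s,1]}$ and $u_{L,M}:=y_{L,M}'|_{[s,1]}$ and using the Green operator \eqref{G2} to write $y=\mathcal{G}(u,\psi)$, a subtraction of the two fixed-point equations yields for $u-u_{L,M}$ a linear equation whose forcing term contains $(\mathfrak{L}^{\ast}-\pi_{L}^{+}\rho_{L}^{+}\mathfrak{L}_{M}^{\ast})(\cdot)$, tending to zero in operator norm by \eqref{pr+KM+toK+} in Lemma \ref{l_prKMtoK}. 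A Gronwall-type inequality combined with the boundedness of $\mathcal{G}$ (Proposition \ref{p_G}) then produces an estimate of $\|y_{t}-y_{L,M,t}\|_{Y}$ in terms of $\|\psi\|_{Y}$ times an infinitesimal factor; taking the supremum over $\psi$ yields \eqref{TLMtoT}. Uniformity in $s\leq t$ follows because the Gronwall factor depends only on $|t-s|\leq 1$ and on quantities that are already uniformly controlled.

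For the spectral perturbation statement, I would invoke classical results on the convergence of spectra of compact operators under norm convergence. The monodromy operator $T^{\ast}(1,0)$ is compact by the standard theory of linear RFDEs with delay $r\leq 1$. Moreover, $T_{L,M}^{\ast}(1,0)$ has finite-dimensional range, since any solution of \eqref{LrfdetMLIVP} has derivative lying in $\pi_{L}^{+}\mathbb{U}_{L}\subset\Pi_{L,m}^{+}$ from \eqref{PiLm+}. Choose a circle $\Gamma$ around $1$ contained in $\overline B(1,r)$ and in the resolvent set of $T^{\ast}(1,0)$ save for the eigenvalue $1$, and consider the Riesz spectral projectors
\begin{equation*}
P:=\frac{1}{2\pi i}\oint_{\Gamma}(zI_{Y}-T^{\ast}(1,0))^{-1}\,dz,\qquad P_{L,M}:=\frac{1}{2\pi i}\oint_{\Gamma}(zI_{Y}-T_{L,M}^{\ast}(1,0))^{-1}\,dz.
\end{equation*}
By \eqref{TLMtoT} and the Banach perturbation lemma, for $L,M$ large enough the resolvents of $T_{L,M}^{\ast}(1,0)$ exist and converge in norm to that of $T^{\ast}(1,0)$, uniformly in $z\in\Gamma$; thus $\|P_{L,M}-P\|_{Y\leftarrow Y}\to 0$. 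Since $P$ has rank one and ranks of spectral projectors are stable under norm convergence, $P_{L,M}$ has rank one as well for $L,M$ sufficiently large, so $T_{L,M}^{\ast}(1,0)$ has exactly one simple eigenvalue $\mu_{L,M}$ in $\overline B(1,r)$. The convergences in \eqref{muLMphiLM} then follow from the norm convergence of the projectors, choosing $\varphi_{L,M}$ as the unit-norm element of $\mathrm{range}(P_{L,M})$ closest to $\varphi$.

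The main obstacle I expect is the first part, namely establishing \eqref{TLMtoT} in operator norm uniformly in $(s,t)$. Although the pointwise estimate of $\pi_{L}^{+}\rho_{L}^{+}\mathfrak{L}_{M}^{\ast}$ against $\mathfrak{L}^{\ast}$ is granted by Lemma \ref{l_prKMtoK}, lifting it to uniform convergence of the full evolution operators requires careful bookkeeping in the Gronwall argument to keep the constants independent of $s$ and of the (pre-limit) perturbation parameters $L,M$. Once \eqref{TLMtoT} is in hand, the spectral perturbation part is essentially an application of the Dunford calculus together with the standard stability of finite-rank spectral projectors under norm convergence.
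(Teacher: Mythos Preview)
Your proposal is correct and lands on essentially the same mathematics as the paper, with one stylistic difference worth noting in the first part.

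For \eqref{TLMtoT}, the paper also reduces to the error $e_{L,M}=u_{L,M}-u$ after writing both solutions through the Green operator, but instead of a Gronwall argument it inverts the operator $I_{\mathbb{U}}-\pi_{L}^{+}\rho_{L}^{+}\mathcal{K}_{M}^{\ast,+}$ directly via the Banach perturbation lemma (using the invertibility of $I_{\mathbb{U}}-\mathcal{K}^{\ast,+}$, which is exactly well-posedness of the IVP for \eqref{Lrfdet2}). This yields a uniform bound on the inverse, and the residuals $r_{L,M}^{\pm}:=(\pi_{L}^{+}\rho_{L}^{+}\mathcal{K}_{M}^{\ast,\pm}-\mathcal{K}^{\ast,\pm})$ applied to $u$ and $\psi$ vanish by Lemma~\ref{l_prKMtoK}, giving operator-norm convergence immediately. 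Your Gronwall route exploits the Volterra structure of $\mathcal{K}^{\ast,+}$ more explicitly and is a perfectly valid alternative; the Banach-perturbation route has the minor advantage that uniformity in $s$ and in $\|\psi\|_{Y}$ falls out of the operator-norm framework without tracking constants through a time integral. For the spectral part, the paper simply cites the spectral-approximation machinery in Chatelin (norm convergence $\Rightarrow$ strongly-stable convergence $\Rightarrow$ eigenvalue and eigenvector convergence), which is precisely the Riesz-projector argument you wrote out by hand; the two are the same result stated at different levels of abstraction.
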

\begin{proof}
We give the proof for $s=0$, the extension to $s\in(0,1)$ being straightforward.

Let $\mathcal{G}(u,\psi)$ be the solution of \eqref{Lrfdet2} exiting from a given $\psi\in Y$, where $u$ satisfies $u=\mathfrak{L}^{\ast}\mathcal{G}(u,\psi)_{\cdot}\circ s_{\omega^{\ast}}$. Correspondingly, thanks to Lemma \ref{l_LrfdeLM}, let $\mathcal{G}(u_{L,M},\psi)$ be {\it the} solution of \eqref{LrfdetLM} exiting from the same $\psi$, where $u_{L,M}$ satisfies
\begin{equation*}
u_{L,M}=\pi_{L}^{+}\rho_{L}^{+}\mathfrak{L}_{M}^{\ast}\mathcal{G}(u_{L,M},\psi)_{\cdot}\circ s_{\omega^{\ast}}.
\end{equation*}
The relevant evolution operators are defined, for $t\in[0,1]$, respectively by
\begin{equation*}
T^{\ast}(t,0)\psi=\mathcal{G}(u,\psi)_{t},\qquad T_{L,M}^{\ast}(t,0)\psi=\mathcal{G}(u_{L,M},\psi)_{t}.
\end{equation*}
By recalling that $\mathcal{G}$ in \eqref{G2} is linear we get $T_{L,M}^{\ast}(t,0)\psi-T^{\ast}(t,0)\psi=\mathcal{G}(u_{L,M}-u,0)_{t}$. Therefore, \eqref{TLMtoT} is equivalent to showing that
\begin{equation}\label{eLM}
\lim_{L,M\rightarrow\infty}\|e_{L,M}\|_{\mathbb{U}}=0
\end{equation}
for $e_{L,M}:=u_{L,M}-u$. By using \eqref{GKKM} we have $u_{L,M}=\pi_{L}^{+}\rho_{L}^{+}\mathcal{K}_{M}^{\ast,+}u_{L,M}+\pi_{L}^{+}\rho_{L}^{+}\mathcal{K}_{M}^{\ast,-}\psi$ and $u=\mathcal{K}^{\ast,+}u+\mathcal{K}^{\ast,-}\psi$. Therefore $e_{L,M}=\pi_{L}^{+}\rho_{L}^{+}\mathcal{K}_{M}^{\ast,+}e_{L,M}+r_{L,M}^{+}+r_{L,M}^{-}$, where $r_{L,M}^{+}:=(\pi_{L}^{+}\rho_{L}^{+}\mathcal{K}_{M}^{\ast,+}-\mathcal{K}^{\ast,+})u$ and $r_{L,M}^{-}:=(\pi_{L}^{+}\rho_{L}^{+}\mathcal{K}_{M}^{\ast,-}-\mathcal{K}^{\ast,-})\psi$. We already showed in the proof of Lemma \ref{l_LrfdeLM} that $I_{\mathbb{U}}-\pi_{L}^{+}\rho_{L}^{+}\mathcal{K}_{M}^{\ast,+}$ is invertible through the Banach's perturbation lemma. By the latter it is also possible to show that $\|(I_{\mathbb{U}}-\pi_{L}^{+}\rho_{L}^{+}\mathcal{K}_{M}^{\ast,+})^{-1}\|_{\mathbb{U}\leftarrow\mathbb{U}}\leq2\|(I_{\mathbb{U}}-\mathcal{K}^{\ast,+})^{-1}\|_{\mathbb{U}\leftarrow\mathbb{U}}$ holds for $L$ and $M$ sufficiently large. Now \eqref{eLM} follows since both $\|r_{L,M}^{+}\|_{\mathbb{U}}\leq\|\pi_{L}^{+}\rho_{L}^{+}\mathcal{K}_{M}^{\ast,+}-\mathcal{K}^{\ast,+}\|_{\mathbb{U}\leftarrow\mathbb{U}}\|u\|_{\mathbb{U}}$ and
\begin{equation*}
\|r_{L,M}^{-}\|_{U}\leq\|\pi_{L}^{+}\rho_{L}^{+}\mathcal{K}_{M}^{\ast,-}-\mathcal{K}^{\ast,-}\|_{\mathbb{U}\leftarrow\mathbb{A}}\|\psi\|_{\mathbb{A}}
\end{equation*}
vanish by Lemma \ref{l_prKMtoK}.

The second part follows from standard results on spectral approximation of linear operators. In particular, \eqref{TLMtoT} implies strongly-stable convergence of $\mu I_{Y}-T_{L,M}^{\ast}(t,0)$ to $\mu I_{Y}-T^{\ast}(t,0)$ for every finite eigenvalue $\mu$ of $T^{\ast}(t,0)$ \cite[Example 3.8 and Theorem 5.22]{chat11} and the latter implies the final statement by \cite[Proposition 5.6 and Theorem 6.7]{chat11}.
\end{proof}

\bigskip
We are now in the position to prove the invertibility of $D\Psi_{L,M}(u^{\ast},\psi^{\ast},\omega^{\ast})$, which represents the first part of CS2 in \cite{mas15NM}. The second part is proved as the final result of this section.
\begin{proposition}\label{p_Ax*2LM}
Under \ref{T1}, \ref{T2}, \ref{T4}, \ref{N1}, \ref{N2}, \ref{N4}, \ref{N6} and \ref{N7}, there exist positive integers $\overline L$ and $\overline M$ such that, for every $L\geq\overline L$ and every $M\geq\overline M$, $D\Psi_{L,M}(u^{\ast},\psi^{\ast},\omega^{\ast})$ is invertible, i.e., for all $(u_{0},\psi_{0},\omega_{0})\in\mathbb{U}\times\mathbb{A}\times\mathbb{B}$ there exists a unique $(u_{L,M},\psi_{L,M},\omega_{L,M})\in\mathbb{U}\times\mathbb{A}\times\mathbb{B}$ such that
\begin{equation}\label{Ax*20LM}
\left\{\setlength\arraycolsep{0.1em}\begin{array}{l}
u_{L,M}=\pi_{L}^{+}\rho_{L}^{+}\mathfrak{L}_{M}^{\ast}\mathcal{G}(u_{L,M},\psi_{L,M})_{\cdot}\circ s_{\omega^{\ast}}+\omega_{L,M}\pi_{L}^{+}\rho_{L}^{+}\mathfrak{M}_{M}^{\ast}+u_{0}\\[2mm]
\psi_{L,M}=\pi_{L}^{-}\rho_{L}^{-}\mathcal{G}(u_{L,M},\psi_{L,M})_{1}+\psi_{0}\\[2mm]
p(\mathcal{G}(u_{L,M},\psi_{L,M})\vert_{[0,1]})=\omega_{0}.
\end{array}
\right.
\end{equation}
\end{proposition}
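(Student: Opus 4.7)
The plan is to adapt the proof of Proposition \ref{p_Ax*2} to the discrete setting, relying on the well-posedness from Lemma \ref{l_LrfdeLM} and the spectral convergence from Lemma \ref{l_TLMtoT}. Setting $v_{L,M} := \mathcal{G}(u_{L,M},\psi_{L,M})$, I would read \eqref{Ax*20LM} as the initial value problem
\[
v_{L,M}'(t) = \bigl[\pi_{L}^{+}\rho_{L}^{+}\bigl(\mathfrak{L}_{M}^{\ast} v_{L,M,\cdot}\circ s_{\omega^{\ast}} + \omega_{L,M}\mathfrak{M}_{M}^{\ast}\bigr)\bigr](t) + u_{0}(t),\quad v_{L,M,0} = \psi_{L,M},
\]
coupled with the periodicity equation $\psi_{L,M} = \pi_{L}^{-}\rho_{L}^{-} v_{L,M,1} + \psi_{0}$ and the phase condition $p(v_{L,M}|_{[0,1]}) = \omega_{0}$. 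By Lemma \ref{l_LrfdeLM} the IVP is well-posed for $L,M$ sufficiently large, and a variation of constants formula analogous to the one in the proof of Proposition \ref{p_Ax*2} yields
\[
v_{L,M,t} = T_{L,M}^{\ast}(t,0)\psi_{L,M} + \omega_{L,M}\zeta_{L,M}^{\ast}(t) + \eta_{L,M}(t),
\]
where $\zeta_{L,M}^{\ast}$ and $\eta_{L,M}$ are the expected convolutions of $T_{L,M}^{\ast}X_{0}$ with $\pi_{L}^{+}\rho_{L}^{+}\mathfrak{M}_{M}^{\ast}$ and $u_{0}$. Substituting into the periodicity condition produces the reduced equation
\[
[I_{\mathbb{A}} - \pi_{L}^{-}\rho_{L}^{-} T_{L,M}^{\ast}(1,0)]\psi_{L,M} = \omega_{L,M}\pi_{L}^{-}\rho_{L}^{-}\zeta_{L,M}^{\ast}(1) + \pi_{L}^{-}\rho_{L}^{-}\eta_{L,M}(1) + \psi_{0},
\]
to be solved jointly with the phase condition for the pair $(\psi_{L,M},\omega_{L,M})$.

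At this point the argument mirrors the kernel-range decomposition in Proposition \ref{p_Ax*2}. Lemma \ref{l_TLMtoT} provides, for $L,M$ large, a simple eigenvalue $\mu_{L,M}\to 1$ of $T_{L,M}^{\ast}(1,0)$ with normalized eigenfunction $\varphi_{L,M}\to\varphi$, while the rest of its spectrum stays uniformly away from $1$. A spectral perturbation argument---exploiting both the operator-norm convergence $T_{L,M}^{\ast}(1,0)\to T^{\ast}(1,0)$ and the fact that the range of $T_{L,M}^{\ast}(1,0)$ inherits enough regularity from the piecewise-polynomial structure of $\pi_{L}^{+}\rho_{L}^{+}$ for the interpolation estimates of Appendix \ref{s_basic} to apply---would transfer this picture to the composite operator $\pi_{L}^{-}\rho_{L}^{-} T_{L,M}^{\ast}(1,0)$: a simple eigenvalue $\nu_{L,M}\to 1$ with eigenfunction $\tilde\varphi_{L,M}\to\varphi$, and the rest of the spectrum isolated from $1$ uniformly in $L,M$. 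This yields the direct-sum decomposition $\mathbb{A} = R_{L,M}\oplus\Span\{\tilde\varphi_{L,M}\}$, on which the two-step procedure of Proposition \ref{p_Ax*2} runs: the component of the reduced equation along $\tilde\varphi_{L,M}$ fixes $\omega_{L,M}$ via a coefficient converging to the nonzero $k_{1}$ of Section \ref{s_nongeneric}; the $R_{L,M}$-component then determines $\psi_{L,M}$ up to a scalar multiple of $\tilde\varphi_{L,M}$, which is in turn fixed by the phase condition, whose relevant coefficient converges to $p(v(\cdot;\varphi)|_{[0,1]})\neq 0$.

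The main obstacle is precisely this spectral transfer from $T_{L,M}^{\ast}(1,0)$ to $\pi_{L}^{-}\rho_{L}^{-} T_{L,M}^{\ast}(1,0)$. Since $\pi_{L}^{-}\rho_{L}^{-}$ does not tend to $I_{\mathbb{A}}$ in $\mathcal{L}(\mathbb{A},\mathbb{A})$---exactly the obstruction highlighted in the paragraph preceding Lemma \ref{l_LrfdeLM}---a bare Banach's perturbation lemma argument on the composite operator fails. The workaround, already exploited in the proof of Lemma \ref{l_LrfdeLM}, is to apply $\pi_{L}^{-}\rho_{L}^{-}$ only after $T_{L,M}^{\ast}(1,0)$ has smoothed the argument so that the interpolation error vanishes on its range, and then to localize on the invariant subspace associated with eigenvalues near $1$. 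Verifying that this localization is enough to preserve simplicity and isolation of the perturbed eigenvalue $\nu_{L,M}$ (and that the limits $k_{L,M,1}\to k_{1}$ and $p(v_{L,M}(\cdot;\tilde\varphi_{L,M})|_{[0,1]})\to p(v(\cdot;\varphi)|_{[0,1]})$ genuinely hold uniformly) is, in my view, the most delicate point of the proof.
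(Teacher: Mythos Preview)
Your setup (IVP reformulation, variation of constants, reduced boundary equation) matches the paper exactly, and you correctly identify the central difficulty: $\pi_{L}^{-}\rho_{L}^{-}$ does not converge to $I_{\mathbb{A}}$ in operator norm, so a direct spectral analysis of the composite operator $\pi_{L}^{-}\rho_{L}^{-}T_{L,M}^{\ast}(1,0)$ is delicate.

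The paper, however, does \emph{not} attempt that spectral transfer. Instead of seeking an eigenpair $(\nu_{L,M},\tilde\varphi_{L,M})$ of the composite operator, it keeps the spectral decomposition attached to $T_{L,M}^{\ast}(1,0)$ alone---for which Lemma~\ref{l_TLMtoT} already gives the simple eigenvalue $\mu_{L,M}\to 1$ with eigenfunction $\varphi_{L,M}\to\varphi$---and algebraically rewrites the reduced equation as
\[
[\mu_{L,M}I_{Y}-T_{L,M}^{\ast}(1,0)]\psi_{L,M}=\omega_{L,M}\xi_{L,M,1}^{\ast}+\xi_{L,M,2}^{\ast}+\nu_{L,M},
\]
where the correction term $\nu_{L,M}:=(\pi_{L}^{-}\rho_{L}^{-}-I_{\mathbb{A}})T_{L,M}^{\ast}(1,0)\psi_{L,M}+(\mu_{L,M}-1)\psi_{L,M}$ absorbs precisely the two obstructions (interpolation error on the range of $T_{L,M}^{\ast}$ and the gap $\mu_{L,M}-1$). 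The decomposition $Y=R_{L,M}\oplus\Span\{\varphi_{L,M}\}$ is then the one associated with $\mu_{L,M}I_{Y}-T_{L,M}^{\ast}(1,0)$, not with the composite operator. The kernel coefficient $k_{L,M,1}$ of $\xi_{L,M,1}^{\ast}$ is shown to converge to the nonzero $k_{1}$ of Section~\ref{s_nongeneric} (this is the content of Proposition~\ref{p_Ax*2LM-Ax*2}), and the extra kernel coefficient $h_{L,M}$ coming from $\nu_{L,M}$ appears only as an additive shift in the formula $\omega_{L,M}=-(k_{L,M,2}+h_{L,M})/k_{L,M,1}$.

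What this buys: the paper never needs to prove that $\pi_{L}^{-}\rho_{L}^{-}T_{L,M}^{\ast}(1,0)$ has a simple isolated eigenvalue near $1$, nor that its spectral projections converge. All spectral information is drawn from $T_{L,M}^{\ast}(1,0)$ via Lemma~\ref{l_TLMtoT}, and the $\pi_{L}^{-}\rho_{L}^{-}$ contribution is treated purely as a forcing term. Your approach could in principle succeed, but the ``spectral transfer'' step you flag as most delicate is genuinely hard in the $\mathbb{A}$-norm (because of the derivative part of $\|\cdot\|_{\mathbb{A}}$) and is simply bypassed by the paper's rewriting.
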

\begin{proof}
We follow the lines of the proof of Proposition \ref{p_Ax*2}. Thus let us treat \eqref{Ax*20LM} as an initial value problem for $v_{L,M}:=\mathcal{G}(u_{L,M},\psi_{L,M})$, i.e.,
\begin{equation}\label{vLM}
\left\{\setlength\arraycolsep{0.1em}\begin{array}{l}
v_{L,M}'(t)=(\pi_{L}^{+}\rho_{L}^{+}\mathfrak{L}_{M}^{\ast}v_{L,M,\cdot}\circ s_{\omega^{\ast}})(t)+\omega_{L,M}\pi_{L}^{+}\rho_{L}^{+}\mathfrak{M}_{M}^{\ast}(t)+u_{0}(t)\\[2mm]
v_{L,M,0}=\psi_{L,M}
\end{array}
\right.
\end{equation}
for $t\in[0,1]$\footnote{$v_{L,M,t}$ is a shortcut for $(v_{L,M})_{t}$, the latter according to \eqref{statet}.}. The variation of constants formula gives, for $t\in[0,1]$,
\begin{equation*}
v_{L,M,t}=T_{L,M}^{\ast}(t,0)\psi_{L,M}+\int_{0}^{t}[T_{L,M}^{\ast}(t,s)X_{0}][\omega_{L,M}\pi_{L}^{+}\rho_{L}^{+}\mathfrak{M}_{M}^{\ast}(s)+u_{0}(s)]\dd s,
\end{equation*}
and the first boundary condition in \eqref{Ax*20LM} returns
\begin{equation*}
\setlength\arraycolsep{0.1em}\begin{array}{rcl}
\psi_{L,M}&=&\pi_{L}^{-}\rho_{L}^{-}T_{L,M}^{\ast}(1,0)\psi_{L,M}\\[2mm]
&&\displaystyle+\pi_{L}^{-}\rho_{L}^{-}\int_{0}^{1}[T_{L,M}^{\ast}(1,s)X_{0}][\omega_{L,M}\pi_{L}^{+}\rho_{L}^{+}\mathfrak{M}_{M}^{\ast}(s)+u_{0}(s)]\dd s+\psi_{0}.
\end{array}
\end{equation*}
For $L$ and $M$ sufficiently large, let $\mu_{L,M}$ be the multiplier of $T_{L,M}^{\ast}(1,0)$ in Lemma \ref{l_TLMtoT} and rewrite the last equation as
\begin{equation}\label{Ax*22LM}
\setlength\arraycolsep{0.1em}\begin{array}{rcl}
\mu_{L,M}\psi_{L,M}&=&T_{L,M}^{\ast}(1,0)\psi_{L,M}\\[2mm]
&&\displaystyle+\pi_{L}^{-}\rho_{L}^{-}\int_{0}^{1}[T_{L,M}^{\ast}(1,s)X_{0}][\omega_{L,M}\pi_{L}^{+}\rho_{L}^{+}\mathfrak{M}_{M}^{\ast}(s)+u_{0}(s)]\dd s+\psi_{0}+\nu_{L,M}
\end{array}
\end{equation}
for
\begin{equation}\label{nuLM}
\nu_{L,M}:=(\pi_{L}^{-}\rho_{L}^{-}-I_{\mathbb{A}})T_{L,M}^{\ast}(1,0)\psi_{L,M}+(\mu_{L,M}-1)\psi_{L,M},
\end{equation}
where we note that under \ref{T2} $T_{L,M}^{\ast}(1,0)\psi_{L,M}=\mathcal{G}(u_{L,M},\psi_{L,M})_{1}\in\mathbb{A}$ since $\mathcal{G}(\mathbb{U},\mathbb{A})$$\subseteq\mathbb{V}$ as already observed after Proposition \ref{p_G}.

Now we have
\begin{equation}\label{decoLM}
Y=R_{L,M}\oplus K_{L,M}
\end{equation}
for $R_{L,M}$ and $K_{L,M}$ the range and the kernel of $\mu_{L,M}I_{Y}-T_{L,M}^{\ast}(1,0)$, respectively. Since $\mu_{L,M}$ is simple, we can set $K_{L,M}=\Span\{\varphi_{L,M}\}$ for $\varphi_{L,M}$ an eigenfunction of the multiplier $\mu_{L,M}$. We can also assume  $p(v(\cdot;\varphi_{L,M})|_{[0,1]})\neq0$ for $v(\cdot;\varphi_{L,M})$ the solution of \eqref{vLM} exiting from $\varphi_{L,M}$ thanks to the linearity of $p$ and to the second of \eqref{muLMphiLM} in Lemma \ref{l_TLMtoT} (recall Remark \ref{r_pfi0}).

From \eqref{Ax*22LM} let us define the elements of $Y$
\begin{equation}\label{xiLM}
\setlength\arraycolsep{0.1em}\begin{array}{rcl}
\xi_{L,M,1}^{\ast}&:=&\displaystyle\pi_{L}^{-}\rho_{L}^{-}\int_0^{1}[T_{L,M}^{\ast}(1,s)X_{0}]\pi_{L}^{+}\rho_{L}^{+}\mathfrak{M}_{M}^{\ast}(s)\dd s,\\[2mm]
\xi_{L,M,2}^{\ast}&:=&\displaystyle\pi_{L}^{-}\rho_{L}^{-}\int_0^{1}[T_{L,M}^{\ast}(1,s)X_{0}]u_{0}(s)\dd s+\psi_{0},
\end{array}
\end{equation}
so that \eqref{Ax*22LM} becomes
\begin{equation}\label{Ax*23LM}
[\mu_{L,M}I_{Y}-T_{L,M}^{\ast}(1,0)]\psi_{L,M}=\omega_{L,M}\xi_{L,M,1}^{\ast}+\xi_{L,M,2}^{\ast}+\nu_{L,M}.
\end{equation}
From \eqref{decoLM} we can write uniquely
\begin{equation}\label{xinuLM}
\setlength\arraycolsep{0.1em}\begin{array}{rcl}
\xi_{L,M,1}^{\ast}&=&r_{L,M,1}+k_{L,M,1}\varphi_{L,M},\\[2mm]
\xi_{L,M,2}^{\ast}&=&r_{L,M,2}+k_{L,M,2}\varphi_{L,M},\\[2mm]
\nu_{L,M}&=&s_{L,M}+h_{L,M}\varphi_{L,M}
\end{array}
\end{equation}
for $r_{L,M,1},r_{L,M,2},s_{L,M}\in R_{L,M}$ and $k_{L,M,1},k_{L,M,2},h_{L,M}\in\mathbb{R}$. Then from \eqref{Ax*23LM} it must be $\omega_{L,M}\xi_{L,M,1}^{\ast}+\xi_{L,M,2}^{\ast}+\nu_{L,M}\in R_{L,M}$, which implies $\omega_{L,M}k_{L,M,1}+k_{L,M,2}+h_{L,M}=0$. Therefore, by assuming $k_{L,M,1}\neq 0$, it follows that
\begin{equation}\label{omegaLM}
\omega_{L,M}=-\frac{k_{L,M,2}+h_{L,M}}{k_{L,M,1}}
\end{equation}
is the only possible solution. Eventually, let $\eta_{L,M}$ be such that
\begin{equation*}
[\mu_{L,M}I_{Y}-T_{L,M}^{\ast}(1,0)]\eta_{L,M}=\omega_{L,M}\xi_{L,M,1}^{\ast}+\xi_{L,M,2}^{\ast}+\nu_{L,M}.
\end{equation*}
Then, every $\psi_{L,M}$ satisfying \eqref{Ax*23LM} can be written as $\eta_{L,M}+\lambda_{L,M}\varphi_{L,M}$ for some $\lambda_{L,M}\in\mathbb{R}$. The value of the latter can be fixed uniquely by imposing the phase condition, i.e., $p(v(\cdot;\eta_{L,M})|_{[0,1]})+\lambda_{L,M}p(v(\cdot;\varphi_{L,M})|_{[0,1]})=\omega_{0}$.

It is left to prove that $k_{L,M,1}\neq 0$ for $L$ and $M$ sufficiently large. By \eqref{k1LMtok1} in Proposition \ref{p_Ax*2LM-Ax*2}, $k_{L,M,1}\rightarrow k_{1}$ for $k_{1}$ in the proof of Proposition \ref{p_Ax*2}. As the latter is proved to be different from $0$ in Section \ref{s_nongeneric}, the same holds for $k_{L,M,1}$ for $L$ and $M$ sufficiently large.
\end{proof}

Eventually, to complete the proof of the validity of CS2 in Proposition \ref{p_CS2b} below, we show next that the inverse of $D\Psi_{L,M}(u^{\ast},\psi^{\ast},\omega^{\ast})$ is bounded uniformly with respect to $L$ and $M$.
\begin{lemma}\label{l_DPsiLMinv}
Under \ref{T1}, \ref{T2}, \ref{T4}, \ref{N1}, \ref{N2}, \ref{N4}, \ref{N6} and \ref{N7}, the inverse of $D\Psi_{L,M}(u^{\ast},\psi^{\ast},\omega^{\ast})$ is uniformly bounded with respect to both $L$ and $M$.
\end{lemma}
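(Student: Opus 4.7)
The plan is to track the construction used in the proof of Proposition~\ref{p_Ax*2LM} and verify that every single step there yields estimates uniform in $L$ and $M$. Concretely, for $(u_0,\psi_0,\omega_0)\in\mathbb{U}\times\mathbb{A}\times\mathbb{B}$ I would produce a constant $C$, independent of $L$ and $M$, such that the unique solution $(u_{L,M},\psi_{L,M},\omega_{L,M})$ of \eqref{Ax*20LM} satisfies $\|(u_{L,M},\psi_{L,M},\omega_{L,M})\|_{\mathbb{U}\times\mathbb{A}\times\mathbb{B}}\leq C\|(u_0,\psi_0,\omega_0)\|_{\mathbb{U}\times\mathbb{A}\times\mathbb{B}}$, which is exactly the required uniform bound on $D\Psi_{L,M}(u^{\ast},\psi^{\ast},\omega^{\ast})^{-1}$.

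The preparatory estimates are routine. The uniform norm convergence in Lemma~\ref{l_TLMtoT} immediately gives $\sup_{L,M,t,s}\|T_{L,M}^{\ast}(t,s)\|_{Y\leftarrow Y}<\infty$, and by the same spectral approximation results already invoked at the end of that proof via \cite{chat11}, the spectral projections $Q_{L,M}$ onto $K_{L,M}$ along $R_{L,M}$ converge in operator norm to the spectral projection $Q$ associated to the simple multiplier~$1$ of $T^{\ast}(1,0)$; in particular $\sup_{L,M}\|Q_{L,M}\|_{Y\leftarrow Y}<\infty$. This makes the decomposition \eqref{xinuLM} uniformly stable: each coefficient $k_{L,M,i}$ and each range component $r_{L,M,i}$ is bounded by a fixed multiple of $\|\xi_{L,M,i}^{\ast}\|_Y$. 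Together with Corollary~\ref{c_PLRL} and \ref{N6} (which gives $\sup_M\|\mathfrak{M}_M^{\ast}\|_{\mathbb{U}}<\infty$), the explicit formulas \eqref{xiLM} then yield $\|\xi_{L,M,1}^{\ast}\|_Y\leq C$ and $\|\xi_{L,M,2}^{\ast}\|_Y\leq C(\|u_0\|_{\mathbb{U}}+\|\psi_0\|_{\mathbb{A}})$. Since $k_{L,M,1}\to k_1\neq 0$ by Proposition~\ref{p_Ax*2LM-Ax*2}, for $L,M$ sufficiently large we have $|k_{L,M,1}|\geq|k_1|/2$, so \eqref{omegaLM} produces $|\omega_{L,M}|\leq C(|k_{L,M,2}|+|h_{L,M}|)$ with $C$ uniform in $L,M$.

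The main obstacle is the implicit dependence of $\nu_{L,M}$ in \eqref{nuLM} on the unknown $\psi_{L,M}$. The two summands defining $\nu_{L,M}$ are both small for $L,M$ large: $|\mu_{L,M}-1|\to 0$ by \eqref{muLMphiLM}, while $T_{L,M}^{\ast}(1,0)\psi_{L,M}=\mathcal{G}(u_{L,M},\psi_{L,M})_1$ belongs to the continuous range of $\mathcal{G}$ (so in particular is at least in $C([-1,0],\mathbb{R}^d)$), making $\|(\pi_L^{-}\rho_L^{-}-I_{\mathbb{A}})T_{L,M}^{\ast}(1,0)\|_{\mathbb{A}\leftarrow Y}$ vanish as $L\to\infty$ uniformly in $M$ by the interpolation estimates collected in Appendix~\ref{s_basic}. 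Therefore one can rewrite \eqref{Ax*23LM} together with \eqref{nuLM} as a fixed-point problem for $\psi_{L,M}$ and, for $L,M$ large enough, solve it by the Banach perturbation lemma; this delivers $\|\eta_{L,M}\|_{\mathbb{A}}\leq C(\|u_0\|_{\mathbb{U}}+\|\psi_0\|_{\mathbb{A}}+|\omega_0|)$ with $C$ independent of $L,M$.

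Finally, the phase equation $p(v(\cdot;\eta_{L,M})|_{[0,1]})+\lambda_{L,M}\,p(v(\cdot;\varphi_{L,M})|_{[0,1]})=\omega_0$ uniquely determines $\lambda_{L,M}$, and its denominator is bounded away from zero because $p(v(\cdot;\varphi_{L,M})|_{[0,1]})\to p(v(\cdot;\varphi)|_{[0,1]})\neq 0$ thanks to the linearity and continuity of $p$, the continuous dependence of the solutions of \eqref{vLM} on the initial data, and \eqref{muLMphiLM}. Consequently $|\lambda_{L,M}|\leq C(\|u_0\|_{\mathbb{U}}+\|\psi_0\|_{\mathbb{A}}+|\omega_0|)$ uniformly in $L,M$. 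Since $\psi_{L,M}=\eta_{L,M}+\lambda_{L,M}\varphi_{L,M}$ and $\|\varphi_{L,M}\|_Y=1$, assembling the three bounds on $\omega_{L,M}$, $\psi_{L,M}$, and (via \eqref{Ax*20LM} and boundedness of $\mathcal{G}$) $u_{L,M}$, produces the desired uniform bound on $D\Psi_{L,M}(u^{\ast},\psi^{\ast},\omega^{\ast})^{-1}$.
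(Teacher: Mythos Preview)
Your route is genuinely different from the paper's. The paper does \emph{not} track the spectral decomposition of Proposition~\ref{p_Ax*2LM}; instead it substitutes $z:=u-u_0$, $\gamma:=\psi-\psi_0$ so that $z$ (and $z_{L,M}$) are continuous, subtracts the continuous and discrete systems to obtain a $2\times2$ block equation for the collocation errors $(e_{L,M}^+,e_{L,M}^-):=(z_{L,M}-z,\gamma_{L,M}-\gamma)$ on $C([0,1],\mathbb{R}^d)\times\mathbb{A}$, and applies the Banach perturbation lemma directly to that block operator (via Lemmas~\ref{l_prKMtoK} and~\ref{l_prG1toG}). The convergence $\omega_{L,M}\to\omega$ and the boundedness of $\psi_{L,M}$ are then imported from Proposition~\ref{p_Ax*2LM-Ax*2}. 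The whole point of the substitution is to land in $C([0,1],\mathbb{R}^d)$ rather than $\mathbb{U}=B^{\infty}$, so that the interpolation errors vanish; the paper remarks just before Lemma~\ref{l_LrfdeLM} that a direct Banach-perturbation argument fails precisely because $\|(I_{\mathbb{A}}-\pi_L^{-}\rho_L^{-})\mathcal{G}(u,\psi)_1\|_{\mathbb{A}}$ need not vanish in general.

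Your argument has a real gap where you assert that $\|(\pi_L^{-}\rho_L^{-}-I_{\mathbb{A}})T_{L,M}^{\ast}(1,0)\|_{\mathbb{A}\leftarrow Y}\to0$. First, the identification $T_{L,M}^{\ast}(1,0)\psi_{L,M}=\mathcal{G}(u_{L,M},\psi_{L,M})_1$ is false: $T_{L,M}^{\ast}$ is the evolution of the \emph{homogeneous} equation~\eqref{LrfdetLM}, while $u_{L,M}$ in~\eqref{Ax*20LM} satisfies an inhomogeneous one. Second, and more importantly, the $\mathbb{A}$-norm carries a derivative term, and Lemma~\ref{l_p-r--I} controls $\|(\pi_L^{-}\rho_L^{-}-I_{\mathbb{A}})g\|_{\mathbb{A}}$ only through the modulus of continuity of $g'$. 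For $\psi$ ranging over the unit ball of $Y=B^{\infty}$ this modulus is \emph{not} uniformly controlled: $(T_{L,M}^{\ast}(1,0)\psi)'(\theta)=\mathfrak{L}_M^{\ast}(1+\theta)$ applied to states that reach back into $[-1,0]$, where $\psi$ need not even be continuous, so $t\mapsto\psi_t$ has no equicontinuity. At best you could aim for $\mathbb{A}\leftarrow\mathbb{A}$ (using $\|\psi'\|_\infty<\infty$), but even that requires the splitting $T_{L,M}^{\ast}=(T_{L,M}^{\ast}-T^{\ast})+T^{\ast}$ together with a uniform equicontinuity estimate for $(T^{\ast}(1,0)\psi)'$ over $\|\psi\|_{\mathbb{A}}\leq1$, which you do not supply. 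Without this operator-norm smallness your Banach-perturbation closure for $\psi_{L,M}$ fails; and since the decomposition $Y=R_{L,M}\oplus K_{L,M}$ lives in $Y$, the further step of lifting the bound on $\eta_{L,M}$ from $Y$ to $\mathbb{A}$ is also missing.
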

\begin{proof}
Proposition \ref{p_Ax*2LM} guarantees that, given $(u_{0},\psi_{0},\omega_{0})\in\mathbb{U}\times\mathbb{A}\times\mathbb{B}$, there exists a unique $(u_{L,M},\psi_{L,M},\omega_{L,M})\in\mathbb{U}\times\mathbb{A}\times\mathbb{B}$ satisfying
\begin{equation}\label{DPdiinv}
D\Psi_{L,M}(u^{\ast},\psi^{\ast},\omega^{\ast})(u_{L,M},\psi_{L,M},\omega_{L,M})=(u_{0},\psi_{0},\omega_{0}).
\end{equation}
We thus need to show that $\|(u_{L,M},\psi_{L,M},\omega_{L,M})\|_{\mathbb{U}\times\mathbb{A}\times\mathbb{B}}$ is bounded uniformly with respect to both $L$ and $M$. To this aim we prove that $(u_{L,M},\psi_{L,M},\omega_{L,M})$ is related to the solution of the collocation of (the secondary discretization of) an equivalent version of \eqref{Ax*20} according to the primary discretization under \ref{N1} and \ref{N2}. Indeed, we first need to rearrange the terms of \eqref{Ax*20} to give a proper sense to the collocation problem since, in general, $u$ is not continuous therein (because of $u_{0}$), while the range of $\pi_{L}^{+}\rho_{L}^{+}$ contains only continuous functions, Remark \ref{r_rangePR}. Consider then
\begin{equation}\label{Ax*23}
\left\{\setlength\arraycolsep{0.1em}\begin{array}{l}
z=\mathfrak{L}^{\ast}\mathcal{G}(z,\gamma)_{\cdot}\circ s_{\omega^{\ast}}+\omega\mathfrak{M}^{\ast}+\mathfrak{L}^{\ast}\mathcal{G}(u_{0},\psi_{0})_{\cdot}\circ s_{\omega^{\ast}}\\[2mm]
\gamma=\mathcal{G}(z,\gamma)_{1}+\mathcal{G}(u_{0},\psi_{0})_{1}\\[2mm]
p(\mathcal{G}(z,\gamma)\vert_{[0,1]})=\omega_{0}-p(\mathcal{G}(u_{0},\psi_{0})\vert_{[0,1]})
\end{array}
\right.
\end{equation}
obtained from \eqref{Ax*20} by setting $z:=u-u_{0}$ and $\gamma:=\psi-\psi_{0}$. Let us observe that $z$ is continuous as it follows from the first equation in \eqref{Ax*23} under \ref{T4}. Similarly, we rewrite \eqref{Ax*20LM} as
\begin{equation}\label{Ax*24LM}
\left\{\setlength\arraycolsep{0.1em}\begin{array}{l}
z_{L,M}=\pi_{L}^{+}\rho_{L}^{+}\mathfrak{L}_{M}^{\ast}\mathcal{G}(z_{L,M},\gamma_{L,M})_{\cdot}\circ s_{\omega^{\ast}}+\omega_{L,M}\pi_{L}^{+}\rho_{L}^{+}\mathfrak{M}_{M}^{\ast}\\[2mm]
\hspace{12mm}+\pi_{L}^{+}\rho_{L}^{+}\mathfrak{L}_{M}^{\ast}\mathcal{G}(u_{0},\psi_{0})_{\cdot}\circ s_{\omega^{\ast}}\\[2mm]
\gamma_{L,M}=\pi_{L}^{-}\rho_{L}^{-}\mathcal{G}(z_{L,M},\gamma_{L,M})_{1}+\pi_{L}^{-}\rho_{L}^{-}\mathcal{G}(u_{0},\psi_{0})_{1}\\[2mm]
p(\mathcal{G}(z_{L,M},\gamma_{L,M})\vert_{[0,1]})=\omega_{0}-p(\mathcal{G}(u_{0},\psi_{0})\vert_{[0,1]})
\end{array}
\right.
\end{equation}
for $z_{L,M}:=u_{L,M}-u_{0}$ and $\gamma_{L,M}:=\psi_{L,M}-\psi_{0}$. It follows that
\begin{equation}\label{uLMpsiLM}
u_{L,M}=e_{L,M}^{+}+u,\qquad\psi_{L,M}=e_{L,M}^{-}+\psi,
\end{equation}
where $e_{L,M}^{+}:=z_{L,M}-z$ and $e_{L,M}^{-}:=\gamma_{L,M}-\gamma$ are the collocation errors of the components in $\mathbb{U}$ and $\mathbb{A}$, respectively, given that $(z_{L,M},\gamma_{L,M},\omega_{L,M})$ is the collocation solution of the secondary discretization of \eqref{Ax*23} according to \ref{N1} and \ref{N2}. By subtracting \eqref{Ax*23} from \eqref{Ax*24LM} we get
\begin{equation}\label{eLM0}
\left\{\setlength\arraycolsep{0.1em}\begin{array}{l}
e_{L,M}^{+}=\pi_{L}^{+}\rho_{L}^{+}\mathfrak{L}_{M}^{\ast}\mathcal{G}(e_{L,M}^{+},e_{L,M}^{-})_{\cdot}\circ s_{\omega^{\ast}}+\varepsilon_{\omega,L,M}+\varepsilon_{L,M}^{+}\\[2mm]
e_{L,M}^{-}=\pi_{L}^{-}\rho_{L}^{-}\mathcal{G}(e_{L,M}^{+},e_{L,M}^{-})_{1}+\varepsilon_{L,M}^{-}\\[2mm]
p(\mathcal{G}(e_{L,M}^{+},e_{L,M}^{-})\vert_{[0,1]})=0
\end{array}
\right.
\end{equation}
for
\begin{equation}\label{epsLM}
\setlength\arraycolsep{0.1em}\begin{array}{rcl}
\varepsilon_{\omega,L,M}&:=&\omega_{L,M}\pi_{L}^{+}\rho_{L}^{+}\mathfrak{M}_{M}^{\ast}-\omega\mathfrak{M}^{\ast},\\[2mm]
\varepsilon_{L,M}^{+}&:=&\pi_{L}^{+}\rho_{L}^{+}\mathfrak{L}_{M}^{\ast}\mathcal{G}(u_{0},\psi_{0})_{\cdot}\circ s_{\omega^{\ast}}-\mathfrak{L}^{\ast}\mathcal{G}(u_{0},\psi_{0})_{\cdot}\circ s_{\omega^{\ast}},\\[2mm]
\varepsilon_{L,M}^{-}&:=&(\pi_{L}^{-}\rho_{L}^{-}-I_{\mathbb{A}})\mathcal{G}(u_{0},\psi_{0})_{1}.
\end{array}
\end{equation}
By using \eqref{GKKM} we rewrite the first two equations of \eqref{eLM0} as
\begin{equation}\label{eLM1}
\left\{\setlength\arraycolsep{0.1em}\begin{array}{l}
e_{L,M}^{+}=\pi_{L}^{+}\rho_{L}^{+}\mathcal{K}_{M}^{\ast,+}e_{L,M}^{+}+\pi_{L}^{+}\rho_{L}^{+}\mathcal{K}_{M}^{\ast,-}e_{L,M}^{-}+\varepsilon_{\omega,L,M}+\varepsilon_{L,M}^{+}\\[2mm]
e_{L,M}^{-}=\pi_{L}^{-}\rho_{L}^{-}\mathcal{G}_{1}^{+}e_{L,M}^{+}+\pi_{L}^{-}\rho_{L}^{-}\mathcal{G}_{1}^{-}e_{L,M}^{-}+\varepsilon_{L,M}^{-},
\end{array}
\right.
\end{equation}
where we also used $\mathcal{G}(e_{L,M}^{+},e_{L,M}^{-})_{1}=\mathcal{G}_{1}^{+}e_{L,M}^{+}+\mathcal{G}_{1}^{-}e_{L,M}^{-}$ for
\begin{equation}\label{G1+-}
(\mathcal{G}_{1}^{+}e_{L,M}^{+})(t):=\int_{0}^{1+t}e_{L,M}^{+}(s)\dd s,\qquad(\mathcal{G}_{1}^{-}e_{L,M}^{-})(t)=e_{L,M}^{-}(0)
\end{equation}
and $t\in[-1,0]$ according to the definition of $\mathcal{G}$ in \eqref{G2}. Allowing for a block-wise definition of operators in $\mathbb{U}\times\mathbb{A}$, which should be self-explaining in the following, \eqref{eLM1} becomes
\begin{equation*}
\begin{pmatrix}
e_{L,M}^{+}\\[2mm]
e_{L,M}^{-}
\end{pmatrix}
=
\begin{pmatrix}
\pi_{L}^{+}\rho_{L}^{+}\mathcal{K}_{M}^{\ast,+}&\pi_{L}^{+}\rho_{L}^{+}\mathcal{K}_{M}^{\ast,-}\\[2mm]
\pi_{L}^{-}\rho_{L}^{-}\mathcal{G}_{1}^{+}&\pi_{L}^{-}\rho_{L}^{-}\mathcal{G}_{1}^{-}
\end{pmatrix}
\begin{pmatrix}
e_{L,M}^{+}\\[2mm]
e_{L,M}^{-}
\end{pmatrix}
+
\begin{pmatrix}
\varepsilon_{\omega,L,M}+\varepsilon_{L,M}^{+}\\[2mm]
\varepsilon_{L,M}^{-}
\end{pmatrix}.
\end{equation*}
Now we look for a bound on the collocation error, and we are allowed to search for a bound on $\|(e_{L,M}^{+},e_{L,M}^{-})\|_{C([0,1],\mathbb{R}^{d})\times\mathbb{A}}$. Indeed, it is crucial to observe that $e_{L,M}^{+}$ is continuous since so is $z$ as already observed and $z_{L,M}\in\Pi_{L,m}^{+}$. Moreover, also the first two in \eqref{epsLM} are continuous under \ref{T4} and \ref{N4}. Let us also set for brevity
\begin{equation}\label{C+}
C^{+}:=C([0,1],\mathbb{R}^{d}).
\end{equation}

Note that existence and uniqueness of $(e_{L,M}^{+},e_{L,M}^{-})$ follows already from Propositions \ref{p_Ax*2LM} and \ref{p_Ax*2}, so that 
the invertibility of the operator
\begin{equation*}
\begin{pmatrix}
I_{C^{+}}&0\\[2mm]
0&I_{\mathbb{A}}
\end{pmatrix}
-
\begin{pmatrix}
\pi_{L}^{+}\rho_{L}^{+}\mathcal{K}_{M}^{\ast,+}&\pi_{L}^{+}\rho_{L}^{+}\mathcal{K}_{M}^{\ast,-}\\[2mm]
\pi_{L}^{-}\rho_{L}^{-}\mathcal{G}_{1}^{+}&\pi_{L}^{-}\rho_{L}^{-}\mathcal{G}_{1}^{-}
\end{pmatrix}:C^{+}\times\mathbb{A}\rightarrow C^{+}\times\mathbb{A}
\end{equation*}
is already proved. Anyway, if we manage to prove that
\begin{equation*}
\lim_{L,M\rightarrow\infty}
\left\|
\begin{pmatrix}
\pi_{L}^{+}\rho_{L}^{+}\mathcal{K}_{M}^{\ast,+}&\pi_{L}^{+}\rho_{L}^{+}\mathcal{K}_{M}^{\ast,-}\\[2mm]
\pi_{L}^{-}\rho_{L}^{-}\mathcal{G}_{1}^{+}&\pi_{L}^{-}\rho_{L}^{-}\mathcal{G}_{1}^{-}
\end{pmatrix}
-
\begin{pmatrix}
\mathcal{K}^{\ast,+}&\mathcal{K}^{\ast,-}\\[2mm]
\mathcal{G}_{1}^{+}&\mathcal{G}_{1}^{-}
\end{pmatrix}
\right\|_{C^{+}\times\mathbb{A}\leftarrow C^{+}\times\mathbb{A}}=0,
\end{equation*}
then we can apply the Banach's perturbation lemma to recover the bound
\begin{equation}\label{boundeLM}
\setlength\arraycolsep{0.1em}\begin{array}{l}
\left\|
\left[
\begin{pmatrix}
I_{C^{+}}&0\\[2mm]
0&I_{\mathbb{A}}
\end{pmatrix}
-
\begin{pmatrix}
\pi_{L}^{+}\rho_{L}^{+}\mathcal{K}_{M}^{\ast,+}&\pi_{L}^{+}\rho_{L}^{+}\mathcal{K}_{M}^{\ast,-}\\[2mm]
\pi_{L}^{-}\rho_{L}^{-}\mathcal{G}_{1}^{+}&\pi_{L}^{-}\rho_{L}^{-}\mathcal{G}_{1}^{-}
\end{pmatrix}
\right]^{-1}
\right\|_{C^{+}\times\mathbb{A}\leftarrow C^{+}\times\mathbb{A}}\\[4mm]
\qquad\leq2
\left\|
\left[
\begin{pmatrix}
I_{C^{+}}&0\\[2mm]
0&I_{\mathbb{A}}
\end{pmatrix}
-
\begin{pmatrix}
\mathcal{K}^{\ast,+}&\mathcal{K}^{\ast,-}\\[2mm]
\mathcal{G}_{1}^{+}&\mathcal{G}_{1}^{-}
\end{pmatrix}
\right]^{-1}
\right\|_{C^{+}\times\mathbb{A}\leftarrow C^{+}\times\mathbb{A}},
\end{array}
\end{equation}
for sufficiently large $L$ and $M$, which is also uniform with respect to both $L$ and $M$. Indeed, from Proposition \ref{p_Ax*2} we already know that the operator
\begin{equation*}
\begin{pmatrix}
I_{C^{+}}&0\\[2mm]
0&I_{\mathbb{A}}
\end{pmatrix}
-
\begin{pmatrix}
\mathcal{K}^{\ast,+}&\mathcal{K}^{\ast,-}\\[2mm]
\mathcal{G}_{1}^{+}&\mathcal{G}_{1}^{-}
\end{pmatrix}:C^{+}\times\mathbb{A}\rightarrow C^{+}\times\mathbb{A}
\end{equation*}
is invertible. \eqref{pr+KM+toK+} in Lemma \ref{l_prKMtoK} holds also if we replace $\mathbb{U}$ with $C^{+}$ since the norm is the same. The same holds for \eqref{pr+KM-toK-}. Therefore, thanks to Lemma \ref{l_prG1toG}, \eqref{boundeLM} holds and we obtain
\begin{equation*}
\|(e_{L,M}^{+},e_{L,M}^{-})\|_{C^{+}\times\mathbb{A}}\leq\kappa\|(\varepsilon_{\omega,L,M}+\varepsilon_{L,M}^{+},\varepsilon_{L,M}^{-})\|_{C^{+}\times\mathbb{A}}
\end{equation*}
for some constant $\kappa$ independent of $L$ and $M$. Above, from \eqref{epsLM} we have
\begin{equation}\label{epsLM+}
\varepsilon_{L,M}^{+}=\pi_{L}^{+}\rho_{L}^{+}(\mathfrak{L}_{M}^{\ast}-\mathfrak{L}^{\ast})\mathcal{G}(u_{0},\psi_{0})_{\cdot}\circ s_{\omega^{\ast}}+(\pi_{L}^{+}\rho_{L}^{+}-I_{\mathbb{U}})\mathfrak{L}^{\ast}\mathcal{G}(u_{0},\psi_{0})_{\cdot}\circ s_{\omega^{\ast}},
\end{equation}
so that $\varepsilon_{L,M}^{+}$ vanishes as $L,M\rightarrow\infty$ under \ref{T4} and \ref{N7} by \eqref{p+r+} of Lemma \ref{l_p+r+-I} in Appendix \ref{s_basic} and \eqref{LMtoL} of Lemma \ref{l_prLMMtoLM} (the first addend) and by \eqref{p+r+} of Lemma \ref{l_p+r+-I} again (the second addend). On the other hand, $\varepsilon_{L,M}^{-}$ does not necessarily vanish but is anyway bounded uniformly with respect to $L$ and $M$ since $u_{0}$ is the derivative of $\mathcal{G}(u_{0},\psi_{0})_{1}$ and, as such, it is not necessarily continuous, even though it is bounded. Consequently, it is not difficult to argue that $\|\varepsilon_{L,M}^{-}\|_{\mathbb{A}}\leq\|\psi_{0}\|_{\mathbb{A}}+2\|u_{0}\|_{\mathbb{U}}$ by taking into account for possible jumps in $u_{0}$. It is left to prove that $\varepsilon_{\omega,L,M}$ either vanishes or is uniformly bounded. From \eqref{epsLM} we have
\begin{equation*}
\varepsilon_{\omega,L,M}=\omega_{L,M}\pi_{L}^{+}\rho_{L}^{+}(\mathfrak{M}_{M}^{\ast}-\mathfrak{M}^{\ast})+\omega_{L,M}(\pi_{L}^{+}\rho_{L}^{+}-I_{\mathbb{U}})\mathfrak{M}^{\ast}+(\omega_{L,M}-\omega)\mathfrak{M}^{\ast},
\end{equation*}
in which the third addend at the right-hand side vanishes since $\omega_{L,M}\rightarrow\omega$ thanks to Proposition \ref{p_Ax*2LM-Ax*2} and, therefore, the first and the second addends vanish as well since $\omega_{L,M}$ is uniformly bounded, thanks to the same arguments adopted for \eqref{epsLM+} under \ref{T4}, \ref{N6} and \ref{N7} and also thanks to \eqref{p+r+} in Lemma \ref{l_p+r+-I} in Appendix \ref{s_basic}.

In the proof of Proposition \ref{p_Ax*2LM-Ax*2}, it is also shown that $\psi_{L,M}$ is bounded uniformly in both $L$ and $M$. Finally, we obtain that $\|(u_{L,M},\psi_{L,M},\omega_{L,M})\|_{\mathbb{U}\times\mathbb{A}\times\mathbb{B}}$ is bounded uniformly in both $L$ and $M$ thanks to \eqref{uLMpsiLM} and Proposition \ref{p_Ax*2}.
\end{proof}

\bigskip
We now conclude by proving the validity of the second part of CS2 in \cite{mas15NM}.
\begin{proposition}\label{p_CS2b}
Under \ref{T1}, \ref{T2}, \ref{T4}, \ref{N1}, \ref{N2}, \ref{N4}, \ref{N6} and \ref{N7},
\begin{equation}\label{DPsiLMlim}
\setlength\arraycolsep{0.1em}\begin{array}{rcl}
\lim_{L,M\rightarrow\infty}&&\displaystyle\frac{1}{r_{2}(L,M)}\|[D\Psi_{L,M}(u^{\ast},\psi^{\ast},\omega^{\ast})]^{-1}\|_{\mathbb{U}\times\mathbb{A}\times\mathbb{B}\leftarrow\mathbb{U}\times\mathbb{A}\times\mathbb{B}}\\[3mm]
&&\cdot\|\Psi_{L,M}(u^{\ast},\psi^{\ast},\omega^{\ast})\|_{\mathbb{U}\times\mathbb{A}\times\mathbb{B}}=0,
\end{array}
\end{equation}
where
\begin{equation*}
r_{2}(L,M):=\min\left\{r_{1},\frac{1}{2\kappa\|[D\Psi_{L,M}(u^{\ast},\psi^{\ast},\omega^{\ast})]^{-1}\|_{\mathbb{U}\times\mathbb{A}\times\mathbb{B}\leftarrow\mathbb{U}\times\mathbb{A}\times\mathbb{B}}}\right\}
\end{equation*}
with $r_{1}$ and $\kappa$ as in Proposition \ref{p_CS1}.
\end{proposition}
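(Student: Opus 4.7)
The plan is to reduce \eqref{DPsiLMlim} to the vanishing of $\|\Psi_{L,M}(u^{\ast},\psi^{\ast},\omega^{\ast})\|_{\mathbb{U}\times\mathbb{A}\times\mathbb{B}}$, the remaining prefactor being controlled uniformly in $L$ and $M$. Indeed, Lemma \ref{l_DPsiLMinv} provides a uniform bound $\|[D\Psi_{L,M}(u^{\ast},\psi^{\ast},\omega^{\ast})]^{-1}\|\leq C$ independent of $L,M$, while Remark \ref{r_KLM} guarantees that the Lipschitz constant $\kappa$ of Proposition \ref{p_CS1} is also independent of $L,M$. Consequently $r_{2}(L,M)\geq\min\{r_{1},1/(2\kappa C)\}=:r_{0}>0$ uniformly, so that $\|[D\Psi_{L,M}(u^{\ast},\psi^{\ast},\omega^{\ast})]^{-1}\|/r_{2}(L,M)\leq C/r_{0}$ and it suffices to prove that the residual vanishes.

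To exhibit the residual, I would exploit $\Phi(u^{\ast},\psi^{\ast},\omega^{\ast})=(u^{\ast},\psi^{\ast},\omega^{\ast})$ and add and subtract $P_{L}R_{L}\Phi(u^{\ast},\psi^{\ast},\omega^{\ast})$ inside $\Psi_{L,M}(u^{\ast},\psi^{\ast},\omega^{\ast})$, getting the classical consistency plus secondary-discretization split
\begin{equation*}
\Psi_{L,M}(u^{\ast},\psi^{\ast},\omega^{\ast})=(I-P_{L}R_{L})(u^{\ast},\psi^{\ast},\omega^{\ast})+P_{L}R_{L}(\Phi-\Phi_{M})(u^{\ast},\psi^{\ast},\omega^{\ast}).
\end{equation*}
By \eqref{Phi2} and \eqref{PhiM}, only the $\mathbb{U}$-component of $(\Phi-\Phi_{M})(u^{\ast},\psi^{\ast},\omega^{\ast})$ is nonzero and equals $\omega^{\ast}(G-G_{M})(v^{\ast}_{\cdot}\circ s_{\omega^{\ast}})$, whose sup norm vanishes under \ref{N6} uniformly in $t\in[0,1]$. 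Since $\|\pi_{L}^{+}\rho_{L}^{+}\|_{\mathbb{U}\leftarrow\mathbb{U}}\leq\Lambda_{m}$ uniformly in $L$ by \eqref{lebesgue}, the secondary-discretization term vanishes in the product norm.

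For the consistency term the $\mathbb{B}$-component is identically zero by \eqref{RL} and \eqref{PL}; the $\mathbb{U}$-component is $(I_{\mathbb{U}}-\pi_{L}^{+}\rho_{L}^{+})u^{\ast}$, which vanishes in sup norm because $u^{\ast}=\omega^{\ast}G(v^{\ast}_{\cdot}\circ s_{\omega^{\ast}})$ is continuous on $[0,1]$ under \ref{T4}, via the interpolation estimate already invoked in the proof of Lemma \ref{l_DPsiLMinv} (namely \eqref{p+r+} of Lemma \ref{l_p+r+-I}); the $\mathbb{A}$-component is $(I_{\mathbb{A}}-\pi_{L}^{-}\rho_{L}^{-})\psi^{\ast}$ with $\psi^{\ast}=v^{\ast}|_{[-1,0]}$, and its vanishing in the $B^{1,\infty}$-norm is delivered by the counterpart on $[-1,0]$ of the same appendix lemma applied to the smooth trace of the periodic solution.

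The main obstacle is precisely the $\mathbb{A}$-component, since the $B^{1,\infty}$-norm demands uniform convergence not only of the piecewise polynomial interpolant to $\psi^{\ast}$ but also of its derivative to $(\psi^{\ast})'$, a convergence that does not hold for generic $B^{1,\infty}$ data (in contrast to the residual $\varepsilon_{L,M}^{-}$ in the proof of Lemma \ref{l_DPsiLMinv}, which is only uniformly bounded). The crucial point to be exploited is that $v^{\ast}$ is the $1$-periodic solution of \eqref{rfdet} under \ref{T4}, so that $(\psi^{\ast})'(\theta)=\omega^{\ast}G(v^{\ast}_{\theta+1}\circ s_{\omega^{\ast}})$ via the periodic identification intrinsic to \eqref{bvp2}, hence $(\psi^{\ast})'$ is itself continuous and inherits further bootstrap smoothness from \ref{T4}. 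This extra regularity is what makes the piecewise polynomial interpolation converge in the $C^{1}$ norm on $[-1,0]$, completing the vanishing of the consistency term and thus of $\|\Psi_{L,M}(u^{\ast},\psi^{\ast},\omega^{\ast})\|$.
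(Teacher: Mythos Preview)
Your argument is correct and matches the paper's proof essentially step for step: uniform control of the prefactor via Lemma \ref{l_DPsiLMinv} and Remark \ref{r_KLM}, the split \eqref{PsiLMbound} into primary and secondary consistency pieces, and the vanishing of each via the regularity of $u^{\ast}$ and $(\psi^{\ast})'$ inherited from the periodic solution. One small slip: for the vanishing of $(I_{\mathbb{U}}-\pi_{L}^{+}\rho_{L}^{+})u^{\ast}$ you cite \eqref{p+r+}, which is only the uniform bound $\|\pi_{L}^{+}\rho_{L}^{+}\|\leq\Lambda_{m}$; the convergence you actually need is \eqref{p+r+-I} (and \eqref{p-r--I} of Lemma \ref{l_p-r--I} for the $\mathbb{A}$-component), exactly as the paper invokes through Lemma \ref{l_v*'}.
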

\begin{proof}
Thanks to Lemma \ref{l_DPsiLMinv} and to the fact that $r_{1}$ and $\kappa$ in Proposition \ref{p_CS1} are independent of $L$ and $M$ (recall indeed Remark \ref{r_KLM}), it remains to prove that the norm of $\Psi_{L,M}(u^{\ast},\psi^{\ast},\omega^{\ast})$ vanishes. We have
\begin{equation}\label{PsiLMbound}
\setlength\arraycolsep{0.1em}\begin{array}{rcl}
\|\Psi_{L,M}(u^{\ast},\psi^{\ast},\omega^{\ast})\|_{\mathbb{U}\times\mathbb{A}\times\mathbb{B}}\leq\|(I_{\mathbb{U}\times\mathbb{A}\times\mathbb{B}}&-&P_{L}R_{L})(u^{\ast},\psi^{\ast},\omega^{\ast})\|_{\mathbb{U}\times\mathbb{A}\times\mathbb{B}}\\[2mm]
+\|P_{L}R_{L}[\Phi_{M}(u^{\ast},\psi^{\ast},\omega^{\ast})&-&\Phi(u^{\ast},\psi^{\ast},\omega^{\ast})]\|_{\mathbb{U}\times\mathbb{A}\times\mathbb{B}}
\end{array}
\end{equation}
since $\Phi(u^{\ast},\psi^{\ast},\omega^{\ast})=(u^{\ast},\psi^{\ast},\omega^{\ast})$. The second addend in the right-hand side above vanishes under \ref{N6} and \ref{N7} and thanks to \eqref{PLRL} of Corollary \ref{c_PLRL} in Appendix \ref{s_basic}. The first addend vanishes as well by Lemma \ref{l_v*'}, which shows in particular that $u^{\ast}$ and ${\psi^{\ast}}'$ are continuous.
\end{proof}
\section{Convergence analysis}
\label{s_convergence}
In Sections \ref{s_abstract} and \ref{s_discretization} we have proved the validity of all the assumptions needed to use the method in \cite{mas15NM} as applied to \eqref{bvp2}. In this section, we first state two theorems which eventually ensure the convergence of the method in view of Remark \ref{r_FEMSEM}. For their proof we refer to the corresponding \cite[Theorems 1 and 2]{mas15NM}. Then we comment about the rate of convergence, which is elaborated in subsequent sections. {\color{black} Section \ref{s_practical} includes some observations on the requirements \ref{T3}-\ref{T5} and \ref{N3}-\ref{N7} for concrete, specific instances of the right-hand side. }Finally we give a brief account of the SEM in Section \ref{s_SEM}.

\bigskip
\begin{theorem}[\protect{\cite[Theorem 1, page 538]{mas15NM}}]\label{t1}
Under \ref{T1}, \ref{T2}, \ref{T4}, \ref{N1}, \ref{N2}, \ref{N4}, \ref{N5}, \ref{N6} and \ref{N7}, there exists a positive integer $\overline{N}$ such that, for every $L,M\geq \overline{N}$, $P_{L}R_{L}\Phi_{M}$ has a unique fixed point $(\tilde{u}_{L,M}^{\ast},\tilde{\psi}_{L,M}^{\ast},\tilde{\omega}_{L,M}^{\ast})$ in $\overline{B}((u^{\ast},\psi^{\ast},\omega^{\ast}),$ $r_{2}(L,M))$ and
\begin{equation*}
\setlength\arraycolsep{0.1em}\begin{array}{rcl}
\|(\tilde{u}_{L,M}^{\ast},\tilde{\psi}_{L,M}^{\ast},\tilde{\omega}_{L,M}^{\ast})&-&(u^{\ast},\psi^{\ast},\omega^{\ast})\|_{\mathbb{U}\times\mathbb{A}\times\mathbb{B}}\\[2mm]
&\leq&2\|[D\Psi_{L,M}(u^{\ast},\psi^{\ast},\omega^{\ast})]^{-1}\|_{\mathbb{U}\times\mathbb{A}\times\mathbb{B}\leftarrow\mathbb{U}\times\mathbb{A}\times\mathbb{B}}\\[2mm]
&&\cdot\|\Psi_{L,M}(u^{\ast},\psi^{\ast},\omega^{\ast})\|_{\mathbb{U}\times\mathbb{A}\times\mathbb{B}}
\end{array}
\end{equation*}
holds for $r_{2}(L,M)$ defined as in Proposition \ref{p_CS2b}. Moreover, we have the expansion
\begin{equation*}
\setlength\arraycolsep{0.1em}\begin{array}{rcl}
(\tilde{u}_{L,M}^{\ast},\tilde{\psi}_{L,M}^{\ast},\tilde{\omega}_{L,M}^{\ast})&-&(u^{\ast},\psi^{\ast},\omega^{\ast})\\[2mm]
&=&-[D\Psi_{L,M}(u^{\ast},\psi^{\ast},\omega^{\ast})]^{-1}\Psi_{L,M}(u^{\ast},\psi^{\ast},\omega^{\ast})+\delta_{L,M},
\end{array}
\end{equation*}
where
\begin{equation*}
\setlength\arraycolsep{0.1em}\begin{array}{rcl}
\|\delta_{L,M}\|_{\mathbb{U}\times\mathbb{A}\times\mathbb{B}}&\leq&4\kappa\cdot\|[D\Psi_{L,M}(u^{\ast},\psi^{\ast},\omega^{\ast})]^{-1}\|^3_{\mathbb{U}\times\mathbb{A}\times\mathbb{B}\leftarrow\mathbb{U}\times\mathbb{A}\times\mathbb{B}}\\[2mm]
&&\cdot\|\Psi_{L,M}(u^{\ast},\psi^{\ast},\omega^{\ast})\|^2_{\mathbb{U}\times\mathbb{A}\times\mathbb{B}}
\end{array}
\end{equation*}
for $\kappa$ defined as in Proposition \ref{p_CS1}.
\end{theorem}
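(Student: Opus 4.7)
The plan is to recognize this as a quantitative Newton--Kantorovich-type statement for the nonlinear equation $\Psi_{L,M}(x) = 0$ on $\mathbb{U}\times\mathbb{A}\times\mathbb{B}$, centered at the approximate zero $x^{\ast} := (u^{\ast},\psi^{\ast},\omega^{\ast})$. All three ingredients that such a result requires have already been secured in the previous two sections: smallness of the residual $\|\Psi_{L,M}(x^{\ast})\|$ via Proposition~\ref{p_CS2b} (since $\Phi(x^{\ast}) = x^{\ast}$, one only has to bound $(I-P_{L}R_{L})x^{\ast}$ and $P_{L}R_{L}[\Phi_{M}-\Phi](x^{\ast})$); invertibility of $D\Psi_{L,M}(x^{\ast})$ with a uniform bound on the inverse (Lemma~\ref{l_DPsiLMinv}); and a uniform Lipschitz constant $\kappa$ for $D\Psi_{L,M}$ in a neighborhood of $x^{\ast}$ (Proposition~\ref{p_CS1}). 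This is exactly the setup of \cite[Theorem 1]{mas15NM}, so one could in principle just invoke it; the steps below indicate how I would reconstruct the argument.

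For existence and uniqueness I would introduce the Newton-type operator
\[
\mathcal{N}_{L,M}(x) := x - [D\Psi_{L,M}(x^{\ast})]^{-1}\Psi_{L,M}(x)
\]
on the closed ball $\overline{B}(x^{\ast}, r_{2}(L,M))$. Writing $D\mathcal{N}_{L,M}(x) = [D\Psi_{L,M}(x^{\ast})]^{-1}\bigl[D\Psi_{L,M}(x^{\ast})-D\Psi_{L,M}(x)\bigr]$ and using Proposition~\ref{p_CS1} together with the very definition of $r_{2}(L,M)$ gives $\|D\mathcal{N}_{L,M}(x)\| \leq \kappa\,r_{2}(L,M)\,\|[D\Psi_{L,M}(x^{\ast})]^{-1}\| \leq \tfrac12$, so $\mathcal{N}_{L,M}$ is a $\tfrac12$-contraction on the ball. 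The hypothesis \eqref{DPsiLMlim} guarantees, for $L,M$ large enough, that $\|\mathcal{N}_{L,M}(x^{\ast}) - x^{\ast}\| = \|[D\Psi_{L,M}(x^{\ast})]^{-1}\Psi_{L,M}(x^{\ast})\| \leq r_{2}(L,M)/2$, so $\mathcal{N}_{L,M}$ maps the ball into itself. Banach's fixed point theorem then delivers a unique $\tilde{x}_{L,M}^{\ast}$ in the ball, which is also the unique zero of $\Psi_{L,M}$ there; the standard geometric-series estimate for contractions with constant $\tfrac12$ immediately yields $\|\tilde{x}_{L,M}^{\ast} - x^{\ast}\| \leq 2\|\mathcal{N}_{L,M}(x^{\ast}) - x^{\ast}\|$, which is the stated bound.

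For the asymptotic expansion I would Taylor-expand $\Psi_{L,M}$ at $x^{\ast}$ along the segment to $\tilde{x}_{L,M}^{\ast}$. Since $\Psi_{L,M}(\tilde{x}_{L,M}^{\ast})=0$ and $D\Psi_{L,M}$ is $\kappa$-Lipschitz on the ball, the integral remainder obeys
\[
\Bigl\|\Psi_{L,M}(\tilde{x}_{L,M}^{\ast}) - \Psi_{L,M}(x^{\ast}) - D\Psi_{L,M}(x^{\ast})(\tilde{x}_{L,M}^{\ast}-x^{\ast})\Bigr\| \leq \tfrac{\kappa}{2}\|\tilde{x}_{L,M}^{\ast}-x^{\ast}\|^{2}.
\]
Solving for $\tilde{x}_{L,M}^{\ast}-x^{\ast}$ by applying $[D\Psi_{L,M}(x^{\ast})]^{-1}$ produces the expansion with $\delta_{L,M}$ equal to the inverse applied to the remainder. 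Inserting the just-proved error bound $\|\tilde{x}_{L,M}^{\ast}-x^{\ast}\|^{2} \leq 4\|[D\Psi_{L,M}(x^{\ast})]^{-1}\|^{2}\|\Psi_{L,M}(x^{\ast})\|^{2}$ gives the quantitative estimate on $\|\delta_{L,M}\|$ stated in the theorem (the absolute constant $4\kappa$ rather than $2\kappa$ already accounts for this doubling).

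The genuinely hard work has been done upstream: establishing the uniform inverse bound in Lemma~\ref{l_DPsiLMinv} (which required the careful treatment of the non-generic case $k_{1}=0$ via adjoint/Floquet theory) and the uniform Lipschitz control in Proposition~\ref{p_CS1}. Given these, the present theorem is essentially a quantitative inverse function theorem, and the only subtlety in transcribing the proof from \cite[Theorem 1]{mas15NM} is to keep track of the fact that $D\Phi_{M}$ (and hence $D\Psi_{L,M}$) is differentiable only from the right in $\omega$, so that Taylor expansions in the $\omega$-direction must be carried out over intervals $\omega \geq \omega^{\ast}$; this restriction is inherited from Proposition~\ref{p_Afb} and does not affect any of the estimates above, since only norm bounds on the remainder are used.
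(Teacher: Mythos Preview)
Your proposal is correct. The paper does not actually prove this theorem but simply refers to \cite[Theorem~1]{mas15NM}, so your Newton--Kantorovich reconstruction is exactly the standard argument underlying that cited result and is consistent with (indeed more detailed than) what the paper provides; the constants you obtain are at least as good as those stated.
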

\begin{theorem}[\protect{\cite[Theorem 2, page 539]{mas15NM}}] Under \ref{T1}, \ref{T2}, \ref{T4}, \ref{N1}, \ref{N2}, \ref{N4}, \ref{N5}, \ref{N6} and \ref{N7}, there exists a positive integer $\hat{N}$ such that, for all $L,M\geq\hat{N}$, the operator $R_{L}\Phi_{M}P_{L}$ has a fixed point $(u_{L,M}^{\ast},\psi_{L,M}^{\ast},\omega_{L,M}^{\ast})$ and
\begin{equation*}
\setlength\arraycolsep{0.1em}\begin{array}{rcl}
\|P_{L}(u_{L,M}^{\ast},\psi_{L,M}^{\ast},\omega_{L,M}^{\ast})&-&(u^{\ast},\psi^{\ast},\omega^{\ast})\|_{\mathbb{U}\times\mathbb{A}\times\mathbb{B}}\\[2mm]
&\leq2&\|[D\Psi_{L,M}(u^{\ast},\psi^{\ast},\omega^{\ast})]^{-1}\|_{\mathbb{U}\times\mathbb{A}\times\mathbb{B}\leftarrow\mathbb{U}\times\mathbb{A}\times\mathbb{B}}\\[2mm]
&&\cdot\|\Psi_{L,M}(u^{\ast},\psi^{\ast},\omega^{\ast})\|_{\mathbb{U}\times\mathbb{A}\times\mathbb{B}}
\end{array}
\end{equation*}
and
\begin{equation*}
\setlength\arraycolsep{0.1em}\begin{array}{rcl}
P_{L}(u_{L,M}^{\ast},\psi_{L,M}^{\ast},\omega_{L,M}^{\ast})&-&(u^{\ast},\psi^{\ast},\omega^{\ast})\\[2mm]
&=&-[D\Psi_{L,M}(u^{\ast},\psi^{\ast},\omega^{\ast})]^{-1}\Psi_{L,M}(u^{\ast},\psi^{\ast},\omega^{\ast})+\delta_{L,M},
\end{array}
\end{equation*}
where $\delta_{L,M}$ is bounded as in Theorem \ref{t1}. Moreover, if $(\hat u_{L,M}^{\ast},\hat\psi_{L,M}^{\ast},\hat\omega_{L,M}^{\ast})$ is another fixed point of $R_{L}\Phi_{M}P_{L}$, then
\begin{equation*}
\|P_{L}(\hat u_{L,M}^{\ast},\hat\psi_{L,M}^{\ast},\hat\omega_{L,M}^{\ast})-(u^{\ast},\psi^{\ast},\omega^{\ast})\|_{\mathbb{U}\times\mathbb{A}\times\mathbb{B}}>r_{2}(L,M)
\end{equation*}
and
\begin{equation*}
\setlength\arraycolsep{0.1em}\begin{array}{rcl}
\|(\hat u_{L,M}^{\ast},\hat\psi_{L,M}^{\ast},\hat\omega_{L,M}^{\ast})&-&(u_{L,M}^{\ast},\psi_{L,M}^{\ast},\omega_{L,M}^{\ast})\|_{\mathbb{U}_{L}\times\mathbb{A}_{L}\times\mathbb{B}}>\frac{r_{2}(L,M)}{2\cdot\max\{\|\pi_{L}^{+}\|_{\mathbb{U}\leftarrow\mathbb{U}_{L}},\|\pi_{L}^{-}\|_{\mathbb{A}\leftarrow\mathbb{A}_{L}},1\}}
\end{array}
\end{equation*}
for $r_{2}(L,K)$ defined as in Proposition \ref{p_CS2b}. Finally,
\begin{equation}\label{converr}
\setlength\arraycolsep{0.1em}\begin{array}{rcl}
\|(v_{L,M}^{\ast},\omega_{L,M}^{\ast})&-&(v^{\ast},\omega^{\ast})\|_{\mathbb{V}\times\mathbb{B}}\leq2\cdot\max\{\|\mathcal{G}\|_{\mathbb{V}\leftarrow\mathbb{U}\times\mathbb{A}},1\}\\[2mm]
&&\cdot\|[D\Psi_{L,M}(u^{\ast},\psi^{\ast},\omega^{\ast})]^{-1}\|_{\mathbb{U}\times\mathbb{A}\times\mathbb{B}\leftarrow\mathbb{U}\times\mathbb{A}\times\mathbb{B}}\\[2mm]
&&\cdot\|\Psi_{L,M}(u^{\ast},\psi^{\ast},\omega^{\ast})\|_{\mathbb{U}\times\mathbb{A}\times\mathbb{B}}.
\end{array}
\end{equation}
\end{theorem}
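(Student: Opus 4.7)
The plan is to reduce everything to Theorem \ref{t1}, since fixed points of $R_L\Phi_M P_L$ and of $P_L R_L\Phi_M$ in the range of $P_L$ correspond bijectively via $P_L$ and $R_L$. First I would exploit the standard identity $R_L P_L=I_{\mathbb{U}_L\times\mathbb{A}_L\times\mathbb{B}}$ to observe that, given any fixed point $(\tilde u,\tilde\psi,\tilde\omega)$ of $P_L R_L\Phi_M$, one automatically has $(\tilde u,\tilde\psi,\tilde\omega)=P_L R_L(\tilde u,\tilde\psi,\tilde\omega)$, so setting $(u_{L,M}^{\ast},\psi_{L,M}^{\ast},\omega_{L,M}^{\ast}):=R_L(\tilde u_{L,M}^{\ast},\tilde\psi_{L,M}^{\ast},\tilde\omega_{L,M}^{\ast})$ for the fixed point produced by Theorem \ref{t1} yields
\begin{equation*}
R_L\Phi_M P_L(u_{L,M}^{\ast},\psi_{L,M}^{\ast},\omega_{L,M}^{\ast})=R_L\Phi_M(\tilde u_{L,M}^{\ast},\tilde\psi_{L,M}^{\ast},\tilde\omega_{L,M}^{\ast})=R_L(\tilde u_{L,M}^{\ast},\tilde\psi_{L,M}^{\ast},\tilde\omega_{L,M}^{\ast})=(u_{L,M}^{\ast},\psi_{L,M}^{\ast},\omega_{L,M}^{\ast}).
\end{equation*}
Moreover $P_L(u_{L,M}^{\ast},\psi_{L,M}^{\ast},\omega_{L,M}^{\ast})=(\tilde u_{L,M}^{\ast},\tilde\psi_{L,M}^{\ast},\tilde\omega_{L,M}^{\ast})$, so the first error bound and the expansion with remainder $\delta_{L,M}$ follow verbatim from the corresponding statements in Theorem \ref{t1}.

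Next I would address the ``uniqueness up to distance'' part. If $(\hat u_{L,M}^{\ast},\hat\psi_{L,M}^{\ast},\hat\omega_{L,M}^{\ast})$ is another fixed point of $R_L\Phi_M P_L$, the same argument shows that $P_L(\hat u_{L,M}^{\ast},\hat\psi_{L,M}^{\ast},\hat\omega_{L,M}^{\ast})$ is a fixed point of $P_L R_L\Phi_M$ distinct from $(\tilde u_{L,M}^{\ast},\tilde\psi_{L,M}^{\ast},\tilde\omega_{L,M}^{\ast})$ (since $R_L$ is injective on the range of $P_L$). By the local uniqueness guaranteed by Theorem \ref{t1}, it must lie outside the ball of radius $r_{2}(L,M)$ around $(u^{\ast},\psi^{\ast},\omega^{\ast})$. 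A triangle inequality then gives
\begin{equation*}
r_{2}(L,M)<\|P_L(\hat u_{L,M}^{\ast},\hat\psi_{L,M}^{\ast},\hat\omega_{L,M}^{\ast})-P_L(u_{L,M}^{\ast},\psi_{L,M}^{\ast},\omega_{L,M}^{\ast})\|_{\mathbb{U}\times\mathbb{A}\times\mathbb{B}}+\frac{r_{2}(L,M)}{2},
\end{equation*}
where the second term uses that Proposition \ref{p_CS2b} forces $\|P_L(u_{L,M}^{\ast},\psi_{L,M}^{\ast},\omega_{L,M}^{\ast})-(u^{\ast},\psi^{\ast},\omega^{\ast})\|\leq r_{2}(L,M)/2$ for $L,M$ large. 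Dividing by the operator norm $\max\{\|\pi_L^{+}\|_{\mathbb{U}\leftarrow\mathbb{U}_L},\|\pi_L^{-}\|_{\mathbb{A}\leftarrow\mathbb{A}_L},1\}$ of $P_L$ yields the claimed separation bound in $\mathbb{U}_L\times\mathbb{A}_L\times\mathbb{B}$.

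Finally, for \eqref{converr} I would use linearity of $\mathcal{G}$ to write $v_{L,M}^{\ast}-v^{\ast}=\mathcal{G}(\pi_L^{+}u_{L,M}^{\ast}-u^{\ast},\pi_L^{-}\psi_{L,M}^{\ast}-\psi^{\ast})$, and bound via Proposition \ref{p_G}, namely $\|v_{L,M}^{\ast}-v^{\ast}\|_{\mathbb{V}}\leq\|\mathcal{G}\|_{\mathbb{V}\leftarrow\mathbb{U}\times\mathbb{A}}\cdot\|P_L(u_{L,M}^{\ast},\psi_{L,M}^{\ast},\omega_{L,M}^{\ast})-(u^{\ast},\psi^{\ast},\omega^{\ast})\|_{\mathbb{U}\times\mathbb{A}}$. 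Taking the product-space maximum with $|\omega_{L,M}^{\ast}-\omega^{\ast}|$ introduces the factor $\max\{\|\mathcal{G}\|_{\mathbb{V}\leftarrow\mathbb{U}\times\mathbb{A}},1\}$, after which the already-proved first error bound completes the estimate.

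I do not expect genuine mathematical obstacles: the whole argument is essentially bookkeeping on top of Theorem \ref{t1} together with the elementary intertwining relations between $R_L$, $P_L$ and the two fixed-point operators. The only point where one has to be careful is the second step above, where the estimate $\|\tilde u_{L,M}^{\ast}-u^{\ast}\|\leq r_{2}(L,M)/2$ is not stated explicitly in Theorem \ref{t1} but is forced by the definition of $r_{2}(L,M)$ together with the vanishing of $\|[D\Psi_{L,M}(u^{\ast},\psi^{\ast},\omega^{\ast})]^{-1}\|\cdot\|\Psi_{L,M}(u^{\ast},\psi^{\ast},\omega^{\ast})\|$ provided by Proposition \ref{p_CS2b}.
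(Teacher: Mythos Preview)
Your reduction is correct and is precisely the standard argument: the paper itself does not give a proof of this statement but simply refers to \cite[Theorem 2]{mas15NM}, so there is nothing in the paper to compare against beyond noting that the abstract result is being quoted once the assumptions (Propositions \ref{p_CS1}, \ref{p_Ax*2LM}, \ref{p_CS2b} and Lemma \ref{l_DPsiLMinv}) are in place. Your use of $R_LP_L=I_{\mathbb{U}_L\times\mathbb{A}_L\times\mathbb{B}}$ and the fact that any fixed point of $P_LR_L\Phi_M$ lies in the range of $P_L$ (hence $P_LR_L$ acts as the identity on it) is exactly the mechanism that transports Theorem \ref{t1} to the discrete operator $R_L\Phi_MP_L=\Phi_{L,M}$, and the separation and $\mathbb{V}$-error bounds follow as you indicate. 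The one point you flag yourself---that $\|P_L(u_{L,M}^{\ast},\psi_{L,M}^{\ast},\omega_{L,M}^{\ast})-(u^{\ast},\psi^{\ast},\omega^{\ast})\|\leq r_2(L,M)/2$ is needed for the triangle-inequality step---is indeed obtained from Proposition \ref{p_CS2b} at the possible cost of enlarging $\hat N$ beyond the $\overline N$ of Theorem \ref{t1}; this is harmless.
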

\noindent Recall that Proposition \ref{p_G} holds for the second factor in the right-hand side of \eqref{converr}. More importantly, thanks to Lemma \ref{l_DPsiLMinv}, the error on $(v^{\ast},\omega^{\ast})$ is determined by the last factor, namely the \emph{consistency error}. For the latter \eqref{PsiLMbound} in the proof of Proposition \ref{p_CS2b} holds and, in view of Corollary \ref{c_PLRL} in Appendix \ref{s_basic}, we can write
\begin{equation}\label{epsLepsM}
\|\Psi_{L,M}(u^{\ast},\psi^{\ast},\omega^{\ast})\|_{\mathbb{U}\times\mathbb{A}\times\mathbb{B}}\leq\varepsilon_{L}+\max\{\Lambda_{m}+\Lambda'_m,1\}\varepsilon_{M},
\end{equation}
where the important terms are
\begin{equation}\label{epsL}
\varepsilon_{L}:=\|(I_{\mathbb{U}\times\mathbb{A}\times\mathbb{B}}-P_{L}R_{L})(u^{\ast},\psi^{\ast},\omega^{\ast})\|_{\mathbb{U}\times\mathbb{A}\times\mathbb{B}}
\end{equation}
and
\begin{equation}\label{epsM}
\varepsilon_{M}:=\|\Phi_{M}(u^{\ast},\psi^{\ast},\omega^{\ast})-\Phi(u^{\ast},\psi^{\ast},\omega^{\ast})\|_{\mathbb{U}\times\mathbb{A}\times\mathbb{B}}.
\end{equation}
We call these contributions respectively {\it primary} and {\it secondary} consistency errors, and we analyze them separately in the following sections.
\subsection{Primary consistency error}
\label{s_primary}
The error term $\varepsilon_{L}$ in \eqref{epsL} concerns only the primary discretization and, according to \eqref{RL}, \eqref{PL} and \eqref{normprod} we have
\begin{equation*}
\varepsilon_{L}\leq\max\{\|u^{\ast}-\pi_{L}^{+}\rho_{L}^{+}u^{\ast}\|_{\mathbb{U}},\|\psi^{\ast}-\pi_{L}^{-}\rho_{L}^{-}\psi^{\ast}\|_{\mathbb{A}}\}.
\end{equation*}
Therefore a bound on $\varepsilon_{L}$ depends on the regularity of both $u^{\ast}$ and $\psi^{\ast}$, so that we prove the following result.
\begin{theorem}\label{t_epsL}
Let $G\in\mathcal{C}^{p}(\mathtt{Y},\mathbb{R}^{d})$ for some integer $p\geq1$. Then, Under \ref{T1}, \ref{T2}, \ref{N1} and \ref{N2}, it holds that $u^{\ast}\in C^{p}([0,1],\mathbb{R}^{d})$, $\psi^{\ast}\in C^{p+1}([-1,0],\mathbb{R}^{d})$, $v^{\ast}\in C^{p+1}([-1,1],\mathbb{R}^{d})$ and
\begin{equation}\label{epsLh}
\varepsilon_{L}=O\left(h^{\min\{m,p\}}\right).
\end{equation}
\end{theorem}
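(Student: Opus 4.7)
The plan is two-fold: first establish the claimed $C^p$/$C^{p+1}$ regularity of $u^*$, $\psi^*$ and $v^*$ by a bootstrap argument based on the RFDE \eqref{rfdet} at $\omega = \omega^*$, and then reduce $\varepsilon_L$ in \eqref{epsL} to two classical piecewise-polynomial interpolation errors, which can be controlled by the standard estimates collected in Appendix \ref{s_basic}.

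For the regularity, I would work with the $1$-periodic extension $\tilde v^*\colon\mathbb{R}\to\mathbb{R}^d$ of $v^*$, which satisfies $\tilde v^{*\prime}(t) = \omega^* G(\tilde v^*_t \circ s_{\omega^*})$ for every $t \in \mathbb{R}$, and proceed by induction on $k$. At $k=0$, continuity of $\tilde v^*$ gives continuity of the history map $t \mapsto \tilde v^*_t \circ s_{\omega^*}$ into $\mathtt{Y}$, so the right-hand side is continuous and $\tilde v^* \in C^1$. For the inductive step, assuming $\tilde v^* \in C^k$ for $1 \leq k \leq p$, the history map inherits class $C^k$ (each fixed-$\sigma$ slice is $C^k$ in $t$, and uniform continuity on compacts of $\tilde v^{*(k)}$ ensures that the relevant difference quotients converge in the $B^\infty$ norm), and the Fr\'echet chain rule applied to $G \in \mathcal{C}^p(\mathtt{Y},\mathbb{R}^d)$ then yields $\tilde v^{*\prime} \in C^k$, hence $\tilde v^* \in C^{k+1}$. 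Iterating until $k = p$ gives $\tilde v^* \in C^{p+1}(\mathbb{R},\mathbb{R}^d)$; restriction yields $v^* \in C^{p+1}([-1,1],\mathbb{R}^d)$, $u^* = v^{*\prime}|_{[0,1]} \in C^p([0,1],\mathbb{R}^d)$ and $\psi^* = v^*|_{[-1,0]} \in C^{p+1}([-1,0],\mathbb{R}^d)$. Smoothness across $t=0$ is automatic, since $\tilde v^*$ solves the RFDE globally on $\mathbb{R}$.

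For the error bound itself, by \eqref{RL}, \eqref{PL} and \eqref{normprod}, and noting that $P_L R_L$ is the identity on the $\mathbb{B}$-component, we have
\begin{equation*}
\varepsilon_L = \max\bigl\{\|u^* - \pi_L^+ \rho_L^+ u^*\|_\infty,\ \|\psi^* - \pi_L^- \rho_L^- \psi^*\|_{B^{1,\infty}}\bigr\}.
\end{equation*}
The first term is the uniform error of piecewise Lagrange interpolation of degree $m$ applied to $u^* \in C^p$ on a mesh of width $h$, hence $O(h^{\min\{m+1,p\}})$. By \eqref{normB1inf}, the second term splits into a sup-norm error on $\psi^* \in C^{p+1}$, which is $O(h^{\min\{m+1,p+1\}})$, plus the sup-norm error on the interpolated derivative, which loses one order and is thus $O(h^{\min\{m,p\}})$. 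Since the latter always dominates (one has $\min\{m,p\}\leq\min\{m+1,p\}\leq\min\{m+1,p+1\}$), taking the worst contribution yields \eqref{epsLh}.

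The main technical hurdle is the inductive step in the regularity argument, namely the rigorous justification that $t \mapsto \tilde v^*_t \circ s_{\omega^*}$ is genuinely of class $C^k$ as an $\mathtt{Y}$-valued map, so that the infinite-dimensional chain rule actually applies to the composition with $G$. Once this is in place, the rest of the proof amounts to invoking standard piecewise-polynomial interpolation error estimates.
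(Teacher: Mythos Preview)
Your proposal is correct and follows essentially the same two-part structure as the paper's proof: an inductive bootstrap for regularity via the smoothing effect of RFDEs, followed by the reduction of $\varepsilon_L$ to the piecewise interpolation errors on $u^*$ and on $\psi^*$ (with the derivative term dominating). The paper argues regularity by first obtaining smoothness on $[0,\infty)$ and then pulling it back to $[-1,0]$ through periodicity, whereas you work directly with the $1$-periodic extension on $\mathbb{R}$ and the $C^k$-smoothness of the history map in $\mathtt{Y}$; these are equivalent formulations of the same smoothing argument, and your explicit mention of the chain-rule step is in fact more careful than the paper's appeal to the ``well-known smoothing effect''. For the interpolation bounds the paper spells out the two regimes $p\geq m+1$ and $p\leq m+1$ via the Cauchy remainder and Jackson's theorem (and the analogous derivative estimates from \cite{mo01,riv81}), which is exactly what your ``standard estimates'' should unpack.
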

\begin{proof}
Recall that $v^{\ast}=\mathcal{G}(u^{\ast},\psi^{\ast})$ satisfies \eqref{bvp2}, hence its periodic extension to $[-1,\infty]$ is a periodic solution of \eqref{rfdet}. Given the existence of this solution, if $G$ is only continuous, then $u^{\ast}$ is continuous and $v^{\ast}$ is continuously differentiable in $[0,+\infty)$. It follows that $\psi^{\ast}$ is continuously differentiable by periodicity and, moreover, ${\psi^{\ast}}'(0)=u^{\ast}(0)$ follows again by periodicity since ${v^{\ast}}'$ is continuous at $1$. This means that $v^{\ast}$ is continuously differentiable in $[-1,1]$. As a consequence, if $p=1$, $u^{\ast}$ becomes continuously differentiable and the whole reasoning can be repeated, proving the first part of the result. This is a consequence of the well-known {\it smoothing effect} of RFDEs.

To prove \eqref{epsLh}, we observe first that
\begin{equation}\label{epsLu1}
\|u^{\ast}-\pi_{L}^{+}\rho_{L}^{+}u^{\ast}\|_{\mathbb{U}}\leq\frac{\|{u^{\ast}}^{(m+1)}\|_{\infty}}{(m+1)!}\cdot h^{m+1}
\end{equation}
holds if $p\geq m+1$, while
\begin{equation}\label{epsLu2}
\|u^{\ast}-\pi_{L}^{+}\rho_{L}^{+}u^{\ast}\|_{\mathbb{U}}\leq(1+\Lambda_{m})\left(\frac{h}{2}\right)^{p}\frac{c_{p}}{m^{p}}\cdot\|{u^{\ast}}^{(p)}\|_{\infty}
\end{equation}
holds if $p\leq m+1$, with $c_{p}$ a positive constant independent of $m$. \eqref{epsLu1} is a direct consequence of the standard Cauchy interpolation reminder, see, e.g., \cite[Section 6.1, Theorem 2]{kc02}. \eqref{epsLu2} is a direct consequence of Jackson's theorem on best uniform approximation, see, e.g., \cite[(2.9) and (2.11)]{mas15I}.

Secondly, similar results can be obtained for the component in $\mathbb{A}$, by recalling that $\|\cdot\|_{\mathbb{A}}$ is given by the second of \eqref{normB1inf}. Indeed, on the one hand, based on the same arguments used above for \eqref{epsLu1} and \eqref{epsLu2},
\begin{equation*}
\|\psi^{\ast}-\pi_{L}^{-}\rho_{L}^{-}\psi^{\ast}\|_{\infty}\leq\frac{\|{\psi^{\ast}}^{(m+1)}\|_{\infty}}{(m+1)!}\cdot h^{m+1}
\end{equation*}
holds if $p\geq m$, while
\begin{equation*}
\|\psi^{\ast}-\pi_{L}^{-}\rho_{L}^{-}\psi^{\ast}\|_{\infty}\leq(1+\Lambda_{m})\left(\frac{h}{2}\right)^{p+1}\frac{c_{p}'}{m^{p+1}}\cdot\|{\psi^{\ast}}^{(p+1)}\|_{\infty}
\end{equation*}
holds if $p\leq m$, with $c_{p}'$ a positive constant independent of $m$. On the other hand,
\begin{equation}\label{epsLpsi1'}
\|(\psi^{\ast}-\pi_{L}^{-}\rho_{L}^{-}\psi^{\ast})'\|_{\infty}\leq\frac{\|{\psi^{\ast}}^{(m+1)}\|_{\infty}}{m!}\cdot h^{m}
\end{equation}
holds if $p\geq m$, while
\begin{equation}\label{epsLpsi2'}
\|(\psi^{\ast}-\pi_{L}^{-}\rho_{L}^{-}\psi^{\ast})'\|_{\infty}\leq\Lambda_{m}\left(\frac{h}{2}\right)^{p}\frac{c_{p}''}{m^{p-1}}\cdot\|{\psi^{\ast}}^{(p+1)}\|_{\infty}
\end{equation}
holds if $p\leq m$, with $c_{p}''$ a positive constant independent of $m$. In particular, \eqref{epsLpsi1'} follows by adapting the classical proof of the Cauchy interpolation reminder to the first derivative of the reminder itself, while \eqref{epsLpsi2'} follows similarly to \eqref{p-r--I'} in the proof of Lemma \ref{l_p-r--I} in Appendix \ref{s_basic} thanks to \cite[page 331]{mo01} and \cite[Corollary 1.4.1]{riv81}.
\end{proof}
\noindent Let us note that even in the case that the periodic solution is smooth enough, i.e., $p>m+1$, and assuming the absence of a secondary discretization, it turns out that the consistency error in \eqref{epsLepsM} is $O(h^{m})$, in contrast to $O(h^{m+1})$ as obtained in \cite{mas15I} (see in particular the conclusions therein). According to formulation \eqref{bvp2}, it is clear from the proof of Theorem \ref{t_epsL} that this difference is due to the need of discretizing also the infinite-dimensional space $\mathbb{A}$, a circumstance that is only mentioned in \cite{mas15NM}, rather than being concretely elaborated, and, simultaneously, to the fact that functions in $\mathbb{A}$ must be differentiable due to the need of differentiating with respect to the period, as already remarked several times. After all, formulation \eqref{bvp1}, in which $\mathbb{A}$ is finite-dimensional, does not even satisfy all the required assumptions to develop this convergence analysis.
\subsection{Secondary consistency error}
\label{s_secondary}
The error term $\varepsilon_{M}$ in \eqref{epsM} concerns only the secondary discretization and, according to \eqref{Phi2} and \eqref{PhiM}, it reduces to
\begin{equation}\label{epsMU}
\varepsilon_{M}:=\omega^{\ast}\|G_{M}(v^{\ast}_{\cdot}\circ s_{\omega^{\ast}})-G(v^{\ast}_{\cdot}\circ s_{\omega^{\ast}})\|_{\mathbb{U}}.
\end{equation}
Of course this error is absent in case a secondary discretization is not needed. Conversely, as already remarked, the latter is necessary when the equation contains distributed delays, in which case it is determined by applying suitable quadrature rules to approximate the concerned integrals. Thus we can safely think that \eqref{epsMU} is basically a quadrature error, and in this respect we can assume to choose a formula that guarantees at least the same order of the primary consistency error (as far as $M$ varies proportionally to $L$). Alternatively, we can assume that \eqref{epsMU} falls below a given tolerance, say ${\rm TOL}$, and accept the fact that the consistency error decays down to ${\rm TOL}$ as fast as the primary consistency error.
{\color{black}\subsection{Regularity hypotheses in concrete cases}
\label{s_practical}
In realistic delay models it is frequent to encounter right-hand sides of the form
\begin{equation*}
G(\psi)=g(\psi(0),\psi(-\tau)).
\end{equation*}
It is easy to check that, in this case, \ref{T3} and \ref{T4} hold whenever both partial derivatives of $g$ exist and are continuous. Moreover, \ref{T5} holds as well if such derivatives are Lipschitz-continuous. Most frequently $g$ has an exact definition and does not need to be discretized but, if it did, then $g_M$ would need to fulfill the same regularity requirements for \ref{N3}-\ref{N5} to hold. Finally, \ref{N7} holds if the corresponding convergence condition holds for both the partial derivatives. The above observations can be extended to the case of multiple discrete delays, i.e., $G(\psi)=g(\psi(0),\psi(-\tau_1),\ldots,\psi(-\tau_n))$.

Right-hand sides featuring distributed delays are also common in literature. Remark \ref{r_N5} already includes some observations on the regularity requirements in order to satisfy the conditions up to \ref{T5} and \ref{N5}. Note that using a convergent interpolatory formula allows to satisfy conditions \ref{N6}-\ref{N7} as well.}
\subsection{Convergence of the spectral element method}
\label{s_SEM}
Let us recall from Section \ref{s_introduction} and Remark \ref{r_FEMSEM} that two methods can be considered as far as the convergence of the proposed piecewise collocation strategy is concerned. In particular, with reference to the primary discretization under \ref{N1} and \ref{N2}, the FEM consists in letting $L\rightarrow\infty$ while keeping $m$ fixed, while the SEM consists in letting $m\rightarrow\infty$ while keeping $L$ fixed. The analysis carried out in Section \ref{s_validation_n} and in Section \ref{s_primary} is presented for the FEM. Under this framework the final convergence result guarantees an error of magnitude $O(L^{-m})$ under suitable regularity conditions, see Theorem \ref{t_epsL}.

Unfortunately, as anticipated in Remark \ref{r_FEMSEM}, we are not able to guarantee the convergence of the SEM under this framework. Indeed, there are several points of the analysis either in Section \ref{s_validation_n} or in Section \ref{s_primary} (as well as in Appendix \ref{s_appendix}) which fails for the SEM based on \ref{N1} and \ref{N2}. Partial remedies can be advanced for some of these points by refining the requirements of regularity, yet some others seem not amenable of a definitive solution, or at least to a simple one. Below we comment on this and related aspects, but first it is remarkable to observe that some numerical experiments run by the authors indicate that indeed the SEM converges, so that it is our conclusion that an error analysis different from the one proposed in \cite{mas15I,mas15II,mas15NM} is necessary for the periodic case. We may investigate this issue in the future, recalling anyway that the FEM is preferred (and used) in practical implementations.

\bigskip
Concerning the analysis in Section \ref{s_validation_n}, the first point where the SEM fails is in the proof of Lemma \ref{l_LrfdeLM}, in particular due to \eqref{pr+KM+toK+} in Lemma \ref{l_prKMtoK}. In the case of the SEM indeed,
\begin{equation*}
\lim_{m,M\rightarrow\infty}\|(\pi_{L}^{+}\rho_{L}^{+}-I_{\mathbb{U}})\mathcal{K}_{M}^{\ast,+}\|_{\mathbb{U}\leftarrow\mathbb{U}}=0
\end{equation*}
should hold, but functions in the range of $\mathcal{K}_{M}^{\ast,+}$ are only continuous under \ref{N4} as observed in the proof of Lemma \ref{l_prKMtoK}. Nevertheless, the problem can be easily overcome by assuming \ref{N5}, which guarantees the functions in the range of $\mathcal{K}_{M}^{\ast,+}$ to be Lipschitz continuous (since $\mathcal{G}^{+}$ already sends to Lipschitz continuous functions) and thus we have as a classic result the uniform convergence of the interpolant with respect to increasing the polynomial degree $m$ as far as the collocation abscissae are of Chebyshev-type. The same point arises in the proofs of both Lemmas \ref{l_TLMtoT} and \ref{l_DPsiLMinv}.

A different issue emerges in the convergence of $\varepsilon_{L,M}^{+}$ in \eqref{epsLM+}, always in the proof of Lemma \ref{l_DPsiLMinv}, with specific reference to the first addend in the right-hand side of the latter (as for the second addend apply the arguments on the first point above). Indeed, it is well-known that $\Lambda_{m}$ in \eqref{p+r+} of Lemma \ref{l_p+r+-I} in Appendix \ref{s_basic} grows unbounded independently of the choice of the collocation abscissae, at least as $O(\log m)$ (and at most with the same order in case of Chebyshev-type nodes). Therefore, in order to ensure convergence one should assume to balance this growth with the rate of convergence of the secondary discretization, being the attention focused on the term $\pi_{L}^{+}\rho_{L}^{+}(\mathfrak{L}_{M}^{\ast}-\mathfrak{L}^{\ast})$. Since primary and secondary discretizations can be chosen independently, we can consider this balance a reasonable option. Of course, if a secondary discretization is not required, the term is not even present and the issue becomes meaningless. Let us note that similar questions appear in the convergence of $\varepsilon_{\omega,L,M}$ in \eqref{epsLM}. In the latter also the convergence of $\omega_{L,M}$ to $\omega$ is required, see Proposition \ref{p_Ax*2LM-Ax*2}, whose validity for the SEM we discuss next.

The proof of \eqref{xiLM1toxi1} in Proposition \ref{p_Ax*2LM-Ax*2} holds for the FEM. The convergence of $\xi_{L,M,1}^{\ast}$ to $\xi_{1}^{\ast}$ depends on the four terms at the right-hand side of \eqref{xiLM1toxi1'}. In particular, for the first and the third ones, the same balance between primary and secondary discretizations mentioned above has to be considered. The second addend could be made vanishing by ensuring that the interpolation error $\|(\pi_{L}^{+}\rho_{L}^{+}-I_{\mathbb{U}})\mathfrak{M}_{M}^{\ast}\|_{\mathbb{U}}$ decays as fast as to override the growth of $\pi_{L}^{-}\rho_{L}^{-}$. The latter, according to \eqref{p-r-} of Lemma \ref{l_p-r--I} in Appendix \ref{s_basic}, grows at best as $O(m\log m)$ for Chebyshev-type nodes. Consequently, $\mathfrak{M}_{M}^{\ast}$ should be at least continuously differentiable with Lipschitz continuous first derivative, thus guaranteeing that the above interpolation error is $O(\log m/m^{2})$ and the sought balance is scored. We are thus left with the fourth term, which concerns the interpolation error in $\mathbb{A}$ of the function $\int_0^{1}[T^{\ast}(1,s)X_{0}]\mathfrak{M}^{\ast}(s)\dd s$. As this is the state at $1$ of \eqref{ivpvLM*}, it is not difficult to argue  that the above map has a Lipschitz continuous first derivative under \ref{T5}, so that also this last term may vanish. For this to happen we must choose the abscissae $c_{1},\ldots,c_{m-1}$ corresponding to Chebyshev-type zeros, as well as $c_{m}=1$. Only in this case in fact, we can apply \cite[Theorem 4.2.11]{mn08} to replace \eqref{p-r--I'} in Lemma \ref{l_p-r--I} in Appendix \ref{s_basic} with $\|(\pi_{L}^{-}\rho_{L}^{-}\psi-\psi)'\|_{\infty}\leq c\Lambda_{m}E_{m-1}(\psi')$, otherwise another factor $m$ in front appears, thus requiring a degree of regularity that cannot be obtained when the same analysis is carried-out for $\xi_{L,M,2}^{\ast}$ (see \eqref{err_u0} and the relevant comments).

Despite the above efforts, to complete the proof of Proposition \ref{p_Ax*2LM-Ax*2} boundedness of $\psi_{L,M}$ is required, and the latter becomes mandatory for Lemma \ref{l_DPsiLMinv} to hold as evident from \eqref{DPdiinv}. But it is equally evident that such boundedness is not guaranteed by \eqref{psiLMbounded} because of \eqref{p-r-} of Lemma \ref{l_p-r--I} in Appendix \ref{s_basic}. Yet it could well be that the norm of $[D\Psi_{L,M}(u^{\ast},\psi^{\ast},\omega^{\ast})]^{-1}$ grows with $m$, but not as fast as the (square root of the) consistency error $\|\Psi_{L,M}(u^{\ast},\psi^{\ast},\omega^{\ast})\|_{\mathbb{U}\times\mathbb{A}\times\mathbb{B}}$ decays, recall in fact \eqref{DPsiLMlim} in Proposition \ref{p_CS2b}. The hypothesis is not far from being reasonable, given that the consistency error depends on the regularity of the periodic solution at hands, in view of the similar result of Theorem \ref{t_epsL} for the SEM. In any case, all this requires a much more focused analysis, that, as already remarked, is out of the scopes of the present work. Let us anyway confirm that experimental tests seem to validate the possibility of (a spectrally accurate) convergence of the SEM.
\section{Concluding remarks}
\label{s_concluding}
Computing periodic solutions is a key issue in the dynamical analysis of systems. Piecewise orthogonal collocation is perhaps the most used technique, especially in a continuation framework. This paper is an attempt to furnish a fully-detailed and complete error analysis to prove the convergence of this method in the context of RFDEs.

The main result is given in terms of the FEM method, whose error behaves as $O(L^{-m})$ for $m$ the (fixed) degree of the piecewise polynomial and $L$ the (increasing) number of mesh intervals. Although this was largely expected, given the abundance of experimental results in the literature, it is nowhere proved for a general equation in this class. To close this gap we followed the abstract approach proposed in \cite{mas15I,mas15II,mas15NM}, converting the BVP into an operator fixed point problem. The effort consisted in furnishing proofs of the validity of the (theoretical and numerical) assumptions required to reach the final convergence result in \cite{mas15NM}. Along the way, a main difficulty was represented by the period of the concerned solution, showing up as an unknown parameter linked to the course of time. The need for differentiating with respect to parameters led to additional smoothness requirements for the functional spaces involved in the analysis. Among the several consequences, the obtaining of one order less than what was proved in \cite{mas15I} is perhaps the most evident (viz. $m$ instead of $m+1$). Moreover, (some of) these smoothness requirements caused also the impossibility of applying the abstract framework of \cite{mas15NM} to the classical BVP formulation (i.e., \eqref{bvp1}), thus requiring to work under periodic constraints formulated on the state space (i.e., \eqref{bvp2}).

\bigskip
If the problem for RFDEs can be (optimistically) regarded as closed, this is far from holding true for either the neutral case, or the case of renewal (Volterra integral) equations. Let us remark that precisely the latter class inspired the current research (mainly driven by models of population dynamics), as it is not even considered in continuation packages from the computational standpoint -- not to speak about convergence. At a first sight the differentiability properties above mentioned seem to pose serious obstacles in both these cases, and perhaps a substantial effort is required in this direction. Nevertheless, the present work offers a first, solid background to start elaborating a succeeding strategy towards the proof of convergence. The authors plan to make this effort in the immediate future, also to substantiate the encouraging experimental results already obtained by extending the piecewise collocation to renewal equations, even coupled to RFDEs (for the target class of realistic models we have in mind see, e.g., \cite{bdgsv16,bresisc15,ssg16}).

\bigskip
As a final remark, we observe that the current contents are almost exclusively devoted to the theoretical analysis. We have intentionally neglected to report on implementation issues: on the one hand, the literature is not lacking from this point of view; on the other hand, the collocation proposed here is definitely more meaningful for neutral problems (as one collocates the derivative of the solution rather than the solution itself). Nevertheless, the analysis is restricted to the retarded case to focus on the peculiar aspects of periodic problems, as to understand how they affect convergence. It is anyway important to observe that exactly this collocation strategy is the natural candidate to treat renewal equations, whose solution can be seen somehow as the derivative of (the solution of) a neutral RFDE. Eventually, this extension would also coincide with that of \cite{elir00}, the latter representing perhaps the first choice to implement (and indeed the one we adopted to perform the numerical tests).
\appendix
\section{Auxiliary results}
\label{s_appendix}
In the following we keep on avoiding the use of the index $2$ to refer to formulation \eqref{bvp2} even though some of the results are already used in Section \ref{s_validation_t} where the latter is not yet discharged.
\subsection{Basic results on the primary discretization}
\label{s_basic}
\begin{lemma}\label{l_p+r+-I}
Let $\rho_{L}^{+}$, $\pi_{L}^{+}$ and $\Lambda_{m}$ be defined respectively in \eqref{rL+}, \eqref{pL+} and \eqref{lebesgue} under \ref{N1} and let $C^{+}$ be defined in \eqref{C+}. Then, under \ref{T2},
\begin{equation}\label{p+r+}
\|\pi_{L}^{+}\rho_{L}^{+}\|_{\mathbb{U}\leftarrow\mathbb{U}}\leq\Lambda_{m}
\end{equation}
holds for all positive integers $L$ and
\begin{equation}\label{p+r+-I}
\lim_{L\rightarrow\infty}\|\pi_{L}^{+}\rho_{L}^{+}u-u\|_{\mathbb{U}}=0
\end{equation}
holds for all $u\in C^{+}$.
\end{lemma}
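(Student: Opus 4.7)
The plan is to treat the two assertions in sequence via the per-piece Lagrange representation \eqref{lagrange}, exploiting the fact recalled in \eqref{lebesgue} that the local Lebesgue constant $\Lambda_{m,i}$ is independent of the subinterval index $i$.

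For the operator bound \eqref{p+r+}, I would fix $u\in\mathbb{U}$ and, on each subinterval $[t_{i-1}^{+},t_{i}^{+}]$, apply \eqref{lagrange} to $p:=\pi_{L}^{+}\rho_{L}^{+}u\in\Pi_{L,m}^{+}$. By the defining conditions \eqref{pL+} of $\pi_{L}^{+}$ together with \eqref{rL+} and the convention \eqref{t0+}, the value $p(t_{i,j}^{+})$ at each of the $m+1$ nodes entering the representation agrees with $u(t_{i,j}^{+})$, hence is bounded in modulus by $\|u\|_{\infty}$. Taking absolute values inside the Lagrange sum, recognizing the local Lebesgue constant, and invoking $\Lambda_{m,i}=\Lambda_{m}$ from \eqref{lebesgue}, one obtains
\begin{equation*}
\sup_{t\in[t_{i-1}^{+},t_{i}^{+}]}|p(t)|\leq\Lambda_{m}\max_{j=0,\ldots,m}|u(t_{i,j}^{+})|\leq\Lambda_{m}\|u\|_{\infty};
\end{equation*}
taking the maximum over $i=1,\ldots,L$ then yields \eqref{p+r+}.

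For the convergence \eqref{p+r+-I}, I would combine the reproduction of polynomials of degree $\leq m$ by the piecewise interpolation with the uniform continuity of $u\in C^{+}$ on the compact interval $[0,1]$. Fixing $i\in\{1,\ldots,L\}$ and choosing $q_{i}\equiv u(t_{i-1}^{+})\in\Pi_{m}$ (viewed as a global constant function), exactness gives $\pi_{L}^{+}\rho_{L}^{+}q_{i}=q_{i}$, so that by linearity $\pi_{L}^{+}\rho_{L}^{+}u-u=\pi_{L}^{+}\rho_{L}^{+}(u-q_{i})-(u-q_{i})$. The per-piece version of the bound used for \eqref{p+r+}, applied to $u-q_{i}$, together with the triangle inequality and the fact that all nodes $t_{i,j}^{+}$ lie in $[t_{i-1}^{+},t_{i}^{+}]$, gives
\begin{equation*}
\sup_{t\in[t_{i-1}^{+},t_{i}^{+}]}|(\pi_{L}^{+}\rho_{L}^{+}u-u)(t)|\leq(1+\Lambda_{m})\sup_{t\in[t_{i-1}^{+},t_{i}^{+}]}|u(t)-u(t_{i-1}^{+})|\leq(1+\Lambda_{m})\omega_{u}(h),
\end{equation*}
where $\omega_{u}$ denotes the modulus of continuity of $u$ and $h=1/L$. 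Since $\omega_{u}(h)\to 0$ as $L\to\infty$ and the bound is uniform in $i$, taking the maximum over $i$ yields \eqref{p+r+-I}.

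The single point of care, which I do not view as a real obstacle, is to verify carefully that the Lagrange expansion \eqref{lagrange} applied to $\pi_{L}^{+}\rho_{L}^{+}u$ indeed has its $m+1$ nodal values coincide with the values of $u$ at $t_{i,j}^{+}$ for $j=0,\ldots,m$ on each piece, consistently with the conventions \eqref{rL+}, \eqref{pL+} and \eqref{t0+}. Once this is secured uniformly in $i$, both statements reduce to a classical Lebesgue-constant estimate and a Weierstrass-type uniform-continuity argument, with no further technical difficulty.
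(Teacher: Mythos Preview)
Your approach is essentially the paper's: for \eqref{p+r+} you both invoke the per-piece Lagrange representation together with the identification $(\pi_{L}^{+}\rho_{L}^{+}u)(t_{i,j}^{+})=u(t_{i,j}^{+})$ for $j=0,\ldots,m$; for \eqref{p+r+-I} the paper uses the partition-of-unity identity $\sum_{j}\ell_{m,i,j}\equiv1$ to obtain the bound $\Lambda_{m}\,\omega(u;h)$, while you reach the equivalent $(1+\Lambda_{m})\,\omega_{u}(h)$ via comparison with the constant $q_{i}=u(t_{i-1}^{+})$. These are the same classical argument up to an immaterial constant.

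Your ``single point of care'' is, however, not a formality, and the paper's proof glosses over the very same step. The defining conditions \eqref{rL+}, \eqref{pL+} and \eqref{t0+} secure $(\pi_{L}^{+}\rho_{L}^{+}u)(t_{i,j}^{+})=u(t_{i,j}^{+})$ for $j=1,\ldots,m$ and for $(i,j)=(1,0)$; but for $i\geq2$ the value at $t_{i,0}^{+}=t_{i-1}^{+}$ is inherited by continuity from the previous piece and, since $c_{m}<1$ makes $t_{i-1}^{+}$ an extrapolation point for that piece, it need not coincide with $u(t_{i-1}^{+})$. Both your argument and the paper's written proof rely on this identification at $j=0$ for every $i$, so the concern you raise is a gap shared with the paper rather than something the definitions automatically deliver.
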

\begin{proof}
According to the notation of Section \ref{s_discretization},
\begin{equation*}
\pi_{L}^{+}\rho_{L}^{+}u(t)=\sum_{j=0}^m\ell_{m,i,j}(t)u(t_{i,j}^{+})
\end{equation*}
holds for $u\in\mathbb{U}$ and $t\in[t_{i-1}^+,t_i^+]$, $i=1,\ldots,L$. Then \eqref{p+r+} follows from
\begin{equation*}
\|\pi_{L}^{+}\rho_{L}^{+}u\|_{\mathbb{U}}\leq\max_{i=1,\ldots,L}\max_{t\in[t_{i-1}^{+},t_{i}^{+}]}\sum_{j=0}^m|\ell_{m,i,j}(t)|\|u\|_{\mathbb{U}}=\Lambda_{m}\|u\|_{\mathbb{U}}
\end{equation*}
thanks to \eqref{lebesgue}. As for \eqref{p+r+-I},
\begin{equation*}
\setlength\arraycolsep{0.1em}\begin{array}{rcl}
\pi_{L}^{+}\rho_{L}^{+}u(t)-u(t)&=&\displaystyle\sum_{j=0}^m\ell_{m,i,j}(t)u(t_{i,j}^{+})-\sum_{j=0}^m\ell_{m,i,j}(t)u(t)+\sum_{j=0}^m\ell_{m,i,j}(t)u(t)-u(t)\\[2mm]
&=&\displaystyle\sum_{j=0}^m\ell_{m,i,j}(t)[u(t_{i,j}^{+})-u(t)]+\left(\sum_{j=0}^m\ell_{m,i,j}(t)-1\right)u(t)\\[2mm]
&=&\displaystyle\sum_{j=0}^m\ell_{m,i,j}(t)[u(t_{i,j}^{+})-u(t)]
\end{array}
\end{equation*}
holds always for $t\in[t_{i-1}^{+},t_{i}^{+}]$, $i=1,\ldots,L$. Therefore $\|\pi_{L}^{+}\rho_{L}^{+}u-u\|_{\mathbb{U}}\leq\Lambda_{m}\omega(u;h)$, where $\omega$ denotes the modulus of continuity. The latter vanishes as $h\rightarrow0$ only if $u$ is at least continuous.
\end{proof}
\begin{lemma}\label{l_p-r--I}
Let $\rho_{L}^{-}$, $\pi_{L}^{-}$, $\Lambda_{m}$ and $\Lambda_{m}'$ be defined respectively in \eqref{rL-}, \eqref{pL-}, \eqref{lebesgue} and \eqref{lebesgue'} under \ref{N2} and let $C^{1,-}:=C^{1}([-1,0],\mathbb{R}^{d})$. Then, under \ref{T2},
\begin{equation}\label{p-r-}
\|\pi_{L}^{-}\rho_{L}^{-}\|_{\mathbb{A}\leftarrow\mathbb{A}}\leq\Lambda_{m}+\Lambda_{m}'
\end{equation}
holds for all positive integers $L$ and
\begin{equation}\label{p-r--I}
\lim_{L\rightarrow\infty}\|\pi_{L}^{-}\rho_{L}^{-}\psi-\psi\|_{\mathbb{A}}=0
\end{equation}
holds for all $\psi\in C^{1,-}$.
\end{lemma}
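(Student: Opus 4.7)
The proof mirrors that of Lemma \ref{l_p+r+-I}, but the use of the $B^{1,\infty}$-norm on $\mathbb{A}$ (see \eqref{normB1inf}) forces a separate treatment of the derivative. I would split
\begin{equation*}
\|\pi_L^-\rho_L^-\psi\|_{\mathbb{A}}=\|\pi_L^-\rho_L^-\psi\|_\infty+\|(\pi_L^-\rho_L^-\psi)'\|_\infty
\end{equation*}
and bound each summand separately. The sup-norm part yields $\|\pi_L^-\rho_L^-\psi\|_\infty\leq\Lambda_m\|\psi\|_\infty$ verbatim from Lemma \ref{l_p+r+-I} after substituting $\Omega_L^-$-quantities for $\Omega_L^+$-ones, which is legitimate since the Lebesgue constant $\Lambda_m$ in \eqref{lebesgue} is mesh- and interval-invariant.

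For the derivative part, which is the crux, I would differentiate the local Lagrange representation on $[t_{i-1}^-,t_i^-]$ to get $(\pi_L^-\rho_L^-\psi)'(t)=\sum_{j=0}^m\ell'_{m,i,j}(t)\psi(t_{i,j}^-)$. The key identity is that $\sum_{j=0}^m\ell_{m,i,j}\equiv 1$ (constants are reproduced), hence $\sum_{j=0}^m\ell'_{m,i,j}(t)\equiv 0$; using this to replace each $\psi(t_{i,j}^-)$ by the difference $\psi(t_{i,j}^-)-\psi(t_{i-1}^-)$, followed by the mean value theorem $|\psi(t_{i,j}^-)-\psi(t_{i-1}^-)|\leq h\|\psi'\|_\infty$, and finally invoking \eqref{lebesgue'} (with its natural $1/h$ scaling from the chain rule absorbed in $\Lambda_m'$), delivers $\|(\pi_L^-\rho_L^-\psi)'\|_\infty\leq\Lambda_m'\|\psi'\|_\infty$. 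Summing the two bounds produces \eqref{p-r-}.

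For \eqref{p-r--I} I would again argue componentwise. The sup-norm error $\|\pi_L^-\rho_L^-\psi-\psi\|_\infty$ is bounded by $\Lambda_m\,\omega(\psi;h)$, with $\omega(\cdot;\cdot)$ the modulus of continuity, vanishing since $\psi\in C^{1,-}\subset C([-1,0],\mathbb{R}^d)$ is uniformly continuous. For the derivative error I would exploit the additional identity $\sum_{j=0}^m\ell_{m,i,j}(t)\,t_{i,j}^-=t$ (degree-one polynomials are also reproduced); differentiating gives $\sum_{j=0}^m\ell'_{m,i,j}(t)(t_{i,j}^--t)=1$, so that writing $\psi(t_{i,j}^-)-\psi(t)=\int_t^{t_{i,j}^-}\psi'(s)\dd s$ and subtracting $\psi'(t)(t_{i,j}^--t)$ inside the sum yields
\begin{equation*}
(\pi_L^-\rho_L^-\psi)'(t)-\psi'(t)=\sum_{j=0}^m\ell'_{m,i,j}(t)\int_t^{t_{i,j}^-}[\psi'(s)-\psi'(t)]\dd s.
\end{equation*}
The integrand is $\leq\omega(\psi';h)$, the range of integration is $\leq h$, and combined with the $1/h$ hidden in $\ell'_{m,i,j}$ the whole expression is $\leq\Lambda_m'\,\omega(\psi';h)$, which vanishes as $L\to\infty$ by uniform continuity of $\psi'$.

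The anticipated obstacle is precisely this bookkeeping of the $h$-scaling in the derivative estimates: the natural $h^{-1}$-blow-up of $\|\ell'_{m,i,j}\|_\infty$ must be matched against the $O(h)$-smallness of either $\psi(t_{i,j}^-)-\psi(t_{i-1}^-)$ (for the bound) or of $[t,t_{i,j}^-]$ (for convergence). The two Lagrange reproduction identities $\sum_j\ell'_{m,i,j}\equiv 0$ and $\sum_j\ell'_{m,i,j}(t)(t_{i,j}^--t)\equiv 1$ are the tools that make this cancellation clean, without which one would obtain spurious $L$-dependent constants incompatible with both \eqref{p-r-} and \eqref{p-r--I}.
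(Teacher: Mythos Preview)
Your proof is correct. The bound \eqref{p-r-} and the sup-norm half of \eqref{p-r--I} match the paper's argument (which simply says ``analogous to Lemma~\ref{l_p+r+-I}''). For the derivative half of \eqref{p-r--I}, however, you take a genuinely different route. The paper invokes an external estimate from \cite{mo01},
\[
\|(\pi_{L}^{-}\rho_{L}^{-}\psi-\psi)'\|_{\infty}\leq c(m+1)\Lambda_{m}E_{m-1}(\psi'),
\]
and then Jackson's inequality $E_{m-1}(\psi')\leq 6\,\omega(\psi';h/2m)$ from \cite{riv81}. Your argument is more elementary and self-contained: the two reproduction identities $\sum_{j}\ell'_{m,i,j}\equiv 0$ and $\sum_{j}\ell'_{m,i,j}(t)(t_{i,j}^{-}-t)\equiv 1$ reduce the derivative error directly to $\Lambda_{m}'\,\omega(\psi';h)$, with the $h^{-1}$ from $\ell'_{m,i,j}$ canceling exactly against the $O(h)$ length of $[t,t_{i,j}^{-}]$. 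Your approach gives a cleaner constant ($\Lambda_{m}'$ versus $6c(m{+}1)\Lambda_{m}$) and avoids any appeal to best-approximation theory; the paper's route, on the other hand, connects the estimate to the quantity $E_{m-1}(\psi')$, which is what it later needs when discussing the SEM in Section~\ref{s_SEM} (where $m\to\infty$ with $h$ fixed and Jackson's decay in $m$ becomes the relevant mechanism).

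One minor wording issue: your parenthetical ``with its natural $1/h$ scaling from the chain rule absorbed in $\Lambda_m'$'' is slightly misleading, since by the paper's definition \eqref{lebesgue'} the constant $\Lambda_{m}'$ lives on the reference interval $[0,1]$ and does \emph{not} contain the $1/h$; rather, the $1/h$ coming from $\ell'_{m,i,j}(t)=h^{-1}\ell'_{m,j}((t-t_{i-1}^{-})/h)$ cancels against the $h$ from the mean value theorem (resp.\ the integration length). The computation you carry out is correct; only the phrasing should be tightened.
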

\begin{proof}
The proof of \eqref{p-r-} is analogous to that of \eqref{p+r+} in Lemma \ref{l_p+r+-I} once considered that $\|\cdot\|_{\mathbb{A}}$ is given by the second of \eqref{normB1inf}. As for \eqref{p-r--I}, $\|\pi_{L}^{-}\rho_{L}^{-}\psi-\psi\|_{\infty}\leq\Lambda_{m}\omega(\psi;h)$, follows similarly by the proof of \eqref{p+r+-I} in Lemma \ref{l_p+r+-I}, while
\begin{equation}\label{p-r--I'}
\|(\pi_{L}^{-}\rho_{L}^{-}\psi-\psi)'\|_{\infty}\leq c(m+1)\Lambda_{m}E_{m-1}(\psi')
\end{equation}
holds for some positive constant $c$ thanks to \cite[page 331]{mo01}, where $E_{m}(f)$ denotes the best uniform approximation error of $f$ with (piecewise) polynomials of degree $m$. As for the latter $E_{m-1}(\psi')\leq6\omega(\psi',h/2m)$ holds thanks to \cite[Corollary 1.4.1]{riv81}, so that it vanishes as $h\rightarrow0$ only if $\psi$ is at least continuously differentiable.
\end{proof}
\begin{corollary}\label{c_PLRL}
Let $R_{L}$, $P_{L}$, $\Lambda_{m}$ and $\Lambda_{m}'$ be defined respectively in \eqref{RL}, \eqref{PL}, \eqref{lebesgue} and \eqref{lebesgue'} under \ref{N1} and \ref{N2}. Then
\begin{equation}\label{PLRL}
\|P_{L}R_{L}\|_{\mathbb{U}\times\mathbb{A}\times\mathbb{B}\leftarrow\mathbb{U}\times\mathbb{A}\times\mathbb{B}}\leq\max\{\Lambda_{m}+\Lambda'_m,1\}
\end{equation}
holds for all positive integers $L$.
\end{corollary}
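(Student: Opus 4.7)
The plan is to obtain the bound as an immediate combination of Lemmas \ref{l_p+r+-I} and \ref{l_p-r--I}, together with the explicit block-diagonal structure of $P_L R_L$ and the max-norm convention on product spaces. First I would unfold the definitions \eqref{RL} and \eqref{PL} to write
\begin{equation*}
P_L R_L(u,\psi,\omega) = (\pi_L^+\rho_L^+ u,\; \pi_L^-\rho_L^- \psi,\; \omega),
\end{equation*}
which shows that $P_L R_L$ acts component-wise on the three factors, reducing to $\pi_L^+\rho_L^+$ on $\mathbb{U}$, $\pi_L^-\rho_L^-$ on $\mathbb{A}$, and the identity on $\mathbb{B}$.

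Next, by the product-norm convention \eqref{normprod}, the operator norm on $\mathbb{U}\times\mathbb{A}\times\mathbb{B}$ reduces to the maximum of the three component operator norms. Applying \eqref{p+r+} gives $\|\pi_L^+\rho_L^+\|_{\mathbb{U}\leftarrow\mathbb{U}}\leq\Lambda_m$; applying \eqref{p-r-} gives $\|\pi_L^-\rho_L^-\|_{\mathbb{A}\leftarrow\mathbb{A}}\leq\Lambda_m+\Lambda'_m$; and the identity on $\mathbb{B}$ contributes $1$. Taking the maximum and using $\Lambda_m\leq\Lambda_m+\Lambda'_m$ yields \eqref{PLRL}.

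There is essentially no obstacle here: the content of the corollary is already carried by the two preceding lemmas, and the only point worth being explicit about is that \eqref{normprod}, applied twice (once on the domain, once on the codomain), indeed turns the product operator norm into the maximum of the component norms. I would state this observation in one line and then conclude.
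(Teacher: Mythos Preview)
Your proposal is correct and follows exactly the same route as the paper: unfold $P_LR_L$ into its block-diagonal action, use the max-norm convention \eqref{normprod} to reduce to the component operator norms, invoke \eqref{p+r+} and \eqref{p-r-}, and simplify the resulting maximum. There is nothing to add.
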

\begin{proof}
It holds
\begin{equation*}
\setlength\arraycolsep{0.1em}\begin{array}{rcl}
\|P_{L}R_{L}\|_{\mathbb{U}\times\mathbb{A}\times\mathbb{B}\leftarrow\mathbb{U}\times\mathbb{A}\times\mathbb{B}}&=&\max\{\|\pi_{L}^{+}\rho_{L}^{+}\|_{\mathbb{U}\leftarrow\mathbb{U}},\|\pi_{L}^{-}\rho_{L}^{-}\|_{\mathbb{A}\leftarrow\mathbb{A}},\|I_{\mathbb{B}}\|_{\mathbb{B}\leftarrow\mathbb{B}}\}\\[2mm]
&\leq&\max\{\Lambda_{m},\Lambda_{m}+\Lambda'_m,1\}\\[2mm]
&=&\max\{\Lambda_{m}+\Lambda'_m,1\}
\end{array}
\end{equation*}
thanks to \eqref{normprod}, \eqref{p+r+} in Lemma \ref{l_p+r+-I} and \eqref{p-r-} in Lemma \ref{l_p-r--I}.
\end{proof}
\subsection{Other preparatory results}
\label{s_other}
\begin{lemma}\label{l_v*}
Let $(u^{\ast},\psi^{\ast},\omega^{\ast})\in\mathbb{U}\times\mathbb{A}\times\mathbb{B}$ be a fixed point of $\Phi$ in \eqref{Phi2}. Then, under \ref{T2}, $v^{\ast}:=\mathcal{G}(u^{\ast},\psi^{\ast})$ for $\mathcal{G}$ in \eqref{G2} is Lipschitz continuous, in particular
\begin{equation*}
|v^{\ast}(t_{1})-v^{\ast}(t_{2})|\leq\|(u^{\ast},\psi^{\ast},\omega^{\ast})\|_{\mathbb{U}\times\mathbb{A}\times\mathbb{B}}\cdot|t_{1}-t_{2}|
\end{equation*}
holds for all $t_{1},t_{2}\in[-1,1]$.
\end{lemma}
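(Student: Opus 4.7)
The plan is to split into three cases according to the position of $t_{1},t_{2}\in[-1,1]$ relative to the joining point $t=0$ where the two branches of the Green operator $\mathcal{G}$ in \eqref{G2} meet. Without loss of generality assume $t_{1}\leq t_{2}$.

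\emph{Case 1: $t_{1},t_{2}\in[0,1]$.} From the first branch of \eqref{G2},
\begin{equation*}
|v^{\ast}(t_{2})-v^{\ast}(t_{1})|=\Bigl|\int_{t_{1}}^{t_{2}}u^{\ast}(s)\dd s\Bigr|\leq\|u^{\ast}\|_{\infty}\,|t_{2}-t_{1}|,
\end{equation*}
and $\|u^{\ast}\|_{\infty}=\|u^{\ast}\|_{\mathbb{U}}$ by \ref{T2}.

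\emph{Case 2: $t_{1},t_{2}\in[-1,0]$.} From the second branch of \eqref{G2}, $v^{\ast}(t_{i})=\psi^{\ast}(t_{i})$, and since $\psi^{\ast}\in B^{1,\infty}([-1,0],\mathbb{R}^{d})$ by \ref{T2}, the mean value theorem yields
\begin{equation*}
|v^{\ast}(t_{2})-v^{\ast}(t_{1})|\leq\|{\psi^{\ast}}'\|_{\infty}\,|t_{2}-t_{1}|,
\end{equation*}
with $\|{\psi^{\ast}}'\|_{\infty}\leq\|\psi^{\ast}\|_{\mathbb{A}}$.

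\emph{Case 3: $t_{1}\in[-1,0]$ and $t_{2}\in[0,1]$.} Since both branches of $\mathcal{G}$ agree with $\psi^{\ast}(0)$ at $t=0$, we bridge through $0$:
\begin{equation*}
|v^{\ast}(t_{2})-v^{\ast}(t_{1})|\leq|v^{\ast}(t_{2})-\psi^{\ast}(0)|+|\psi^{\ast}(0)-v^{\ast}(t_{1})|\leq\max\{\|u^{\ast}\|_{\infty},\|{\psi^{\ast}}'\|_{\infty}\}\,(t_{2}-t_{1}),
\end{equation*}
using $t_{2}-t_{1}=|t_{2}|+|t_{1}|$ in this case.

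Finally, from \eqref{normprod} the product norm satisfies
\begin{equation*}
\|(u^{\ast},\psi^{\ast},\omega^{\ast})\|_{\mathbb{U}\times\mathbb{A}\times\mathbb{B}}\geq\max\{\|u^{\ast}\|_{\mathbb{U}},\|\psi^{\ast}\|_{\mathbb{A}}\}\geq\max\{\|u^{\ast}\|_{\infty},\|{\psi^{\ast}}'\|_{\infty}\},
\end{equation*}
so the Lipschitz constant in each case is dominated by $\|(u^{\ast},\psi^{\ast},\omega^{\ast})\|_{\mathbb{U}\times\mathbb{A}\times\mathbb{B}}$, yielding the claim. No step is really an obstacle here: the only subtlety is that one must use $B^{1,\infty}$ regularity of $\psi^{\ast}$ (required by \ref{T2}) to control the left branch, and consistency of the two branches at $0$ (built into $\mathcal{G}_{2}$) to handle the mixed case; the fixed-point structure itself is not invoked beyond the membership $(u^{\ast},\psi^{\ast})\in\mathbb{U}\times\mathbb{A}$.
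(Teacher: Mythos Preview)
Your proof is correct and follows essentially the same idea as the paper's: both rest on the observation that on $[0,1]$ the derivative of $v^{\ast}$ is $u^{\ast}$ and on $[-1,0]$ it is ${\psi^{\ast}}'$, so the Lipschitz constant is $\max\{\|u^{\ast}\|_{\infty},\|{\psi^{\ast}}'\|_{\infty}\}$, which is dominated by the product norm. The only difference is presentational: the paper invokes directly that $v^{\ast}\in\mathbb{V}=B^{1,\infty}([-1,1],\mathbb{R}^{d})$ has bounded (right) derivative and hence is Lipschitz with constant $\|{v^{\ast}}'\|_{\infty}$, whereas you unpack this into an explicit three-case argument bridging through $t=0$.
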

\begin{proof}
By the choice of $\mathbb{V}_{2}$ in \ref{T2}, the derivative ${v^{\ast}}'$ of $v^{\ast}$ is bounded, hence $v^{\ast}$ is Lipschitz continuous with Lipschitz constant $\|{v^{\ast}}'\|_{\infty}$. By the choice of $\mathbb{A}$ in \ref{T2}, ${v^{\ast}}'$ reads
\begin{equation*}
{v^{\ast}}'(t):=\begin{cases}
\displaystyle u^{\ast}(t),&t\in[0,1],\\[2mm]
{\psi^{\ast}}'(t),&t\in[-1,0)
\end{cases}
\end{equation*}
and hence
\[
\|{v^{\ast}}'\|_{\infty}=\max\{\|u^{\ast}\|_{\infty},\|{\psi^{\ast}}'\|_{\infty}\}\leq\max\{\|u^{\ast}\|_{\mathbb{U}},\|{\psi^{\ast}}\|_{\mathbb{A}}\}\leq\|(u^{\ast},\psi^{\ast},\omega^{\ast})\|_{\mathbb{U}\times\mathbb{A}\times\mathbb{B}}.
\]
\end{proof}
\begin{lemma}\label{l_v*'}
Let $(u^{\ast},\psi^{\ast},\omega^{\ast})\in\mathbb{U}\times\mathbb{A}\times\mathbb{B}$ be a fixed point of $\Phi$ in \eqref{Phi2} and $v^{\ast}:=\mathcal{G}(u^{\ast},\psi^{\ast})$ for $\mathcal{G}$ in \eqref{G2}. Then, under \ref{T2} and \ref{T5}, $u^{\ast}$, ${\psi^{\ast}}'$ and ${v^{\ast}}'$ are Lipschitz continuous, in particular
\begin{equation*}
|{v^{\ast}}'(t_{1})-{v^{\ast}}'(t_{2})|\leq\omega^{\ast}\kappa_{2,1}\|(u^*,\psi^*,\omega^*)\|_{\mathbb{U}\times\mathbb{A}\times\mathbb{B}}\cdot|t_{1}-t_{2}|
\end{equation*}
holds for all $t_{1},t_{2}\in[-1,1]$ and $\kappa_{2,1}$ in \eqref{K21}.
\end{lemma}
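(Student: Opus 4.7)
The plan is to proceed in three stages, starting from the fixed-point identity for $u^{\ast}$, then transferring the Lipschitz bound to ${\psi^{\ast}}'$ via the boundary condition, and finally gluing the two pieces of ${v^{\ast}}'$ across $t=0$.

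First, since $(u^{\ast},\psi^{\ast},\omega^{\ast})$ is a fixed point of \eqref{Phi2}, the first component yields $u^{\ast}(t)=\omega^{\ast}G(v^{\ast}_{t}\circ s_{\omega^{\ast}})$ for every $t\in[0,1]$. For $t_1,t_2\in[0,1]$ I would write
\begin{equation*}
u^{\ast}(t_1)-u^{\ast}(t_2)=\omega^{\ast}\int_{0}^{1}DG\bigl(v^{\ast}_{t_2}\circ s_{\omega^{\ast}}+\lambda(v^{\ast}_{t_1}-v^{\ast}_{t_2})\circ s_{\omega^{\ast}}\bigr)\,(v^{\ast}_{t_1}-v^{\ast}_{t_2})\circ s_{\omega^{\ast}}\dd\lambda
\end{equation*}
through the integral form of the mean value theorem for Fréchet-differentiable maps. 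By \ref{T5}, as long as the segment lies in $\overline{B}(v^{\ast}_{t_2}\circ s_{\omega^{\ast}},r)$, the norm of $DG$ is bounded by $\kappa_{2,1}+\kappa\,\|(v^{\ast}_{t_1}-v^{\ast}_{t_2})\circ s_{\omega^{\ast}}\|_{\mathtt{Y}}$, and Lemma \ref{l_v*} gives $\|(v^{\ast}_{t_1}-v^{\ast}_{t_2})\circ s_{\omega^{\ast}}\|_{\mathtt{Y}}\leq\|(u^{\ast},\psi^{\ast},\omega^{\ast})\|_{\mathbb{U}\times\mathbb{A}\times\mathbb{B}}\cdot|t_1-t_2|$. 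Combining yields a local Lipschitz estimate with constant $\omega^{\ast}\kappa_{2,1}\|(u^{\ast},\psi^{\ast},\omega^{\ast})\|_{\mathbb{U}\times\mathbb{A}\times\mathbb{B}}$ up to a term of order $|t_1-t_2|$; passing to the lim sup as $|t_1-t_2|\to 0$, followed by a standard chaining argument on the compact interval $[0,1]$, promotes this to the global Lipschitz bound with the sharp constant stated in the lemma.

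Next, the boundary condition embedded in the fixed-point equation reads $\psi^{\ast}=v^{\ast}_{1}$, which through the definition \eqref{G2} of $\mathcal{G}$ yields $\psi^{\ast}(\theta)=\psi^{\ast}(0)+\int_{0}^{1+\theta}u^{\ast}(s)\dd s$ for $\theta\in[-1,0]$. Since $u^{\ast}$ has already been proved Lipschitz (in particular continuous), differentiating gives ${\psi^{\ast}}'(\theta)=u^{\ast}(1+\theta)$ pointwise on $[-1,0]$, so ${\psi^{\ast}}'$ inherits the same Lipschitz constant as $u^{\ast}$. Finally, ${v^{\ast}}'$ coincides with $u^{\ast}(t)$ on $[0,1]$ and with $u^{\ast}(1+t)$ on $[-1,0)$, and matches continuously at $t=0$ because $u^{\ast}(0)=\omega^{\ast}G(v^{\ast}_{0}\circ s_{\omega^{\ast}})$ and $u^{\ast}(1)=\omega^{\ast}G(v^{\ast}_{1}\circ s_{\omega^{\ast}})$ agree under the periodic constraint $v^{\ast}_{0}=v^{\ast}_{1}$. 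A pasting argument for Lipschitz functions on adjacent closed intervals with matching endpoints transfers the constant to the whole of $[-1,1]$.

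The one delicate point I expect is the extraction of the sharp constant $\omega^{\ast}\kappa_{2,1}\|(u^{\ast},\psi^{\ast},\omega^{\ast})\|_{\mathbb{U}\times\mathbb{A}\times\mathbb{B}}$ rather than an inflated one of the form $\omega^{\ast}(\kappa_{2,1}+\kappa r)\|(u^{\ast},\psi^{\ast},\omega^{\ast})\|_{\mathbb{U}\times\mathbb{A}\times\mathbb{B}}$. Under \ref{T5} the segment bound naturally produces the $\kappa r$ overshoot, so the proof must rely on the continuity of $DG$ at the fixed-point trajectory (guaranteed by \ref{T5}) combined with a compactness/covering argument on $[0,1]$ to reduce the admissible $r$ to arbitrary smallness; the remaining steps (pasting across $\theta=0$, deriving ${\psi^{\ast}}'$ from the boundary condition) are direct consequences of the fixed-point structure and require no further analytic input.
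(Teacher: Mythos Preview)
Your proposal is correct and follows essentially the same three-stage route as the paper: Lipschitz bound for $u^{\ast}$ from the fixed-point identity and Lemma~\ref{l_v*}, transfer to ${\psi^{\ast}}'$ via the periodic boundary condition, then gluing across $t=0$. The paper is much terser---it simply writes $|u^{\ast}(t_{1})-u^{\ast}(t_{2})|\leq\omega^{\ast}\kappa_{2,1}\|v^{\ast}_{t_{1}}\circ s_{\omega^{\ast}}-v^{\ast}_{t_{2}}\circ s_{\omega^{\ast}}\|_{\mathtt{Y}}$ and invokes Lemma~\ref{l_v*}, without discussing the sharp-constant issue you raise; your chaining argument is a legitimate way to fill that in, though an alternative that avoids it entirely is to integrate $DG$ along the trajectory curve $s\mapsto v^{\ast}_{s}\circ s_{\omega^{\ast}}$ (on which $\|DG\|\leq\kappa_{2,1}$ holds pointwise by definition of $\kappa_{2,1}$) rather than along straight segments in $\mathtt{Y}$.
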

\begin{proof}
Thanks to \ref{T5}, the constant $\kappa_{2,1}$ in \eqref{K21} is well-defined, and, for $t_1,t_{2}\in[0,1]$, from \eqref{rfdet} we get
\begin{equation*}
\setlength\arraycolsep{0.1em}\begin{array}{rcl}
|u^{\ast}(t_{1})-u^{\ast}(t_{2})|&\leq&\omega^{\ast}\kappa_{2,1}\|v^{\ast}_{t_{1}}\circ s_{\omega^{\ast}}-v^{\ast}_{t_{2}}\circ s_{\omega^{\ast}}\|_{\mathtt{Y}}.
\end{array}
\end{equation*}
Therefore, $u^{\ast}$ is Lipschitz continuous since so is $v^{\ast}$ by Lemma \ref{l_v*}, the Lipschitz constant being $\omega^{\ast}\kappa_{2,1}\|(u^*,\psi^*,\omega^*)\|_{\mathbb{U}\times\mathbb{A}\times\mathbb{B}}$ thanks to the latter. The same holds for ${\psi^{\ast}}'$ by periodicity and, in turn, for ${v^{\ast}}'$ being it the continuous junction of two Lipschitz continuous functions with the same Lipschitz constant.
\end{proof}
\begin{lemma}\label{l_prKMtoK}
Let $\rho_{L}^{+}$ and $\pi_{L}^{+}$ be defined respectively in \eqref{rL+} and \eqref{pL+} under \ref{N1} and $\mathcal{K}^{\ast,+}$, $\mathcal{K}^{\ast,-}$, $\mathcal{K}_{M}^{\ast,+}$ and $\mathcal{K}_{M}^{\ast,-}$ be defined in \eqref{GKKM}. Then, under \ref{T2}, \ref{N4} and \ref{N7},
\begin{equation}\label{pr+KM+toK+}
\lim_{L,M\rightarrow\infty}\|\pi_{L}^{+}\rho_{L}^{+}\mathcal{K}_{M}^{\ast,+}-\mathcal{K}^{\ast,+}\|_{\mathbb{U}\leftarrow\mathbb{U}}=0,
\end{equation}
and
\begin{equation}\label{pr+KM-toK-}
\lim_{L,M\rightarrow\infty}\|\pi_{L}^{+}\rho_{L}^{+}\mathcal{K}_{M}^{\ast,-}-\mathcal{K}^{\ast,-}\|_{\mathbb{U}\leftarrow\mathbb{A}}=0.
\end{equation}
\end{lemma}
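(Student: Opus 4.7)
The plan is to verify both limits via a common decomposition. For \eqref{pr+KM+toK+} I would write
\begin{equation*}
\pi_{L}^{+}\rho_{L}^{+}\mathcal{K}_{M}^{\ast,+}-\mathcal{K}^{\ast,+} = \pi_{L}^{+}\rho_{L}^{+}(\mathcal{K}_{M}^{\ast,+}-\mathcal{K}^{\ast,+}) + (\pi_{L}^{+}\rho_{L}^{+}-I_{\mathbb{U}})\mathcal{K}^{\ast,+}
\end{equation*}
and treat the two summands separately. The first one is easy: combine the uniform bound $\|\pi_{L}^{+}\rho_{L}^{+}\|_{\mathbb{U}\leftarrow\mathbb{U}}\leq\Lambda_{m}$ from \eqref{p+r+} with the pointwise estimate
\begin{equation*}
|(\mathcal{K}_{M}^{\ast,+}u-\mathcal{K}^{\ast,+}u)(t)| \leq \omega^{\ast}\|DG_{M}(v^{\ast}_{t}\circ s_{\omega^{\ast}})-DG(v^{\ast}_{t}\circ s_{\omega^{\ast}})\|_{\mathbb{R}^{d}\leftarrow\mathtt{Y}}\cdot\|(\mathcal{G}^{+}u)_{t}\circ s_{\omega^{\ast}}\|_{\mathtt{Y}},
\end{equation*}
together with $\|\mathcal{G}^{+}u\|_{\infty}\leq\|u\|_{\mathbb{U}}$ (from \eqref{G2}) and hypothesis \ref{N7} to get vanishing as $M\to\infty$, uniformly in $L$ since $\Lambda_{m}$ is independent of $L$.

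For the second summand, the key step is to show that the image $\mathcal{K}^{\ast,+}(\overline{B}(0,1))$ is uniformly equicontinuous in $\mathbb{U}$. I would split, for $t_{1},t_{2}\in[0,1]$,
\begin{equation*}
(\mathcal{K}^{\ast,+}u)(t_{1})-(\mathcal{K}^{\ast,+}u)(t_{2}) = \omega^{\ast}[DG(v^{\ast}_{t_{1}}\circ s_{\omega^{\ast}})-DG(v^{\ast}_{t_{2}}\circ s_{\omega^{\ast}})](\mathcal{G}^{+}u)_{t_{1}}\circ s_{\omega^{\ast}}
\end{equation*}
plus $\omega^{\ast}DG(v^{\ast}_{t_{2}}\circ s_{\omega^{\ast}})[(\mathcal{G}^{+}u)_{t_{1}}\circ s_{\omega^{\ast}}-(\mathcal{G}^{+}u)_{t_{2}}\circ s_{\omega^{\ast}}]$. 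The first piece vanishes uniformly in $u\in\overline{B}(0,1)$ as $|t_{1}-t_{2}|\to 0$ thanks to \ref{T4}: the trajectory $\{v^{\ast}_{t}\circ s_{\omega^{\ast}}:t\in[0,1]\}\subset\mathtt{Y}$ is compact, so $DG$ is uniformly continuous on it and $t\mapsto DG(v^{\ast}_{t}\circ s_{\omega^{\ast}})$ is uniformly continuous on $[0,1]$; meanwhile $\|(\mathcal{G}^{+}u)_{t_{1}}\|_{\mathtt{Y}}\leq\|u\|_{\mathbb{U}}$. For the second piece, $\mathcal{G}^{+}u$ is Lipschitz with constant $\|u\|_{\mathbb{U}}$, so $\|(\mathcal{G}^{+}u)_{t_{1}}\circ s_{\omega^{\ast}}-(\mathcal{G}^{+}u)_{t_{2}}\circ s_{\omega^{\ast}}\|_{\mathtt{Y}}\leq\|u\|_{\mathbb{U}}|t_{1}-t_{2}|$, and $\sup_{t}\|DG(v^{\ast}_{t}\circ s_{\omega^{\ast}})\|_{\mathbb{R}^{d}\leftarrow\mathtt{Y}}$ is finite by compactness. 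One obtains a modulus $\omega_{0}$ independent of $u$ with $|(\mathcal{K}^{\ast,+}u)(t_{1})-(\mathcal{K}^{\ast,+}u)(t_{2})|\leq\|u\|_{\mathbb{U}}\omega_{0}(|t_{1}-t_{2}|)$, and then the argument in the proof of \eqref{p+r+-I} gives $\|(\pi_{L}^{+}\rho_{L}^{+}-I_{\mathbb{U}})\mathcal{K}^{\ast,+}u\|_{\mathbb{U}}\leq\Lambda_{m}\omega_{0}(h)\|u\|_{\mathbb{U}}\to 0$ as $L\to\infty$ with $m$ fixed.

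The proof of \eqref{pr+KM-toK-} follows identically, the only new ingredient being that $\mathcal{G}^{-}\psi$ is continuous on $[-1,1]$ (equal to $\psi$ on $[-1,0]$ and constantly equal to $\psi(0)$ on $[0,1]$), hence Lipschitz with constant $\|\psi'\|_{\infty}\leq\|\psi\|_{\mathbb{A}}$ thanks to the choice of $\mathbb{A}$ in \ref{T2}. Replacing $\|u\|_{\mathbb{U}}$ with $\|\psi\|_{\mathbb{A}}$ throughout delivers the analogous equicontinuity and thus the limit.

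The main obstacle is precisely the uniform (in $\|u\|_{\mathbb{U}}\leq 1$) equicontinuity claim for $\mathcal{K}^{\ast,+}(\overline{B}(0,1))$: a naive pointwise argument would only give $\|(\pi_{L}^{+}\rho_{L}^{+}-I_{\mathbb{U}})\mathcal{K}^{\ast,+}u\|_{\mathbb{U}}\to 0$ for each fixed $u$, which is insufficient for operator-norm convergence. The joint use of \ref{T4} and the compactness of the periodic-orbit trajectory in $\mathtt{Y}$, combined with the Lipschitz regularity of $\mathcal{G}^{\pm}$, is what converts this into a uniform statement.
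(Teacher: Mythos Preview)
Your argument is correct and parallels the paper's, differing only in the order of the splitting: the paper writes
\[
\pi_{L}^{+}\rho_{L}^{+}\mathcal{K}_{M}^{\ast,+}-\mathcal{K}^{\ast,+}=(\pi_{L}^{+}\rho_{L}^{+}-I_{\mathbb{U}})\mathcal{K}_{M}^{\ast,+}+(\mathcal{K}_{M}^{\ast,+}-\mathcal{K}^{\ast,+})
\]
and then appeals to \eqref{p+r+-I} together with $\mathcal{K}_{M}^{\ast,+}\mathbb{U}\subseteq C^{+}$ for the first term. Your variant is in fact cleaner: since your interpolation term $(\pi_{L}^{+}\rho_{L}^{+}-I_{\mathbb{U}})\mathcal{K}^{\ast,+}$ carries no $M$, the equicontinuity argument you spell out need be done only once, whereas the paper's splitting tacitly requires the modulus of continuity of $\mathcal{K}_{M}^{\ast,+}u$ to be controlled uniformly in $M$ as well as in $\|u\|_{\mathbb{U}}\leq 1$ --- a point it does not make explicit. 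Your explicit equicontinuity step (splitting $(\mathcal{K}^{\ast,+}u)(t_{1})-(\mathcal{K}^{\ast,+}u)(t_{2})$ and using the Lipschitz property of $\mathcal{G}^{+}u$) is exactly what is needed to upgrade \eqref{p+r+-I} from pointwise to operator-norm convergence.

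One small hypothesis issue: you invoke \ref{T4} for the continuity of $t\mapsto DG(v^{\ast}_{t}\circ s_{\omega^{\ast}})$, but the lemma as stated assumes only \ref{T2}, \ref{N4} and \ref{N7}. This is easily repaired within those hypotheses: under \ref{N4} each map $t\mapsto DG_{M}(v^{\ast}_{t}\circ s_{\omega^{\ast}})$ is continuous on $[0,1]$, and by \ref{N7} these converge uniformly in $t$ to $t\mapsto DG(v^{\ast}_{t}\circ s_{\omega^{\ast}})$, so the limit is continuous (hence uniformly continuous) on $[0,1]$. With this remark in place of the appeal to \ref{T4}, your proof goes through verbatim; in any case, every use of the lemma in the paper occurs under \ref{T4}.
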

\begin{proof}
As for \eqref{pr+KM+toK+} we have
\begin{equation*}
\|\pi_{L}^{+}\rho_{L}^{+}\mathcal{K}_{M}^{\ast,+}-\mathcal{K}^{\ast,+}\|_{\mathbb{U}\leftarrow\mathbb{U}}\leq\|(\pi_{L}^{+}\rho_{L}^{+}-I_{\mathbb{U}})\mathcal{K}_{M}^{\ast,+}\|_{\mathbb{U}\leftarrow\mathbb{U}}+\|\mathcal{K}_{M}^{\ast,+}-\mathcal{K}^{\ast,+}\|_{\mathbb{U}\leftarrow\mathbb{U}}.
\end{equation*}
The second addend in the right-hand side above vanishes thanks to \ref{N7}. It thus also follows that $\mathcal{K}_{M}^{\ast,+}$ is uniformly bounded with respect to $M$. This in turn makes the first addend vanish as well thanks to \eqref{p+r+-I} of Lemma \ref{l_p+r+-I}, given that $\mathcal{K}_{M}^{\ast,+}\mathbb{U}\subseteq C^{+}$ as it follows from \eqref{G2} through the definition of $\mathcal{G}^{+}$ in \eqref{GKKM} and the continuity of $\mathfrak{L}_{M}^{\ast}$ under \ref{N4}. Similar arguments hold for \eqref{pr+KM-toK-}.
\end{proof}
\begin{lemma}\label{l_prLMMtoLM}
Let $\mathfrak{L}^{\ast}$ and $\mathfrak{M}^{\ast}$ be defined in \eqref{L*M*} and $\mathfrak{L}_{M}^{\ast}$ and $\mathfrak{M}_{M}^{\ast}$ be defined in \eqref{LM*MM*}. Then, under \ref{N6} and \ref{N7},
\begin{equation}\label{LMtoL}
\lim_{M\rightarrow\infty}\|\mathfrak{L}_{M}^{\ast}-\mathfrak{L}^{\ast}\|_{\mathcal{L}(\mathtt{Y},\mathbb{R}^{d})\leftarrow[0,1]}=0
\end{equation}
and
\begin{equation}\label{MMtoM}
\lim_{M\rightarrow\infty}\|\mathfrak{M}_{M}^{\ast}-\mathfrak{M}^{\ast}\|_{\infty}=0.
\end{equation}
\end{lemma}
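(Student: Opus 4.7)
The plan is to reduce both limits directly to hypotheses \ref{N6} and \ref{N7} via the triangle inequality, exploiting the linearity in the scalar $\omega^{\ast}$ and the uniform boundedness of the factors appearing in the definition \eqref{M2} of $\mathfrak{M}$.

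First I would establish \eqref{LMtoL}. By the definitions \eqref{L2}--\eqref{LM*MM*}, for every $t\in[0,1]$ we have
\begin{equation*}
\mathfrak{L}_{M}^{\ast}(t)-\mathfrak{L}^{\ast}(t)=\omega^{\ast}\bigl[DG_{M}(v^{\ast}_{t}\circ s_{\omega^{\ast}})-DG(v^{\ast}_{t}\circ s_{\omega^{\ast}})\bigr],
\end{equation*}
so taking the induced operator norm $\|\cdot\|_{\mathbb{R}^{d}\leftarrow\mathtt{Y}}$ and the supremum over $t\in[0,1]$ yields
\begin{equation*}
\|\mathfrak{L}_{M}^{\ast}-\mathfrak{L}^{\ast}\|_{\mathcal{L}(\mathtt{Y},\mathbb{R}^{d})\leftarrow[0,1]}=\omega^{\ast}\sup_{t\in[0,1]}\|DG_{M}(v^{\ast}_{t}\circ s_{\omega^{\ast}})-DG(v^{\ast}_{t}\circ s_{\omega^{\ast}})\|_{\mathbb{R}^{d}\leftarrow\mathtt{Y}},
\end{equation*}
which vanishes as $M\rightarrow\infty$ exactly by \ref{N7}.

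For \eqref{MMtoM}, I would apply the triangle inequality to the defining expression \eqref{M2M} versus \eqref{M2}, writing
\begin{equation*}
|\mathfrak{M}_{M}^{\ast}(t)-\mathfrak{M}^{\ast}(t)|\leq|G_{M}(v^{\ast}_{t}\circ s_{\omega^{\ast}})-G(v^{\ast}_{t}\circ s_{\omega^{\ast}})|+\frac{1}{\omega^{\ast}}\bigl|[\mathfrak{L}_{M}^{\ast}(t)-\mathfrak{L}^{\ast}(t)]\,{v^{\ast}_{t}}'\circ s_{\omega^{\ast}}\cdot s_{\omega^{\ast}}\bigr|.
\end{equation*}
The first term converges to $0$ uniformly in $t$ by \ref{N6}. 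For the second, the operator-norm estimate from \eqref{LMtoL} gives
\begin{equation*}
\bigl|[\mathfrak{L}_{M}^{\ast}(t)-\mathfrak{L}^{\ast}(t)]\,{v^{\ast}_{t}}'\circ s_{\omega^{\ast}}\cdot s_{\omega^{\ast}}\bigr|\leq\|\mathfrak{L}_{M}^{\ast}-\mathfrak{L}^{\ast}\|_{\mathcal{L}(\mathtt{Y},\mathbb{R}^{d})\leftarrow[0,1]}\cdot\bigl\|{v^{\ast}_{t}}'\circ s_{\omega^{\ast}}\cdot s_{\omega^{\ast}}\bigr\|_{\mathtt{Y}},
\end{equation*}
and I would observe that the last factor is bounded uniformly in $t\in[0,1]$: by \ref{T2} we have ${v^{\ast}}'\in B^{\infty}([-1,1],\mathbb{R}^{d})$, so $\|{v^{\ast}_{t}}'\circ s_{\omega^{\ast}}\|_{\mathtt{Y}}\leq\|{v^{\ast}}'\|_{\infty}$ for all $t\in[0,1]$, while $\|s_{\omega^{\ast}}\|_{\infty}=\tau/\omega^{\ast}$ is a constant. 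Taking the supremum over $t$ and using the already established \eqref{LMtoL} delivers \eqref{MMtoM}.

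There is no real obstacle: the lemma is essentially a bookkeeping consequence of \ref{N6}--\ref{N7} together with the fact that $v^{\ast}$ has bounded derivative and $s_{\omega^{\ast}}$ acts only as a fixed, bounded rescaling of the delay variable. The only point worth being explicit about is that $\omega^{\ast}>0$ is bounded away from zero and fixed throughout, so no uniformity issue in $\omega$ arises.
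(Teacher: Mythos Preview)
Your proof is correct and follows exactly the approach implied by the paper, which simply states ``Straightforward from the definitions.'' You have merely made explicit the two-line computation the authors left implicit.
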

\begin{proof}
Straightforward from the definitions.
\end{proof}
\begin{proposition}\label{p_Ax*2LM-Ax*2}
Let $\omega$ and $\omega_{L,M}$ be given as in \eqref{omega} and \eqref{omegaLM}, respectively. Then, under \ref{T4}, \ref{N4}, \ref{N6} and \ref{N7},
$$
\lim_{L,M\rightarrow\infty}\omega_{L,M}=\omega.
$$
\end{proposition}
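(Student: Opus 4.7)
The plan is to pass to the limit in the explicit formula $\omega_{L,M}=-(k_{L,M,2}+h_{L,M})/k_{L,M,1}$ from \eqref{omegaLM} by proving term-wise convergence of all the ingredients of the decomposition \eqref{xinuLM} to their counterparts in \eqref{xi1}, together with $\omega=-k_{2}/k_{1}$ from \eqref{omega}. In particular, four convergences must be combined:
\begin{enumerate}[label=(\roman*)]
\item $\xi_{L,M,1}^{\ast}\to\xi_{1}^{\ast}$ and $\xi_{L,M,2}^{\ast}\to\xi_{2}^{\ast}$ in $Y$;
\item $\nu_{L,M}\to 0$ in $Y$, where $\nu_{L,M}$ is defined in \eqref{nuLM};
\item the spectral projectors $Q_{L,M}:Y\to K_{L,M}$ onto $K_{L,M}=\Span\{\varphi_{L,M}\}$ along $R_{L,M}$ converge in operator norm to $Q:Y\to K$ onto $K=\Span\{\varphi\}$ along $R$;
\item the nonvanishing of $k_{1}$ proved in Section \ref{s_nongeneric}.
\end{enumerate}

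First I would establish (i). Splitting $\xi_{L,M,1}^{\ast}-\xi_{1}^{\ast}$ into the four contributions
\begin{equation*}
\pi_{L}^{-}\rho_{L}^{-}\!\!\int_{0}^{1}\!\![T_{L,M}^{\ast}(1,s)X_{0}]\pi_{L}^{+}\rho_{L}^{+}(\mathfrak{M}_{M}^{\ast}-\mathfrak{M}^{\ast})(s)\dd s,\qquad\pi_{L}^{-}\rho_{L}^{-}\!\!\int_{0}^{1}\!\![T_{L,M}^{\ast}(1,s)X_{0}](\pi_{L}^{+}\rho_{L}^{+}-I_{\mathbb{U}})\mathfrak{M}^{\ast}(s)\dd s,
\end{equation*}
\begin{equation*}
\pi_{L}^{-}\rho_{L}^{-}\!\!\int_{0}^{1}\!\![(T_{L,M}^{\ast}(1,s)-T^{\ast}(1,s))X_{0}]\mathfrak{M}^{\ast}(s)\dd s,\qquad(\pi_{L}^{-}\rho_{L}^{-}-I_{\mathbb{A}})\!\!\int_{0}^{1}\!\![T^{\ast}(1,s)X_{0}]\mathfrak{M}^{\ast}(s)\dd s,
\end{equation*}
each vanishes: the first by \eqref{MMtoM} combined with the uniform bound \eqref{p-r-} on $\pi_{L}^{-}\rho_{L}^{-}$; the second by \eqref{p+r+-I} since $\mathfrak{M}^{\ast}$ is continuous under \ref{T4}; the third by \eqref{TLMtoT}; and the fourth by \eqref{p-r--I} since the last integral equals $v^{\ast}_{1}$ of the linear inhomogeneous state equation, which is continuously differentiable under \ref{T5}. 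The same scheme handles $\xi_{L,M,2}^{\ast}\to\xi_{2}^{\ast}$ after replacing $\mathfrak{M}^{\ast}$ with $u_{0}$ (here the continuity of $u_{0}$ is not needed since the bound on the primary discretization of $\mathcal{G}(u_{0},\psi_{0})_{1}$ is uniform, yielding \eqref{err_u0}). For (ii), $\nu_{L,M}$ vanishes using $\mu_{L,M}\to 1$ from \eqref{muLMphiLM} and the fact that $T_{L,M}^{\ast}(1,0)\psi_{L,M}=\mathcal{G}(u_{L,M},\psi_{L,M})_{1}$ is continuous, so that $(\pi_{L}^{-}\rho_{L}^{-}-I_{\mathbb{A}})$ applied to it vanishes by \eqref{p-r--I}, provided $\psi_{L,M}$ is bounded, which is the secondary claim.

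Next I would handle (iii) via standard spectral approximation arguments: since $\mu_{L,M}\to 1$ is a simple eigenvalue isolated in $\overline{B}(1,r)$ for $L,M$ large by Lemma \ref{l_TLMtoT}, the associated spectral projectors converge in operator norm, and in particular $Q_{L,M}\xi\to Q\xi$ for every $\xi\in Y$. Applying $Q_{L,M}$ to the first of \eqref{xinuLM} gives $k_{L,M,1}\varphi_{L,M}=Q_{L,M}\xi_{L,M,1}^{\ast}\to Q\xi_{1}^{\ast}=k_{1}\varphi$; combined with $\varphi_{L,M}\to\varphi$ from \eqref{muLMphiLM} and $\|\varphi_{L,M}\|_{Y}=\|\varphi\|_{Y}=1$, this yields $k_{L,M,1}\to k_{1}$ (which I would denote \eqref{k1LMtok1} for reference in Proposition \ref{p_Ax*2LM}). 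The same argument gives $k_{L,M,2}\to k_{2}$ and $h_{L,M}\to 0$. Because $k_{1}\neq 0$ by the nongeneric analysis of Section \ref{s_nongeneric}, the quotient in \eqref{omegaLM} is well-defined for $L,M$ large and converges to $-k_{2}/k_{1}=\omega$.

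For the secondary assertion on boundedness of $\psi_{L,M}$ needed in Lemma \ref{l_DPsiLMinv}, I would write $\psi_{L,M}=\eta_{L,M}+\lambda_{L,M}\varphi_{L,M}$, where $\eta_{L,M}$ solves the range equation associated to \eqref{Ax*23LM} and is therefore bounded uniformly in $L,M$ by the uniform invertibility of $\mu_{L,M}I_{Y}-T_{L,M}^{\ast}(1,0)$ restricted to $R_{L,M}$, which follows from the operator-norm convergence in Lemma \ref{l_TLMtoT} and standard Banach perturbation. The scalar $\lambda_{L,M}$ is fixed by the phase condition and is bounded because $p(v(\cdot;\varphi_{L,M})|_{[0,1]})\to p(v(\cdot;\varphi)|_{[0,1]})\neq 0$. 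The main obstacle in the whole plan is the argument at the second addend in (i) and in the analogous step for (iii): the operator $\pi_{L}^{-}\rho_{L}^{-}$ is unbounded with respect to $m$ in the $\mathbb{A}$-norm (cf.\ \eqref{p-r-}), so to turn pointwise convergence into the operator-norm convergence needed in the FEM one must carefully exploit that the targets to which $\pi_{L}^{-}\rho_{L}^{-}$ is applied are continuously differentiable, a regularity that is inherited from \ref{T4}, \ref{T5} and from the smoothing effect of the evolution family $T^{\ast}(\cdot,\cdot)$.
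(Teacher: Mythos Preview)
Your overall plan matches the paper's: prove $\xi_{L,M,i}^{\ast}\to\xi_{i}^{\ast}$, $\nu_{L,M}\to0$, extract $k_{L,M,i}\to k_{i}$ and $h_{L,M}\to0$ via eigenprojector convergence, then take the quotient. The four-term splitting you give for $\xi_{L,M,1}^{\ast}-\xi_{1}^{\ast}$ is a harmless reordering of the paper's decomposition \eqref{xiLM1toxi1'}, and using the spectral projectors $Q_{L,M}\to Q$ to pass from $\xi_{L,M,i}^{\ast}\to\xi_{i}^{\ast}$ to $k_{L,M,i}\to k_{i}$ is a cleaner justification of what the paper does in one line via \eqref{muLMphiLM}. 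Two points, however, are not handled correctly.

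\medskip
\textbf{The case $\xi_{L,M,2}^{\ast}\to\xi_{2}^{\ast}$.} Your claim that ``the continuity of $u_{0}$ is not needed'' is exactly where the argument breaks. In the analogue of your fourth term one must show that $(\pi_{L}^{-}\rho_{L}^{-}-I_{\mathbb{A}})\int_{0}^{1}[T^{\ast}(1,s)X_{0}]u_{0}(s)\dd s\to0$ in $\mathbb{A}$. This integral is $w_{1}$ for $w$ the solution of $w'(t)=\mathfrak{L}^{\ast}(t)w_{t}\circ s_{\omega^{\ast}}+u_{0}(t)$, $w_{0}=0$, and since $u_{0}\in\mathbb{U}=B^{\infty}$ may be discontinuous, $w_{1}$ is only $B^{1,\infty}$, not $C^{1}$; Lemma~\ref{l_p-r--I} therefore does not apply. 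The paper's remedy is precisely the reformulation \eqref{Ax*23}--\eqref{Ax*24LM}: in the transformed system the forcing becomes $\mathfrak{L}^{\ast}\mathcal{G}(u_{0},\psi_{0})_{\cdot}\circ s_{\omega^{\ast}}$, which \emph{is} continuous under \ref{T4}, and then the critical term takes the form \eqref{err_u0} to which \eqref{p-r--I} applies. You cite \eqref{err_u0} but your sentence does not actually invoke this substitution; as written, the step fails. (Minor: for the fourth term of $\xi_{L,M,1}^{\ast}$ you cite \ref{T5}, which is not among the hypotheses; \ref{T4} suffices, as in the paper.)

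\medskip
\textbf{Boundedness of $\psi_{L,M}$.} Your route is circular. You bound $\eta_{L,M}$ by applying the uniform inverse of $(\mu_{L,M}I_{Y}-T_{L,M}^{\ast}(1,0))|_{R_{L,M}}$ to the right-hand side of \eqref{Ax*23LM}, but that right-hand side is $\omega_{L,M}\xi_{L,M,1}^{\ast}+\xi_{L,M,2}^{\ast}+\nu_{L,M}$, and both $\omega_{L,M}$ (via $h_{L,M}$) and $\nu_{L,M}$ depend on $\psi_{L,M}$ through \eqref{nuLM} and \eqref{omegaLM}. So you are assuming what you want to prove. The paper breaks the loop by an independent estimate: from the second line of \eqref{Ax*20LM} one has directly
\[
\|\psi_{L,M}\|_{\mathbb{A}}\leq(\Lambda_{m}+\Lambda_{m}')\,\|\mathcal{G}(u_{L,M},\psi_{L,M})_{1}\|_{\infty}+\|\psi_{0}\|_{\mathbb{A}},
\]
and $\|\mathcal{G}(u_{L,M},\psi_{L,M})\vert_{[0,1]}\|_{\infty}$ is controlled through the phase condition (third line of \eqref{Ax*20LM}); this bound does not involve $\omega_{L,M}$ or $\nu_{L,M}$ at all. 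If you wish to keep your $\eta_{L,M}+\lambda_{L,M}\varphi_{L,M}$ route, you would need to first prove $\|\nu_{L,M}\|_{Y}\leq\varepsilon_{L,M}\|\psi_{L,M}\|_{Y}$ with $\varepsilon_{L,M}\to0$ in \emph{operator norm} (via the paper's splitting $(\pi_{L}^{-}\rho_{L}^{-}-I_{\mathbb{A}})T_{L,M}^{\ast}(1,0)=(\pi_{L}^{-}\rho_{L}^{-}-I_{\mathbb{A}})[T_{L,M}^{\ast}(1,0)-T^{\ast}(1,0)]+(\pi_{L}^{-}\rho_{L}^{-}-I_{\mathbb{A}})T^{\ast}(1,0)$) and then close with an absorption argument; none of this is in your proposal.
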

\begin{proof}
Let $\xi_{1}^{\ast}$ and $\xi_{2}^{\ast}$ be
as in the proof of Proposition \ref{p_Ax*2} and $\xi_{L,M,1}^{\ast}$, $\xi_{L,M,2}^{\ast}$ and $\nu_{L,M}$ be as in \eqref{xinuLM}. First we show that
\begin{equation}\label{xiLM1toxi1}
\lim_{L,M\rightarrow\infty}\|\xi_{L,M,1}^{\ast}-\xi_{1}^{\ast}\|_{Y}=0
\end{equation}
for $\xi_{1}^{\ast}$ in \eqref{xi1}. From the latter and the first of \eqref{xiLM} we have
\begin{equation}\label{xiLM1toxi1'}
\setlength\arraycolsep{0.1em}\begin{array}{rcl}
\xi_{L,M,1}^{\ast}-\xi_{1}^{\ast}&=&\displaystyle\pi_{L}^{-}\rho_{L}^{-}\int_0^{1}[T_{L,M}^{\ast}(1,s)X_{0}]\pi_{L}^{+}\rho_{L}^{+}\mathfrak{M}_{M}^{\ast}(s)\dd s\\[4mm]
&&\displaystyle-\int_0^{1}[T^{\ast}(1,s)X_{0}]\mathfrak{M}^{\ast}(s)\dd s\\[4mm]
&=&\displaystyle\pi_{L}^{-}\rho_{L}^{-}\int_0^{1}[(T_{L,M}^{\ast}(1,s)-T^{\ast}(1,s))X_{0}]\pi_{L}^{+}\rho_{L}^{+}\mathfrak{M}_{M}^{\ast}(s)\dd s\\[4mm]
&&+\displaystyle\pi_{L}^{-}\rho_{L}^{-}\int_0^{1}[T^{\ast}(1,s)X_{0}][\pi_{L}^{+}\rho_{L}^{+}-I_{\mathbb{U}}]\mathfrak{M}_{M}^{\ast}(s)\dd s\\[4mm]
&&+\displaystyle\pi_{L}^{-}\rho_{L}^{-}\int_0^{1}[T^{\ast}(1,s)X_{0}](\mathfrak{M}_{M}^{\ast}(s)-\mathfrak{M}^{\ast}(s))\dd s\\[4mm]
&&+\displaystyle(\pi_{L}^{-}\rho_{L}^{-}-I_{\mathbb{A}})\int_0^{1}[T^{\ast}(1,s)X_{0}]\mathfrak{M}^{\ast}(s)\dd s.
\end{array}
\end{equation}
The third addend in the right-hand side of the last equality above vanishes thanks to \eqref{p-r-} of Lemma \ref{l_p-r--I} and \eqref{MMtoM} of Lemma \ref{l_prLMMtoLM}. The latter implies also that $\mathfrak{M}_{M}^{\ast}$ is uniformly bounded, so that the first addend vanishes as well thanks to Lemma \ref{l_TLMtoT}, \eqref{p+r+} of Lemma \ref{l_p+r+-I} and \eqref{p-r-} of Lemma \ref{l_p-r--I}. The second addend vanishes similarly thanks to \eqref{p+r+-I} of Lemma \ref{l_p+r+-I} since $\mathfrak{M}_{M}^{\ast}$ is continuous under \ref{N4} as it follows from \eqref{M2}, the second of \eqref{L*M*} and thanks to Lemma \ref{l_v*'}. Finally, as for the last addend, note that $\int_0^{1}[T^{\ast}(1,s)X_{0}]\mathfrak{M}^{\ast}(s)\dd s$ is the state solution at $1$ of
\begin{equation}\label{ivpvLM*}
\left\{\setlength\arraycolsep{0.1em}\begin{array}{l}
v'(t)=\mathfrak{L}^{\ast}(t)v_{t}\circ s_{\omega^{\ast}}+\mathfrak{M}^{\ast}(t),\quad t\in[0,1],\\[2mm]
v_{0}=0,
\end{array}
\right.
\end{equation}
as it can be seen by applying the variation of constants formula as done for \eqref{vLM}. As such it is continuously differentiable, being the right-hand side of the RFDE continuous under \ref{T4} similarly as already observed above for $\mathfrak{M}_{M}^{\ast}$. Therefore, also this addend vanishes thanks to \eqref{p-r--I} of Lemma \ref{l_p-r--I}. Since \eqref{xiLM1toxi1} holds, the second of \eqref{muLMphiLM} in Lemma \ref{l_TLMtoT} gives also
\begin{equation}\label{k1LMtok1}
\lim_{L,M\rightarrow\infty}k_{L,M,1}=k_{1}.
\end{equation}
Note that the definition of $\xi_1^*$ does not change if one considers system \eqref{Ax*23} in place of \eqref{Ax*20}. Similarly, the definition of $\xi_{1,L,M}$ does not change if one considers system \eqref{Ax*24LM} in place of \eqref{Ax*20LM}.

Now, by trying a similar reasoning to prove that
\begin{equation}\label{xiLM2toxi2}
\lim_{L,M\rightarrow\infty}\|\xi_{L,M,2}^{\ast}-\xi_{2}^{\ast}\|_{Y}=0,
\end{equation}
one ends up with the term $(\pi_{L}^{-}\rho_{L}^{-}-I_{\mathbb{A}})\int_0^{1}[T^{\ast}(1,s)X_{0}]u_{0}(s)\dd s$ in place of the fourth addend analyzed above, where $\int_0^{1}[T^{\ast}(1,s)X_{0}]u_{0}(s)\dd s$ is the state solution at $1$ of
\begin{equation*}
\left\{\setlength\arraycolsep{0.1em}\begin{array}{l}
v'(t)=\mathfrak{L}^{\ast}(t)v_{t}\circ s_{\omega^{\ast}}+u_{0}(t),\quad t\in[0,1],\\[2mm]
v_{0}=0.
\end{array}
\right.
\end{equation*}
Unfortunately, the latter is not necessarily continuously differentiable since $u_{0}\in\mathbb{U}$ is not necessarily continuous, so that \eqref{p-r--I} of Lemma \ref{l_p-r--I} cannot be applied. Nevertheless, it is not difficult to argue that the proof of \eqref{xiLM2toxi2} can be accomplished by using the same arguments adopted in the proof of Lemma \ref{l_DPsiLMinv}, i.e., by repeating the proof of Proposition \ref{p_Ax*2} for \eqref{Ax*23} in place of \eqref{Ax*20}, as well as the proof of Proposition \ref{p_Ax*2LM} for \eqref{Ax*24LM} in place of \eqref{Ax*20LM}. Going this way would give rise to the term
\begin{equation}\label{err_u0}
(\pi_{L}^{-}\rho_{L}^{-}-I_{\mathbb{A}})\int_0^{1}[T^{\ast}(1,s)X_{0}](\mathfrak{L}^{\ast}\mathcal{G}(u_{0},\psi_{0})_{\cdot}\circ s_{\omega^{\ast}})(s)\dd s,
\end{equation}
where now $\mathfrak{L}^{\ast}\mathcal{G}(u_{0},\psi_{0})_{\cdot}\circ s_{\omega^{\ast}}$ is continuous under \ref{T4}. We omit the details since they would not add any novelty to what already elaborated in the proofs mentioned above. Obviously, \eqref{xiLM2toxi2} guarantees also that
\begin{equation}\label{k2LMtok2}
\lim_{L,M\rightarrow\infty}k_{L,M,2}=k_{2}
\end{equation}
thanks to \eqref{muLMphiLM} in Lemma \ref{l_TLMtoT}.

Eventually,
\begin{equation}\label{hLMto0}
\lim_{L,M\rightarrow\infty}h_{L,M}=0
\end{equation}
follows since in the third of \eqref{xinuLM} $\varphi_{L,M}$ converges to $\varphi$ thanks to Lemma \ref{l_TLMtoT} again and $\nu_{L,M}$ in \eqref{nuLM} vanishes. The latter statement is a consequence of $\psi_{L,M}$ being bounded (see below), $\mu_{L,M}\rightarrow1$ from Lemma \ref{l_TLMtoT} and that $(\pi_{L}^{-}\rho_{L}^{-}-I_{\mathbb{A}})T_{L,M}^{\ast}(1,0)$ vanishes since
\begin{equation*}
(\pi_{L}^{-}\rho_{L}^{-}-I_{\mathbb{A}})T_{L,M}^{\ast}(1,0)=(\pi_{L}^{-}\rho_{L}^{-}-I_{\mathbb{A}})[T_{L,M}^{\ast}(1,0)-T^{\ast}(1,0)]+(\pi_{L}^{-}\rho_{L}^{-}-I_{\mathbb{A}})T^{\ast}(1,0).
\end{equation*}
Indeed, the right-hand side above vanishes under \ref{N2} thanks to \eqref{p-r--I} of Lemma \ref{l_p-r--I}, Lemma \ref{l_TLMtoT} again and to the fact that the range of $T^{\ast}(1,0)$ contains only continuously differentiable functions. Finally, that $\psi_{L,M}$ is bounded follows from
\begin{equation}\label{psiLMbounded}
\|\psi_{L,M}\|_{\mathbb{A}}\leq(\Lambda_{m}+\Lambda_{m}')\|\mathcal{G}(u_{L,M},\psi_{L,M})_{1}\|_{\infty}+\|\psi_{0}\|_{\mathbb{A}},
\end{equation}
which holds from the second of \eqref{Ax*20LM}, and where boundedness of $\|\mathcal{G}(u_{L,M},\psi_{L,M})_{1}\|_{\infty}$ $=\|\mathcal{G}(u_{L,M},\psi_{L,M})\vert_{[0,1]}\|_{\infty}$ follows from the third of \eqref{Ax*20LM} and the continuity of $p$. 

\bigskip
In conclusion, by \eqref{k1LMtok1}, \eqref{k2LMtok2} and \eqref{hLMto0},
\[
\setlength\arraycolsep{0.1em}\begin{array}{rcl}
\displaystyle\lim_{L,M\to\infty}(\omega_{L,M}-\omega)&=&\displaystyle\lim_{L,M\to\infty}\left(-\frac{k_{L,M,2}+h_{L,M}}{k_{L,M,1}}+\frac{k_{2}}{k_{1}}\right)\\[4mm]
&=&\displaystyle\lim_{L,M\to\infty}\frac{-k_{L,M,2}k_{1}-h_{L,M}k_{1}+k_{2}k_{L,M,1}}{k_{L,M,1}k_{1}}\\[4mm]
&=&\displaystyle\frac{-k_{2}k_{1}-0\cdot k_{1}+k_{2}k_{1}}{k_{1}k_{1}}=0.
\end{array}
\]
\end{proof}
\begin{lemma}\label{l_prG1toG}
Let $\rho_{L}^{-}$ and $\pi_{L}^{-}$ be defined respectively in \eqref{rL-} and \eqref{pL-} under \ref{N2} and $\mathcal{G}_{1}^{+}$ and $\mathcal{G}_{1}^{+}$ be defined in \eqref{G1+-}. Then, under \ref{T2},
\begin{equation}\label{pr-G1+toG1+}
\lim_{L,M\rightarrow\infty}\|\pi_{L}^{-}\rho_{L}^{-}\mathcal{G}_{1}^{+}-\mathcal{G}_{1}^{+}\|_{\mathbb{A}\leftarrow C^{+}}=0
\end{equation}
and
\begin{equation}\label{pr-G1-toG1}
\lim_{L,M\rightarrow\infty}\|\pi_{L}^{-}\rho_{L}^{-}\mathcal{G}_{1}^{-}-\mathcal{G}_{1}^{-}\|_{\mathbb{A}\leftarrow\mathbb{A}}=0.
\end{equation}
\end{lemma}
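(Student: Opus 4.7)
The plan is to treat the two component limits separately, as they have rather different character.

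For \eqref{pr-G1-toG1}, I observe that $(\mathcal{G}_{1}^{-}\psi)(t)=\psi(0)$ is constant in $t\in[-1,0]$, so the piecewise Lagrange interpolant $\pi_{L}^{-}\rho_{L}^{-}\mathcal{G}_{1}^{-}\psi$ agrees with $\mathcal{G}_{1}^{-}\psi$ identically (since $\sum_{j=0}^{m}\ell_{m,i,j}\equiv 1$ reproduces constants on every subinterval). Hence the difference operator is literally zero for every $L$, and the limit is trivial.

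For \eqref{pr-G1+toG1+}, I fix $e^{+}\in C^{+}$ and set $f:=\mathcal{G}_{1}^{+}e^{+}$, noting that $f\in C^{1}([-1,0],\mathbb{R}^{d})$ with $f(t)=\int_{0}^{1+t}e^{+}$, $\|f\|_{\infty}\leq\|e^{+}\|_{\infty}$, and $f'(t)=e^{+}(1+t)$ so $\|f'\|_{\infty}\leq\|e^{+}\|_{\infty}$. The $\mathbb{A}$-norm of the interpolation error splits into a sup-norm and a derivative-sup-norm piece according to \eqref{normB1inf}. The sup-norm piece is controlled by the Lipschitz estimate $\|\pi_{L}^{-}\rho_{L}^{-}f-f\|_{\infty}\leq\Lambda_{m}\omega(f;h)\leq\Lambda_{m}h\|e^{+}\|_{\infty}$, as in the proof of Lemma \ref{l_p+r+-I}, giving $O(h)$ operator-norm convergence. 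The derivative piece is handled via the local identity
\[
(\pi_{L}^{-}\rho_{L}^{-}f-f)'(t)=\sum_{j=0}^{m}\ell'_{m,i,j}(t)\int_{t}^{t_{i,j}^{-}}[f'(s)-f'(t)]\dd s,\qquad t\in[t_{i-1}^{-},t_{i}^{-}],
\]
obtained by combining $(\pi_{L}^{-}\rho_{L}^{-}f)'(t)=\sum_{j}\ell'_{m,i,j}(t)f(t_{i,j}^{-})$ with the two polynomial identities $\sum_{j}\ell'_{m,i,j}(t)\equiv 0$ and $\sum_{j}\ell'_{m,i,j}(t)(t_{i,j}^{-}-t)\equiv 1$ (derivatives of the Lagrange reproduction of the constant function $1$ and of the identity map, respectively). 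The standard scaling $|\ell'_{m,i,j}(t)|=h^{-1}|\ell'_{m,j}((t-t_{i-1}^{-})/h)|$ then yields $\|(\pi_{L}^{-}\rho_{L}^{-}f-f)'\|_{\infty}\leq\Lambda'_{m}\omega(f';h)=\Lambda'_{m}\omega(e^{+};h)$.

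The step I expect to be the main obstacle is the passage from the pointwise convergence $\Lambda'_{m}\omega(e^{+};h)\to 0$ (guaranteed for each fixed $e^{+}\in C^{+}$ by uniform continuity on the compact interval) to the uniform convergence demanded by the operator norm, since an arbitrary element of the unit ball of $C^{+}$ can exhibit arbitrarily large moduli of continuity on the scale $h$. Resolving this likely requires either a refined argument that exploits the specific way the operator enters the Banach perturbation of Lemma \ref{l_DPsiLMinv} (a compactness/density argument on the class of $e^{+}$ actually arising as a collocation residual, writing $e^{+}$ as the sum of a Lipschitz piece plus a sup-norm-small piece), or a reinterpretation of \eqref{pr-G1+toG1+} as a strong rather than uniform limit, leveraging the fact that $\mathcal{G}_{1}^{-}$ contributes no perturbation at all and the $\mathcal{K}_{M}^{\ast,\pm}$ blocks converge in norm by Lemma \ref{l_prKMtoK}. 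This subtlety is closely related to the finite-versus-spectral issues that the authors flag in Section \ref{s_SEM}.
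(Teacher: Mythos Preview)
Your treatment of \eqref{pr-G1-toG1} is correct and coincides with the paper's: the range of $\mathcal{G}_{1}^{-}$ consists of constants, which are reproduced exactly by the piecewise Lagrange interpolant, so the operator difference vanishes identically.

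For \eqref{pr-G1+toG1+} the paper's proof is a one-line appeal to \eqref{p-r--I} of Lemma~\ref{l_p-r--I} together with the observation that $\mathcal{G}_{1}^{+}C^{+}\subset C^{1}([-1,0],\mathbb{R}^{d})$. You have gone further and correctly noticed that \eqref{p-r--I} delivers only strong convergence (pointwise in $\psi$), whereas \eqref{pr-G1+toG1+} asserts convergence in operator norm. Your derivative bound $\|(\pi_{L}^{-}\rho_{L}^{-}f-f)'\|_{\infty}\leq\Lambda'_{m}\,\omega(e^{+};h)$ is valid, and the uniformity concern you raise is not a mere technicality: the operator-norm limit as literally stated is \emph{false}. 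Take $e^{+}_{n}(s)=\cos(2\pi n s)$, so that $\|e^{+}_{n}\|_{C^{+}}=1$ and $f_{n}:=\mathcal{G}_{1}^{+}e^{+}_{n}$ satisfies $\|f_{n}\|_{\infty}=(2\pi n)^{-1}$ while $f_{n}'(0)=1$. For fixed $L$ and $n\gg\Lambda'_{m}L$ one gets $\|(\pi_{L}^{-}\rho_{L}^{-}f_{n})'\|_{\infty}\leq\Lambda'_{m}L/(2\pi n)<1/2$, hence $\|(\pi_{L}^{-}\rho_{L}^{-}\mathcal{G}_{1}^{+}-\mathcal{G}_{1}^{+})e^{+}_{n}\|_{\mathbb{A}}\geq 1/2$. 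Thus $\|\pi_{L}^{-}\rho_{L}^{-}\mathcal{G}_{1}^{+}-\mathcal{G}_{1}^{+}\|_{\mathbb{A}\leftarrow C^{+}}\geq 1/2$ for every $L$.

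So you have identified a genuine gap in the paper itself, not merely in your own argument. The Banach-perturbation step in Lemma~\ref{l_DPsiLMinv} that relies on norm convergence of this block needs a different justification; the directions you suggest---restricting attention to the specific residuals $e_{L,M}^{+}$ actually produced by the scheme, or replacing the naive perturbation argument by a collectively-compact or strong-convergence framework for the full block operator---are the natural repairs.
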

\begin{proof}
\eqref{pr-G1+toG1+} follows from \eqref{p-r--I} of Lemma \ref{l_p-r--I} and the fact that $\mathcal{G}_{1}^{+}C^{+}$ contains only continuously differentiable functions by the first of \eqref{G1+-}. Similarly, \eqref{pr-G1-toG1} follows from the fact that $\mathcal{G}_{1}^{-}\mathbb{A}$ contains only constant functions by the second of \eqref{G1+-}.
\end{proof}
\section*{Acknowledgments}
The authors wish to express their sincere gratitude to Stefano Maset (University of Trieste) for a continuous discussion on the subject, to Duccio Papini and Fabio Zanolin (University of Udine) for their precious advices on duality and Fredholm theory and to Rossana Vermiglio (University of Udine) for a careful reading of the manuscript.

\end{document}